\documentclass[11pt]{amsart}
\usepackage[margin=1in]{geometry}
\usepackage{amsmath,amssymb,amsthm,mathtools,bm}
\usepackage{enumitem}
\usepackage{booktabs}
\usepackage{hyperref}
\usepackage[nameinlink,capitalize]{cleveref}
\usepackage[dvipsnames]{xcolor}
\usepackage{cancel}
\usepackage{mathrsfs}
\usepackage{algpseudocode,algorithm,algorithmicx}
\usepackage{tikz}
\usepackage{graphicx}
\usepackage{subcaption}
\usepackage[T1]{fontenc}
\usepackage[scaled=0.95]{helvet}
\usetikzlibrary{arrows.meta,calc,positioning,patterns,decorations.pathreplacing}
\usepackage{pgfplots}
\pgfplotsset{compat=1.18}

\hypersetup{colorlinks=true,linkcolor=blue,citecolor=blue,urlcolor=blue}

\newtheorem{theorem}{Theorem}[section]
\newtheorem{corollary}[theorem]{Corollary}
\newtheorem{proposition}[theorem]{Proposition}
\newtheorem{lemma}[theorem]{Lemma}
\newtheorem{definition}[theorem]{Definition}
\newtheorem{remark}[theorem]{Remark}

\newcommand{\commentout}[1]{}

\newcommand{\R}{\mathbb R}
\newcommand{\C}{\mathbb C}

\newcommand{\CC}{\mathbb C}

\newcommand{\Id}{I}

\newcommand{\bmC}{\mathbb{C}}
\newcommand{\bmS}{\mathbb{S}}
\newcommand{\cH}{\mathcal{H}}
\newcommand{\cU}{\mathcal{U}}

\newcommand{\dd}{\,d}

\newcommand{\Sph}{\mathbb S^2}

\newcommand{\diag}{\operatorname{diag}}
\newcommand{\sinc}{\operatorname{sinc}}
\newcommand{\Proj}{P}
\newcommand{\epssub}{\varepsilon_{\rm sub}}
\newcommand{\norm}[1]{\left\|#1\right\|_2}
\newcommand{\abs}[1]{\left|#1\right|}
\newcommand{\inner}[2]{\left\langle #1,#2\right\rangle}

\newcommand{\bd}{{\bm d}}
\newcommand{\bg}{{\bm g}}
\newcommand{\bu}{{\bm u}}

\newcommand{\bx}{{\bm x}}
\newcommand{\by}{{\bm y}}
\newcommand{\bz}{{\bm z}}
\newcommand{\bh}{{\bm h}}
\newcommand{\bv}{{\bm v}}
\newcommand{\bw}{{\bm w}}

\newcommand{\bom}{\bm\omega}

\newcommand{\eps}{\varepsilon}

\newcommand{\ip}[2]{\left\langle #1,#2\right\rangle}
\newcommand{\mesh}{{\rm mesh}}
\newcommand{\ii}{\mathrm i}

\newcommand{\beq}{\begin{eqnarray}}
\newcommand{\eeq}{\end{eqnarray}}
\newcommand{\beqn}{\begin{eqnarray*}}
\newcommand{\eeqn}{\end{eqnarray*}}

\hypersetup{
  pdftitle={Certified Spherical MUSIC for 3D Localization under
    Adversarial Subspace Perturbations},
  pdfauthor={Albert Fannjiang and Chuong Nguyen},
  pdfsubject={Inverse scattering, MUSIC, localization, and
    subspace perturbations},
  pdfkeywords={MUSIC, inverse scattering, point scatterers,
    localization, subspace perturbation}
}

\title[Certified spherical MUSIC]
  {Certified Spherical MUSIC for 3D Localization
   under Adversarial Subspace Perturbations}
\author{Albert Fannjiang}
\author{Chuong Nguyen}
\date{September 4, 2026}
\email{cafannjiang@ucdavis.edu}
\address{Department of Mathematics, UC Davis, CA 95616}

\begin{document}
	\begin{abstract}
	We study an oracle subspace-perturbation model for 3D localization. The procedure is given an $s$-dimensional subspace $\widetilde{\mathcal{U}}$ and a deterministic error bound
\[
\varepsilon_{\mathrm{sub}}
=
\left\|P_{\widetilde{\mathcal{U}}}-P_{\mathcal{U}}\right\|_2,
\]
where $\mathcal{U}$ is an $s$-dimensional subspace of far-field patterns with wavenumber $\kappa$, and $P_{\mathcal{V}}$ denotes orthogonal projection onto a subspace $\mathcal{V}$.

Under explicit separation and conditioning hypotheses for arbitrary point clouds, we prove that the perturbed spherical MUSIC objective
\[
\widetilde{q}(\mathbf{z})
=
1-\left\|P_{\widetilde{\mathcal{U}}}\varphi_{\mathbf{z}}\right\|_2^2
\]
has a unique strongly convex well in every ball $B_{\gamma/\kappa}(x_j)$ and a uniform value gap outside the union of the certified wells. A fixed-step gradient map with $h\asymp\kappa^{-2}$ leaves each well invariant and converges linearly to its unique minimizer. Consequently, thresholding on an $O(\kappa^{-1})$-mesh, followed by gradient descent and duplicate removal, recovers all relevant minima with localization bound
\[
\mathfrak{R}(\varepsilon_{\mathrm{sub}})
\lesssim
\frac{\varepsilon_{\mathrm{sub}}}{\kappa}.
\]

The arbitrary-cloud frame analysis yields the sufficient condition
\[
\kappa\delta_X\gtrsim s^{2/3}
\]
through an absolute coherence row sum and Gershgorin's theorem. We construct lower-frame counterexamples below the $s^{1/6}$ scale, upper-frame counterexamples below the $s^{1/3}$ scale, and examples showing that $2/3$ is optimal for the absolute-row-sum argument. Finally, for parameter classes containing a uniformly admissible one-point displacement path, we prove
\[
\mathfrak{R}(\varepsilon_{\mathrm{sub}})
\sim
\frac{\varepsilon_{\mathrm{sub}}}{\kappa}.
\]
\end{abstract}

		\maketitle

\tableofcontents

	\section{Introduction}
	
	Inverse scattering couples two problems that are mathematically
different.  The first is an acquisition problem: an incident field
interacts with an unknown object, the resulting scattered field must be
separated from the direct or background field, and a stable signal
subspace must be estimated from finite and noisy measurements.  The
second is a geometric inversion problem: once that subspace is
available, one must recover the locations of the scatterers from it.
The purpose of this paper is to isolate the second problem and give a
quantitative, algorithmic theory for spherical MUSIC.  The interface
between the two problems is a deterministic projector-perturbation
bound.

	Although much of inverse  scattering is formulated for extended obstacles or continuously distributed contrasts \cite{Cakoni,CK,kirsch02,Melrose}, the Foldy--Lax point-scatterer model is more than a simplifying surrogate: it retains nontrivial multiple-scattering interactions through the coupled effective coefficients while reducing the unknown geometry to a finite set of locations \cite{foldy45,lax51}.  This makes it a particularly transparent setting for quantitative stability analysis.
	
	 A foundational strand of inverse problem theory asks whether exact, noiseless data determine the unknown uniquely; under perturbations, however, uniqueness alone gives no control of the resulting reconstruction error \cite{nachman88,novikov94,uhlmann87,Melrose}. In the oracle subspace formulation adopted here, the corresponding question is how an error in the estimated signal subspace propagates into errors in the recovered locations. Although analogous stability questions arise for continuous media, sharp and geometry-explicit estimates that simultaneously resolve the roles of wavelength, noise, and structural complexity remain difficult in that general setting. 
	 
	 For point scatterers, by contrast, the minimum separation \(\delta_X\), together with the wavenumber \(\kappa\) and the number \(s\) of scatterers, enters explicitly into conditioning and resolvability. A general continuous contrast has no canonical point-to-point minimum separation; hence our results do not themselves give a stability theory for extended media, but instead pose a provocative question: what notion of scale separation or effective complexity plays an analogous role in the continuum, and can it be recovered as a meaningful limit of the discrete theory? In this sense, the discrete model provides both a rigorous test bed for stability and a possible source of principles for the more difficult continuous problem.
	
\begin{figure}[t]
\centering
\scalebox{0.5}{
\begin{tikzpicture}[x=1pt,y=1pt,line cap=butt,line join=miter]
  \path[use as bounding box] (0,0) rectangle (677,348);

  \tikzset{
    label24/.style={inner sep=0pt,outer sep=0pt,font=\fontsize{24}{28}\selectfont},
    label25/.style={inner sep=0pt,outer sep=0pt,font=\fontsize{25}{29}\selectfont},
    wavefront/.style={draw=black,line width=0.45pt},
  }

  \node[label24,anchor=base west] at (5.67,264.33) {\bfseries incident wave};
  \node[label24,anchor=base west] at (286.47,318.33) {\bfseries scatterers};
  \node[label25,anchor=base west] at (491.67,304.83) {\bf scattered wave};
  \node[label24,anchor=base west] at (254.07,21.33) {\bf scattered wave};

  \draw[wavefront] (210.87,329.13) -- (151.47,161.73) -- (156.87,161.73);
  \draw[wavefront] (227.07,323.73) -- (167.67,150.93) -- (167.67,156.33);
  \draw[wavefront] (243.27,318.33) -- (189.27,156.33);

  \draw[wavefront] (164.97,256.23) -- (237.89,228.89);
  \fill (251.37,223.83) -- (236.38,224.83) -- (239.42,232.92) -- cycle;

  \begin{scope}[shift={(356.67pt,213pt)},x=40pt,y=40pt]
    \draw[thick] (0,0) circle[radius=2.0];

    \foreach \x in {-1.5,-1.0,...,1.5} {
      \foreach \y in {-1.5,-1.0,...,1.5} {
        \pgfmathsetmacro{\r}{sqrt(\x*\x+\y*\y)}
        \ifdim\r pt<1.8pt
          \fill[gray!45] (\x,\y) circle[radius=1.1pt];
        \fi
      }
    }

    \foreach \p in {
      (-1.5,0.5),
      (-1.0,-0.5),
      (-0.5,1.0),
      (-0.5,-1.0),
      (0.5,0.0),
      (0.5,-1.5),
      (1.0,1.0),
      (1.0,0.5),
      (1.5,-0.5)
    }{
      \fill[red!80!black] \p circle[radius=3.1pt];
    }
  \end{scope}

  \draw[wavefront] (432.27,323.73) -- (491.67,183.33);
  \draw[wavefront] (448.47,329.13) -- (507.87,194.13);
  \draw[wavefront] (464.67,334.53) -- (524.07,199.53);
  \draw[wavefront] (459.27,248.13) -- (511.63,278.67);
  \fill (524.07,285.93) -- (513.82,274.93) -- (509.47,282.39) -- cycle;

  \draw[wavefront] (275.67,69.93) -- (437.67,69.93);
  \draw[wavefront] (275.67,86.13) -- (437.67,86.13);
  \draw[wavefront] (275.67,102.33) -- (437.67,102.33);
  \draw[wavefront] (356.67,102.33) -- (356.67,62.73);
  \fill (356.67,48.33) -- (352.36,62.72) -- (361.00,62.72) -- cycle;
\end{tikzpicture}
}
\caption{Scattering experiments with incident and scattered waves}
\end{figure}

	Let
\[
    X=\{\bx_1,\ldots,\bx_s\}\subset B_1(0)
\]
be a cloud of \(s\) distinct point scatterers.  
For the \(m\)-th
incident field, the far-field pattern has the form
\begin{equation}
\label{eq:intro-foldy-lax}
    u_m^\infty(\bom)
    =
    \sum_{j=1}^s
       \alpha_j^{(m)}
       e^{\ii\kappa\bom\cdot\bx_j},
    \qquad
    \bom\in\mathbb S^2,
\end{equation}
up to convention-dependent signs and normalization constants. 	In the Foldy--Lax point-scatterer model,  the
effective coefficient vector $\bm{\alpha}^{(m)}=(\alpha^{(m)}_1,\dots,\alpha^{(m)}_s)^T$  is determined by the coupled system
\begin{equation}
\label{eq:intro-effective-coefficients}
    \bm\alpha^{(m)}
    =
    A(\Id-\mathcal L_XA)^{-1}
    \bm u_{\rm inc}^{(m)},
\end{equation}
where \(A=\diag[a_1,\dots,a_s]\) contains the individual scattering strengths,
\(\mathcal L_X\) is the off-diagonal interaction matrix
$$(\mathcal L_X)_{ij}=G(\bx_i,\bx_j),\,\,i\neq j,\quad (\mathcal L_X)_{ii}=0,\,\,\forall i,$$
and
\(\bm u_{\rm inc}^{(m)}\) is the incident field evaluated at the
scatterer locations.    Here for convenience we make the non-resonance assumption that  $\Id-\mathcal L_X A$ is invertible. If incidence information is not used, $\bm{\alpha}$ is treated as a nuisance coefficient vector
\cite{Martin,Mishchenko}. 

The coefficients in \eqref{eq:intro-foldy-lax} are therefore not the
bare scattering strengths.  They contain all recurrent point-to-point
interactions represented by the Foldy--Lax system.  Thus the model
retains multiple scattering while reducing the forward problem to a
finite system.  In particular, this is not a Born or single-scattering
linearization.  Multiple scattering changes the coefficient vector
\(\bm\alpha^{(m)}\), but it does not change the family of spherical
Fourier atoms associated with the locations.

Define
\[
    \varphi_\bx(\bom)
    :=
    e^{\ii\kappa\bom\cdot\bx},
    \qquad
    \Phi_Xc
    :=
    \sum_{j=1}^s c_j\varphi_{\bx_j},
\]
and
\[
    \mathcal U_X
    :=
    \operatorname{Ran}\Phi_X
    =
    \operatorname{span}
    \{\varphi_{\bx_1},\ldots,\varphi_{\bx_s}\}
    \subset
 \cH:=   L^2\left(\mathbb S^2,\frac{d\bom}{4\pi}\right)
\]
where 	 the complex Hilbert space $\cH$ of the far-field patterns	is	equipped with   the normalized inner product
	\[
	\ip{f}{g}
	:=\frac1{4\pi}\int_{\Sph} f(\bm\omega)\overline{g(\bm\omega)}\dd\bm\omega
	\]
   and 
    the Fourier subspace $U$ has dimension $s$ iff  $\bx_i\neq \bx_j, i\neq j$.

For \(M\) scattered-field snapshots, let
\[
    C
    :=
    \begin{bmatrix}
       \bm\alpha^{(1)} & \dots & \bm\alpha^{(M)}
    \end{bmatrix}
    \in\mathbb C^{s\times M}.
\]
We assume \(\operatorname{rank}C=s\), hence sufficiently diverse effective illuminations. 
The multi-snapshot data operator then factors as
\begin{equation}
\label{eq:intro-data-factorization}
    Y=\Phi_XC.
\end{equation}
Consequently,
\begin{equation}
\label{eq:intro-range-condition}
    \operatorname{Ran}Y\subseteq\mathcal U_X ,
    \qquad
    \operatorname{Ran}Y=\mathcal U_X
    \quad\Longleftrightarrow\quad
    \operatorname{rank}C=s.
\end{equation}
The incident fields, scattering strengths, and Foldy--Lax interactions
enter \eqref{eq:intro-data-factorization} through the right-hand
coefficient matrix \(C\).  In particular, an unknown nonsingular
mixing of the snapshots does not change their signal subspace:
\[
    \operatorname{Ran}(YQ)=\operatorname{Ran}Y,
    \qquad
    Q\in\mathbb C^{M\times M}
    \quad\hbox{invertible}.
\]
This algebraic invariance is the basic reason that incidence
parameters need not enter the subsequent localization stage.

The rank condition in \eqref{eq:intro-range-condition} is essential.
It requires sufficiently diverse effective illuminations of all
scatterers; \(M\geq s\) alone does not guarantee it.  Coherent
illuminations, vanishing effective coefficients, poor conditioning
near a Foldy--Lax resonance, or an unfavorable acquisition geometry
can all prevent accurate recovery of the full \(s\)-dimensional
subspace.

The construction of a stable approximation to \(\mathcal U\) and
the inversion of that subspace are governed by different mechanisms.
The former depends on illumination diversity, aperture, receiver
calibration, direct-field rejection, multiple scattering, and the
measurement-noise model.  The latter depends on the geometry of the
spherical Fourier atoms, the separation and arrangement of the points,
and the optimization landscape of the MUSIC objective.

We separate these issues by taking as input an \(s\)-dimensional
subspace \(\widetilde{\mathcal U}\), not necessarily itself a Fourier
subspace, together with the deterministic certificate
\begin{equation}
\label{eq:intro-oracle-bound}
    \varepsilon_{\rm sub}
:=   \|P_{\widetilde{\mathcal U}}-P_{\mathcal U_X}\|_2
    <1
\end{equation}
where $\Proj_{\cU}$ and $\Proj_{\widetilde\cU}$  denote the orthogonal projector onto $\cU$ and $\widetilde\cU$, respectively.  
Because the two subspaces have the same finite dimension,
\(\varepsilon_{\rm sub}\) is the sine of their largest principal
angle.  The upstream acquisition and subspace-estimation problem is
compressed into the single quantity \(\varepsilon_{\rm sub}\).  The
downstream localization problem uses only
\(P_{\widetilde{\mathcal U}}\), the wavenumber \(\kappa\), and explicit
geometric and algorithmic parameters.

In other words, our method is
\emph{incidence blind}: after an isolated scattered-field subspace has
been supplied, the localization procedure does not require the
directions, amplitudes, phases, or effective Foldy--Lax coefficients of
the illuminations. 

In the sequel, we write $A\lesssim B$ if $A\le C B$ for an absolute constant $C$, and $A\asymp B$ if both $A\lesssim B$ and $B\lesssim A$. $A\gtrsim B$ is likewise defined.  Constants may depend on dimensionless parameters such as the parameter $\gamma$, but not on $\kappa$, $\delta_X$, $s$, or the point configuration, unless explicitly stated.

	 \subsection{Main contribution}	 
 A standard MUSIC calculation without an accompanying
model-specific theorem evaluates a pseudospectrum on a search grid
and reports its peaks or smallest residual values, but does not by
itself certify that every target has been
initialized, that no accepted point lies outside the target
neighborhoods, that a chosen local iteration remains in the correct
neighborhood, or that it converges at a controlled rate \cite{fannjiang11}. 

For the continuous spherical oracle model considered here, we obtain
a single quantitative certificate that combines all four properties.
 
\begin{enumerate}[label=\textup{(\roman*)}]
\item
Under explicit boundary-containment, arbitrary-cloud separation,
frame-conditioning, and subspace-perturbation hypotheses, the
objective \(\widetilde q\) has a
target-associated local minimum \(\widetilde\bx_j\) in every certified
neighborhood
$
    B_{\rho}(\bx_j),\,\,\rho\asymp\kappa^{-1}, 
$ and
\begin{equation}
\label{eq:intro-localization-rate}
    |\widetilde\bx_j-\bx_j|
  \lesssim    \frac{\varepsilon_{\rm sub}}{\kappa}.
\end{equation}

In addition, the MUSIC-specific local
geometry satisfies 
$    \nabla^2\widetilde q(\bz)\asymp\kappa^2 I_3$ and  each certified neighborhood is a strongly convex well
containing exactly one critical point.

\item 
A finite grid \(G\subset B_1(0)\)  of spacing $O(\kappa^{-1})$ 
generates by a thresholding  rule  at least one grid point in every certified neighborhood and
accepts no grid point outside their union, which can serve as initialization for gradient descent with explicitly certified linear convergence rate.

\item
For arbitrary \(s\)-point clouds, an absolute coherence row sum and
Gershgorin's theorem give the sufficient frame-conditioning scale
\[
    \kappa\delta_X
    \gtrsim
    s^{2/3}.
\]
\item Finally, on uniform configuration classes satisfying the
landscape margins, the deterministic oracle localization modulus
obeys
\[
    \mathfrak R(\eta)
    \lesssim
    \frac{\eta}{\kappa}.
\]
For classes that additionally contain a uniformly admissible
one-point displacement path, a matching two-point lower bound gives
\[
    \mathfrak R(\eta)
    \asymp
    \frac{\eta}{\kappa}. 
\]
\end{enumerate}

Restricted to a uniformly separated and conditioned class, the map sending a point configuration \(X\) to its Fourier subspace \(\mathcal U\) is locally bi-Lipschitz, in the sense that the Grassmannian projector distance is comparable to \(\kappa d_{\rm match}(X,Y)\) where 
\begin{equation}
\label{eq:oracle-matching-distance}
    d_{\rm match}(X,Y)
    :=
    \min_{\pi\in S_s}
    \max_{1\leq j\leq s}|\bx_j-\by_{\pi(j)}|. 
\end{equation}

  The admissible-landscape, coarse-thresholding, and gradient-refinement
mechanism is adapted from the multidimensional Gradient--MUSIC
framework of \cite{gradient-music}. Relative to that abstract
framework, the present analysis focuses on the full-aperture spherical
kernel arising from the Foldy--Lax point-scatterer model, explicit
arbitrary-cloud frame estimates, obstruction examples for the
resulting separation scales, and a deterministic Grassmannian oracle
localization modulus.

\subsection{Related literature} The relationship between MUSIC, the linear sampling method, and the
factorization method is classical
\cite{cheney01,kirsch02,arens09,Cakoni}.  Rigorous stability and
super-resolution analyses for sparse Fourier MUSIC include
\cite{fannjiang11,liao15,liao-fannjiang16}.  In inverse scattering,
subspace and time-reversal methods have also been developed for
point targets with significant target-to-target multiple scattering
\cite{devaney05,marengo-gruber07,challa-sini12}.  More recently,
Alberti, Petit, and Santacesaria \cite{alberti24} proposed a
linearize-and-locally-optimize method for nonlinear Foldy--Lax
observations.

The literature most closely related to this work addresses different
stages of the inverse problem: the physical map from scatterers and
illuminations to measured fields, the estimation of a signal subspace
from those fields, and the subsequent recovery of locations from that
subspace.  Since these stages use different data metrics, their
stability rates are not directly comparable without an additional
result linking measurement error to subspace error.

At the level of qualitative range tests, Cheney \cite{cheney01}
explained the relationship between point-target MUSIC and the linear
sampling method.  Arens, Lechleiter, and Luke \cite{arens09} subsequently
placed MUSIC for extended nonabsorbing scatterers and cracks within the
factorization-method framework, including stability considerations for
noisy and limited data.  These theories apply to classes of obstacles
and media considerably broader than finite point clouds; see also
\cite{Cakoni,kirsch02}.  Our restriction to a finite Foldy--Lax cloud is
therefore not intended to enlarge the qualitative applicability of
MUSIC.  It permits us to exploit the explicit full-aperture spherical
kernel in order to obtain quantitative curvature, frame-conditioning,
initialization, optimization, and point-localization estimates.

Sparse-recovery and spectral-estimation theories provide further close
precedents.  Fannjiang \cite{fannjiang10} analyzed high-frequency
SIMO, MISO, and MIMO inverse scattering of discrete targets by
compressed sensing, both with and without the Born approximation, and
derived probabilistic recoverability results for
\(\ell^1\)-based minimization and regularization.  
A particularly relevant comparison is the high-frequency compressed-sensing analysis of Fannjiang \cite{fannjiang10}  which derives for point scatterers constrained to a fixed grid of spacing \(\ell\) the uniform noisy-recovery condition
\(\kappa\ell\gtrsim s\) for the multiple-scattering Foldy-Lax model in contrast to our scaling law
$\kappa\delta_X\gtrsim s^{2/3}$ obtained without incidence information and grid-constraint.

Fannjiang
\cite{fannjiang11} subsequently obtained coherence- and
restricted-isometry-based guarantees for sparse-object MUSIC,
including noisy thresholding, refined-grid localization, and a
multiple-scattering extension.  Off-grid stability and
super-resolution for one- and multidimensional spectral MUSIC were
developed in \cite{liao-fannjiang16,liao15}.  Thus noisy recovery,
off-grid localization, multidimensionality, and multiple scattering
are established features of earlier sparse-imaging and MUSIC theory.
The present setting differs in its continuous full-aperture spherical
data space and in its deterministic oracle input.

A related but different ``linearize and locally
optimize'' strategy was proposed by Alberti, Petit, and Santacesaria
\cite{alberti24} for nonlinear Foldy--Lax observations.  In their method,
a Beurling--LASSO problem based on the Born linearization supplies an
initialization, after which the nonlinear forward functional is locally
minimized.  Their theoretical guarantees concern the initial
sparse-measure reconstruction under small-intensity and separation
hypotheses, together with numerical evidence for the subsequent
nonlinear refinement. 

The result should also be distinguished from stability estimates for
the full parameter-to-data map.  Bourgeois \cite{bourgeois13} proved
inverse Lipschitz stability on compact finite-dimensional parameter
sets, whereas Hohage and Weidling \cite{hohage15} obtained logarithmic
regularization rates for Tikhonov regularization of an infinite-dimensional refractive index.
For one fixed incident wave and its full complex far-field pattern,
Rondi \cite{rondi08} proved nested-logarithmic and sharper
obstacle-only conditional moduli for polyhedral scatterers, while
Sincich and Sini \cite{SS08} proved logarithmic stability on a
geometrically restricted class of sound-soft obstacles.  Both results compare exact, model-generated
far-field patterns and provide conditional moduli of injectivity; they
do not obtain reconstruction estimators for arbitrary noisy data.

Inverse point-source results are geometrically closer to the recovery
of a finite point cloud, but they concern a different physical map.
Mdimagh and Ben Saad \cite{MBS16} established local Lipschitz stability
for point sources and  Bao, Liu, and Triki \cite{bao21} proved
H\"older stability for separated radiative point sources in a known
inhomogeneous medium, both  from a single
boundary Cauchy measurement.

\commentout{
This pairwise estimate is structurally analogous to the
finite-dimensional-parameter inverse Lipschitz theorem of Bourgeois
\cite{bourgeois13}.  In its abstract form, that theorem gives a global
inverse Lipschitz estimate on a compact convex subset of a
finite-dimensional parameter space, assuming \(C^1\) regularity and
injectivity of both the forward map and every restricted Fr\'echet
derivative.  Its Helmholtz application concerns absorbing refractive
indices and compares their full multistatic far fields in
\(L^2(\mathbb S^2\times\mathbb S^2)\).  The corresponding constant is
obtained by compactness and is not explicit.  Our result concerns a
different, post-measurement map from separated point configurations to
Fourier signal subspaces.  It is local rather than global, but provides
explicit \(1/\kappa\) scaling and, in addition to the pairwise
exact-subspace estimate, an upper and conditional lower oracle
localization modulus.  Neither result is a direct consequence of the
other.

This should also be distinguished from the variational-source-condition
theory of Hohage and Weidling \cite{hohage15}, which analyzes Tikhonov
reconstruction from noisy raw scattering data and derives logarithmic
convergence rates as the noise level tends to zero.  The two results
address complementary levels of the inverse problem: noisy-data
regularization for an infinite-dimensional medium and finite-radius
localization from a perturbed finite-dimensional signal subspace.

Quantitative stability from a single incident wave is also known for
structured classes of sound-soft scatterers and obstacles.  Here
``single measurement'' means one fixed incident plane wave together
with its full complex far-field pattern over all observation
directions.  
 Rondi
\cite{rondi08} proved a global conditional Hausdorff-stability estimate
for multiple polyhedral scatterers, including screens, and obtained an
improved essentially Hölder regime for genuine polyhedral obstacles
after an error threshold depending on the minimum boundary-cell scale.
Sincich–Sini \cite[Theorem~2.3]{SS08}  also use one fixed incident wave, but restrict an infinite-dimensional \(C^{1,\alpha}\) obstacle class by sandwiching every obstacle between two fixed reference obstacles separated by a sufficiently small volume shell. This yields logarithmic stability.
The results of Rondi and of Sincich and Sini are conditional
moduli of injectivity for two exact, model-generated far-field
patterns, but not an end-to-end noisy-measurement reconstruction theorem.

The present result concerns a different stage: it assumes that an
\(s\)-dimensional signal subspace has already been estimated and
quantifies the subsequent point-localization error as
\(O(\varepsilon_{\rm sub}/\kappa)\). No comparison of these rates is
intended without an additional measurement-to-subspace perturbation
estimate.
}

\commentout{
The present results should also be distinguished from conditional
stability and regularization estimates for the full
parameter-to-data map.  Bourgeois \cite{bourgeois13} proved an abstract
inverse Lipschitz theorem on compact convex subsets of
finite-dimensional parameter spaces under injectivity of the forward
map and its restricted Fr\'echet derivatives, with an application to
multistatic Helmholtz medium scattering.  Our local bi-Lipschitz
estimate for
\[
   X\longmapsto P_{\mathcal U_X}
\]
is structurally analogous, but it concerns a different map, is local in
projector distance, and gives explicit wavelength and geometric
dependence.  Hohage and Weidling \cite{hohage15}, by contrast, derived
logarithmic convergence rates for Tikhonov regularization of an
infinite-dimensional refractive index from noisy near- or far-field
data.  Their result addresses end-to-end regularization from raw
measurements, whereas the present theorem begins after the
finite-dimensional signal subspace has been estimated.

Conditional stability from one fixed incident plane wave is known for
structured classes of sound-soft scatterers and obstacles.  Here
``single measurement'' means the complete complex far-field pattern
over all observation directions for that one illumination.  Rondi
\cite{rondi08} proved a global-in-class conditional estimate for a broad
class of multiple polyhedral scatterers, including screens.  His
general result first controls
\(\min\{d_H(\Sigma,\Sigma'),h\}\) through a nested-logarithmic modulus,
and the untruncated Hausdorff estimate requires an \(h\)-dependent
small-error threshold.  For a more restrictive multiple-obstacle
class, the corresponding near-field estimate is H\"older, but after
far-field-to-near-field continuation the far-field modulus is not
literally H\"older.  Sincich and Sini \cite{SS08} proved logarithmic
stability for uniformly \(C^{1,\alpha}\) sound-soft obstacles
constrained to a prescribed inner--outer geometric bracket.  In that
work, ``local'' refers to the restricted obstacle class, rather than to
a local optimization method or a neighborhood in the Grassmannian
projector metric.  Both results compare exact, model-generated
far-field patterns and provide conditional moduli of injectivity; they
do not define reconstruction estimators for arbitrary noisy data.
}

  \subsection{Organization}
  We present our main results,  including the landscape theorem and localization modulus theorem,  in \cref{sec:main}. 
 We state and prove a uniform frame bounds in \cref{sec:frame}, a key property of  independent interest. 
\cref{sec:energy}   establishes kernel and
nonlocal-energy estimates. 

We prove the main theorems in \cref{sec:proof}.
  We present and certify the initialization-augmented optimization-formulated MUSIC algorithm in \cref{sec:algorithm} and conduct a numerical experiment in \cref{sec:num}. 
  
  We give an elementary proof of  the positivity of the Gram matrix in \cref{app:gram}
and  lower-frame, upper-frame, and absolute-row-sum constructions in \cref{app:lower-example}, \cref{app:upper-example}, \cref{app:row-sum-example}
that clarify the sharpness and limitations of the arbitrary-cloud
separation analysis.

\section{Main Theorems}\label{sec:main}
	The noiseless and perturbed MUSIC functions are
	\[
	q_X(\bm{z})=\norm{(\Id-\Proj_{\cU_X})\varphi_{\bm{z}}} ^2,
	\qquad
	\widetilde q(\bm{z})=\norm{(\Id-\Proj_{\widetilde{\cU}})\varphi_{\bm{z}}} ^2,
	\]
	\cite{schmidt86,kirsch02}. 
	Since $\varphi_{\bm{x}_j}\in\cU_X$, one has
	\[
	q_X(\bm{x}_j)=0,
	\qquad j=1,\dots,s.
	\]
On the other hand, since $\widetilde U$ is generally not a Fourier subspace, $\widetilde q$ may not have any roots.

	Define the measurement kernel as 
	\begin{eqnarray}
		\label{eq:spherical-kernel} 
		K_\kappa(\bm{h})
		:=\inner{\varphi_{\bm{0}}}{\varphi_{\bm{h}}} 
		=\frac1{4\pi}\int_{\bmS^2} e^{i\kappa\bm{\omega}\cdot \bm{h}}\dd \bm{\omega}.
	\end{eqnarray}
	By rotational invariance,
	\[
		K_\kappa(\bm{h})=
		\frac{\sin(\kappa\abs{\bm{h}})}{\kappa\abs{\bm{h}}}
	\]
	with the convention $K_\kappa(\bm{0})=1$.
	
Define the kernel vector
	\begin{eqnarray}
	\bm{k}_X(\bm{z})=
	\begin{bmatrix}
		K_\kappa(\bm{z}-\bm{x}_1)\\
		\vdots\\
		K_\kappa(\bm{z}-\bm{x}_s)
	\end{bmatrix}
	\in\bmC^s.\label{eq:kernel-vector}
	\end{eqnarray}
		Let 
	\[
	\Phi_X=\big[\varphi_{\bx_1},\dots,\varphi_{\bx_s}\big]
	\]
	denote the Vandermonde operator from $\CC^s$ to $\cH$. 
	
Let  $\Phi_X^*$ denote the adjoint operator of $\Phi_X$ from $\cH$ to $\CC^s$ and
	\beq
	\label{eq:gram}	
	G_X:=\left[K_\kappa(\bm{x}_j-\bm{x}_k)\right]_{j,k=1}^s= \Phi_X^*\Phi_X \eeq
be  the Gram matrix  which is  positive-definite and invertible  for $s$ distinct atoms (see \cref{app:gram} for an elementary proof). 
We
 can write 
	\beq
	\Proj_{\cU_X}&=&\Phi_XG_X^{-1}\Phi_X^*,\\
		q_X(\bm{z})&=&1-\bm{k}_X(\bm{z})^*G_X^{-1}\bm{k}_X(\bm{z}).\label{eq:music2}
		\eeq

The following proposition  provides a geometric interpretation of the MUSIC objective in connection to the chordal distance on the Grassmannian of $s$-dimensional subspaces in $\cH$. 

    \begin{proposition}[MUSIC as a frame-weighted chordal residual]
\label{prop:music-chordal-residual}
Let \(Z=\{\bz_1,\ldots,\bz_s\}\) be a configuration for which
$
    G_Z:=\Phi_Z^*\Phi_Z
$
is invertible, and set
\[
    Q_Z:=\Phi_ZG_Z^{-1/2}.
\]
Then \(Q_Z^*Q_Z=I_s\) and
\[
    \sum_{j=1}^s\widetilde q(\bz_j)
    =
    \|(I-P_{\widetilde{\mathcal U}})\Phi_Z\|_{\mathrm{HS}}^2.
\]
If
\[
    (1-\mu)I_s
    \preceq
    G_Z
    \preceq
    (1+\mu)I_s,\qquad 0<\mu<1,
\]
then
\beq
    (1-\mu)
    d_{\mathrm{ch}}^2(
        \mathcal U_Z,\widetilde{\mathcal U}
    )
    \leq
    \sum_{j=1}^s\widetilde q(\bz_j)
    \leq
    (1+\mu)
    d_{\mathrm{ch}}^2(
        \mathcal U_Z,\widetilde{\mathcal U}
    ),
\eeq
where
\[
    d_{\mathrm{ch}}^2(
        \mathcal U_Z,\widetilde{\mathcal U}
    )
    :=
    \|(I-P_{\widetilde{\mathcal U}})Q_Z\|_{\mathrm{HS}}^2
    =
    \frac12
    \|P_{\mathcal U_Z}
       -P_{\widetilde{\mathcal U}}\|_{\mathrm{HS}}^2.
\]
\end{proposition}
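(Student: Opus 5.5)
The plan is to verify the three claims in order: orthonormality of \(Q_Z\), the Hilbert--Schmidt identity, and the two-sided comparison with the chordal distance.

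\textbf{Orthonormality.} Since \(G_Z=\Phi_Z^*\Phi_Z\) is Hermitian and invertible, and positive definite by \cref{app:gram} or by the Gram structure, \(G_Z^{-1/2}\) is a well-defined Hermitian positive definite matrix. Then
\[
Q_Z^*Q_Z=G_Z^{-1/2}\Phi_Z^*\Phi_ZG_Z^{-1/2}=G_Z^{-1/2}G_ZG_Z^{-1/2}=I_s,
\]
and \(\operatorname{Ran}Q_Z=\operatorname{Ran}\Phi_Z=\mathcal U_Z\). It follows that \(P_{\mathcal U_Z}=Q_ZQ_Z^*\).

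\textbf{Hilbert--Schmidt identity.} Expanding \(\|(I-P_{\widetilde{\mathcal U}})\Phi_Z\|_{\mathrm{HS}}^2\) in the standard basis \(e_j\) of \(\mathbb C^s\) gives
\[
\sum_j\|(I-P_{\widetilde{\mathcal U}})\varphi_{\bz_j}\|_2^2=\sum_j\widetilde q(\bz_j).
\]
This uses the definition of \(\widetilde q\) and nothing more.

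\textbf{Two-sided bound.} Set \(T=(I-P_{\widetilde{\mathcal U}})\). Since \(\Phi_Z=Q_ZG_Z^{1/2}\),
\[
\|T\Phi_Z\|_{\mathrm{HS}}^2=\operatorname{tr}\bigl(G_Z^{1/2}Q_Z^*T^*TQ_ZG_Z^{1/2}\bigr)=\operatorname{tr}(G_Z\,M),
\qquad M:=Q_Z^*TQ_Z\succeq0,
\]
where we use \(T^*T=T\) because \(T\) is an orthogonal projector. For Hermitian \(M\succeq0\), the bounds \((1-\mu)I\preceq G_Z\preceq(1+\mu)I\) give
\[
(1-\mu)\operatorname{tr}M\le\operatorname{tr}(G_ZM)\le(1+\mu)\operatorname{tr}M.
\]
To see this, write \(\operatorname{tr}(G_ZM)=\operatorname{tr}(M^{1/2}G_ZM^{1/2})\) and apply the Loewner bounds after conjugating by \(M^{1/2}\). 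Finally, \(\operatorname{tr}M=\|TQ_Z\|_{\mathrm{HS}}^2=d_{\mathrm{ch}}^2\) by definition.

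\textbf{The equality \(\|TQ_Z\|_{\mathrm{HS}}^2=\tfrac12\|P_{\mathcal U_Z}-P_{\widetilde{\mathcal U}}\|_{\mathrm{HS}}^2\).} Write \(P=Q_ZQ_Z^*\) and \(\widetilde P=P_{\widetilde{\mathcal U}}\). On one side,
\[
\|TQ_Z\|_{\mathrm{HS}}^2=\operatorname{tr}(Q_Z^*(I-\widetilde P)Q_Z)=s-\operatorname{tr}(P\widetilde P).
\]
On the other side, since both are rank-\(s\) projectors,
\[
\|P-\widetilde P\|_{\mathrm{HS}}^2=\operatorname{tr}P+\operatorname{tr}\widetilde P-2\operatorname{tr}(P\widetilde P)=2s-2\operatorname{tr}(P\widetilde P).
\]
Here it is important that \(\dim\widetilde{\mathcal U}=s\), which is the standing hypothesis, and that the traces are finite because all the operators involved have finite rank. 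Comparing the two expressions gives the equality.

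The only point needing care is the trace inequality with the noncommuting \(G_Z\) and \(M\), and the \(M^{1/2}\) conjugation handles it. Beyond that the argument is routine.
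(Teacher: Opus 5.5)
Your proof is correct and follows the paper's argument: factor \(\Phi_Z=Q_ZG_Z^{1/2}\), write the sum as \(\operatorname{tr}(G_ZM)\) with \(M=Q_Z^*(I-P_{\widetilde{\mathcal U}})Q_Z\succeq0\) and \(d_{\mathrm{ch}}^2=\operatorname{tr}M\), then apply the Loewner bounds on \(G_Z\). You also spell out two steps the paper leaves implicit: the \(M^{1/2}\)-conjugation that justifies the trace inequality, and the trace computation behind \(\|(I-P_{\widetilde{\mathcal U}})Q_Z\|_{\mathrm{HS}}^2=\tfrac12\|P_{\mathcal U_Z}-P_{\widetilde{\mathcal U}}\|_{\mathrm{HS}}^2\), which uses \(\dim\mathcal U_Z=\dim\widetilde{\mathcal U}=s\).
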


\begin{proof}
Let
\[
    A:=
    Q_Z^*(I-P_{\widetilde{\mathcal U}})Q_Z
    \succeq0.
\]
Since \(\Phi_Z=Q_ZG_Z^{1/2}\),
\[
    \sum_{j=1}^s\widetilde q(\bz_j)
    =
    \operatorname{tr}(G_ZA),
\]
whereas
\[
    d_{\mathrm{ch}}^2(
        \mathcal U_Z,\widetilde{\mathcal U}
    )
    =
    \operatorname{tr}(A).
\]
The result follows from the spectral bounds on \(G_Z\).
\end{proof}

This shows that the separable MUSIC objective is uniformly
equivalent to chordal subspace fitting on frame-conditioned candidate
configurations. Its computational advantage is that, once distinct
initial points have been placed in the certified wells, the
refinement variables decouple. The equivalence alone does not enforce
distinctness and does not by itself prove minimax optimality; those
properties follow from the threshold, well-separation, clustering,
and localization-modulus results.
	
	To lighten the notation, we will drop the subscript $X$ and write $\Phi, U, \bm k,\dots$ etc. when confusion does not arise in the sequel. 
	\subsection{Threshold-admissible  landscape theorem} \label{sec:landscape}

	Our first goal is to prove that 
	under an explicit arbitrary-cloud packing condition and an oracle-perturbation bound, the perturbed MUSIC function is a threshold-admissible landscape and  has one strongly convex well near every true point, a uniform objective gap outside the certified wells, and an \(O(\eps_{\rm sub}/\kappa)\) localization error. A fixed-step gradient map with \(h\asymp\kappa^{-2}\) is invariant and contractive on each well, and an \(O(\kappa^{-1})\)-mesh thresholding procedure supplies at least one valid initialization for every well. 
				\begin{theorem}[Certified spherical MUSIC landscape and refinement]
		\label{thm:certified-music-landscape}
			Let
		$$
		X=\{\bm{x}_1,\dots,\bm{x}_s\},\quad s\ge 2
		$$
		be an  $s$-point cloud and 
			\[
	\delta_X:=\min_{j\ne k}\abs{\bm{x}_j-\bm{x}_k}
	\]
	be its minimum separation.

Suppose		$$
		\bigcup_{j=1}^s\overline{B_\rho(\bm{x}_j)}
		\subset B_1(\bm{0})\subset\mathbb R^3, \quad\rho\le \frac{\delta_X}{2}.	$$
 Define two  fundamental dimensionless constants:
		\beq
		\label{eq:packing}
	\gamma:=\rho{\kappa}, \qquad \mu_X
		:=
		\frac{9}{2}
		\frac{s^{2/3}}{\kappa\delta_X}<1
\eeq
with  $\kappa>0$ 
	and four other derived constants
	\beq
\nu_X
		&:=&
	 \frac{64c_\nu^2}{27}
\frac{\mu_X^2}{s} = 	48c_\nu^2
		\frac{s^{1/3}}{\kappa^2\delta_X^2} \label{eq:mu-nu},\qquad c_\nu:=\sup_{t>0}\abs{t\cos t -\sin t}/t \,\,\approx 1.0631,\\
				\nonumber	C_1&:=&
			\frac{\gamma^2}{9}
			+
			\nu_X+\sqrt{1+\nu_X}
			\sqrt{				\left(
				\frac{1}{3}+\frac{\gamma^2}{10}
				\right)^2
				+
				4\nu_X
			}\\
			\nonumber 	C_2&:=& \frac{8}{15} \gamma^2 +6\nu_X+ \frac{2\mu_X}{1-\mu_X} C_1.
\eeq
Let \(\widetilde{\mathcal U}\) be any $s$-dimensional subspace of $\cH$ 
satisfying
\[
    \|P_{\widetilde{\mathcal U}}-P_{\mathcal U_X}\|_2 =\epssub.
\]
Assume 
\beq
 \label{eq:A-gamma}
 A_\gamma& :=& \frac{b_\gamma-\mu_X-\nu_X}{1-\mu_X}>0,\quad b_\gamma
		:=
		1-\sup_{t\ge \gamma} \sinc^2(t)
 \eeq
			and 	\beq \label{eq:noise}C_2<\frac23,\qquad
\epssub
	< \min\left\{ \frac{\frac{\sqrt3}{2}\gamma
            \left(\frac23-C_2\right)
       }{1+\frac{\sqrt3}{2}\gamma \left(\frac{2}{\sqrt5}+\frac23\right)},\,\,   \sin\left(
        \frac12\arcsin\sqrt{A_\gamma}.
    \right)\right\},
		\eeq
		Define
\beq
\label{eq:threshold-parameters}
\tau_0:=\epssub^2,\qquad    \tau_1
    :=
    \sin^2\left(
        \arcsin\sqrt{A_\gamma}-\arcsin\epssub
    \right).
\eeq
Then  \(\tau_0<\tau_1\) and:

\begin{enumerate}[label=\textup{(\roman*)}]
\item
For every \(j\),
\beq
\label{eq:main-hessian-bounds}
    C_0^-\kappa^2I_3
    \preceq
    \nabla^2\widetilde q(\bz)
    \preceq
    C_0^+\kappa^2I_3,
    \qquad
    \bz\in\overline{B_\rho(\bx_j)}
\eeq
where
\beq\label{eq:C_0}
C^-_0&:=&	\frac23
			-
			C_2
			-
			\left(
			\frac{2}{\sqrt5}+\frac23\right) 	\eps_{\rm sub} > \frac{
            \frac23-C_2
       }{1+\frac{\sqrt3}{2}\gamma \left(\frac{2}{\sqrt5}+\frac23\right)},\\
    C_0^+
    &:=&\frac23+C_2
      +\left(\frac2{\sqrt5}+\frac23\right)\epssub.
\eeq

\item
Every point outside the certified wells satisfies
\beq\label{eq:upper-threshold}
    \widetilde q(\bz)\geq\tau_1,
    \qquad
    \bz\in
    B_1(0)\setminus
    \bigcup_{j=1}^sB_\rho(\bx_j).
\eeq
\item
Each \(B_\rho(\bx_j)\) contains exactly one critical point
\(\widetilde\bx_j\), which is its unique minimizer, and
\beq
\label{eq:localization-error}
    |\widetilde\bx_j-\bx_j|   \le  \frac{2\epssub}{\sqrt3\,C_0^-\kappa}<\rho,
    \qquad
    \widetilde q(\widetilde\bx_j)\leq\tau_0.
\eeq

\item 	If the  fill distance of  a finite set \(G\subset B_1(0)\)
	\begin{equation}
	\label{eq:grid-fill-distance}
		\mesh_{B_1}(G)
		:=
		\sup_{\bz\in B_1(0)}
		\min_{\bm g\in G}|\bz-\bm g|
	\end{equation}
 satisfies
\[
    \operatorname{mesh}_{B_1}(G)
    <
    \min\left\{
        \rho- \frac{2\epssub}{\sqrt3\,C_0^-\kappa},
        \,
        \frac{\sqrt3}{2\kappa}(\tau_1-\tau_0)
    \right\},
\]
then
\[
    G_{\tau_1}:=
    \{\bg\in G:\widetilde q(\bg)<\tau_1\}
\]
contains at least one point in every certified well and no point
outside their union.

\item
For
\[
    0<h\leq
    \frac{2}{(C_0^-+C_0^+)\kappa^2}
    =
    \frac{3}{2\kappa^2},
\]
the gradient map
\[
    T_h(\bz):=\bz-h\nabla\widetilde q(\bz)
\]
maps every \(\overline{B_\rho(\bx_j)}\) into \(B_\rho(\bx_j)\) and is
a contraction there with factor
\[
    r_h:=1-hC_0^-\kappa^2<1.
\]
Consequently, every accepted initialization converges geometrically
to the unique minimizer in its well.
\end{enumerate}
	\end{theorem}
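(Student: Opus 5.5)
The plan is to first establish the noiseless landscape for $q_X$ and then transfer everything to $\widetilde q$ by a perturbation argument in $\epssub$.

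\textbf{Noiseless frame and kernel bounds.} I would invoke the frame bound of \cref{sec:frame}: the Gershgorin absolute-row-sum argument gives $(1-\mu_X)I_s\preceq G_X\preceq(1+\mu_X)I_s$ under \eqref{eq:packing}, and hence $\|G_X^{-1}-I\|\le \mu_X/(1-\mu_X)$. The kernel and nonlocal-energy estimates of \cref{sec:energy} should control the contributions of the atoms $k\ne j$ in a well around $\bx_j$. Specifically, I would bound $\sum_{k\ne j}|K_\kappa(\bz-\bx_k)|^2$ and the corresponding sums of first and second derivatives (via $c_\nu$) by $\nu_X$-type quantities. The $c_\nu$ factor and the $s^{1/3}/(\kappa\delta_X)^2$ scaling come from counting points in dyadic shells.

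\textbf{Hessian bounds, part (i).} I would write $q_X=1-\bm k^*G^{-1}\bm k$ and split it as $1-|K_\kappa(\bz-\bx_j)|^2$ plus a remainder. The local term $1-\sinc^2(\kappa r)$ has Hessian $\frac23\kappa^2 I$ at $r=0$, with an $O(\gamma^2)$ deviation on $B_\rho$; the Taylor constants $\gamma^2/9$, $1/3+\gamma^2/10$ and $8\gamma^2/15$ arise here. The remainder terms from the off-diagonal atoms and from $G^{-1}-I$ are bounded via Cauchy--Schwarz by $C_1$ and $C_2$. This gives $|\nabla^2 q_X-\frac23\kappa^2 I|\le C_2\kappa^2$.

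For the perturbation, I would write $\widetilde q-q_X=-\langle(P_{\widetilde\cU}-P_{\cU})\varphi_\bz,\varphi_\bz\rangle$ and differentiate twice. Since $\|\partial\varphi_\bz\|=\kappa/\sqrt3$ and $\|\partial^2\varphi_\bz\|\le\kappa^2/\sqrt5$, the Hessian perturbation is at most $(\frac{2}{\sqrt5}+\frac23)\kappa^2\epssub$. This yields \eqref{eq:main-hessian-bounds}. Similarly, $|\nabla(\widetilde q-q_X)|\le \frac{2\kappa}{\sqrt3}\epssub$, and this bound is used in parts (iii) and (iv).

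\textbf{Outer gap, part (ii).} For $\bz$ outside the wells, $\|P_\cU\varphi_\bz\|^2=\bm k^*G^{-1}\bm k\le (1-\mu_X)^{-1}|\bm k|^2$. I would bound $|\bm k|^2\le \sup_{t\ge\gamma}\sinc^2 t+\nu_X$, where the nearest point contributes the sup and the rest contribute the nonlocal energy. The numerator needs the reformulation $1-(\ldots)/(1-\mu_X)$; I expect the bookkeeping to give $q_X\ge A_\gamma$, i.e.\ $\sin\angle(\varphi_\bz,\cU)\ge\sqrt{A_\gamma}$. The principal angle to $\widetilde\cU$ then shifts by at most $\arcsin\epssub$ (triangle inequality for angles), which gives $\widetilde q\ge\tau_1$. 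The condition $\epssub<\sin(\frac12\arcsin\sqrt{A_\gamma})$ gives $\tau_1>\tau_0$.

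\textbf{Remaining parts (iii)--(v).} Since $q_X(\bx_j)=0$, we have $\widetilde q(\bx_j)\le\epssub^2=\tau_0$ and $|\nabla\widetilde q(\bx_j)|\le\frac{2\kappa}{\sqrt3}\epssub$. Strong convexity with modulus $C_0^-\kappa^2$ then places the unique critical point within $\frac{2\epssub}{\sqrt3 C_0^-\kappa}$ of $\bx_j$. The noise hypothesis \eqref{eq:noise} makes this distance smaller than $\rho$, and the outer gap rules out the minimizer lying on the boundary.

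For the grid, a point within the mesh distance of $\widetilde\bx_j$ lies in the well. By the Lipschitz bound $|\nabla\widetilde q|\le 2\kappa/\sqrt3$, which follows from $|\partial\varphi|$ and $\|P\|\le1$, its value is below $\tau_0+\frac{2\kappa}{\sqrt3}\mesh<\tau_1$. Part (ii) excludes grid points outside the wells.

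For the gradient map, the standard bound $|T_h\bz-T_h\bz'|\le\max|1-h\lambda|\,|\bz-\bz'|\le r_h|\bz-\bz'|$ holds on the convex ball. Invariance follows from $|T_h\bz-\widetilde\bx_j|\le r_h|\bz-\widetilde\bx_j|$, but this alone does not guarantee that the image stays inside $B_\rho(\bx_j)$, since $\widetilde\bx_j$ is off-center. I would handle this by splitting $T_h\bz-\bx_j=(\bz-\bx_j)-h\nabla q_X-h\nabla(\widetilde q-q_X)$. The contraction of the noiseless map about $\bx_j$ would provide a margin $hC_0^-\kappa^2\rho$, which must absorb $h\frac{2\kappa}{\sqrt3}\epssub$. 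This is exactly the condition $\epssub<\frac{\sqrt3}{2}\gamma C_0^-$, which I expect \eqref{eq:noise} to encode.

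\textbf{Main obstacle.} The hardest step is the off-diagonal derivative bounds in part (i): sharply controlling $\nabla^2(\bm k^*G^{-1}\bm k)$ beyond the diagonal term with the exact constants $C_1$ and $C_2$.
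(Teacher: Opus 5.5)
Your proposal is correct and follows essentially the paper's route. It uses Gershgorin frame bounds, the split $q=f_1-f_2-f_3$ with $G_X^{-1}=I+R_X$ and the nonlocal energies for (i), the bound $\bm k^*G_X^{-1}\bm k\le(1-\mu_X)^{-1}\|\bm k\|_2^2$ plus principal-angle stability for (ii), strong monotonicity of $\nabla\widetilde q$ for (iii), Lipschitz thresholding for (iv), and averaged-Hessian contraction for (v).

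Your two small deviations are both valid. In (iii) you exclude a boundary minimizer via the outer gap, which works because $\rho\le\delta_X/2$ puts $\partial B_\rho(\bx_j)$ outside every open well, so $\widetilde q\ge\tau_1>\tau_0\ge\widetilde q(\bx_j)$ there; the paper instead uses an outward-gradient estimate. In (v) you contract with the noiseless Hessian and absorb the pointwise perturbation $h\frac{2\kappa}{\sqrt3}\epssub$, whereas the paper contracts with the perturbed Hessian and absorbs $h|\nabla\widetilde q(\bx_j)|$. Your version in fact needs only $\epssub<\frac{\sqrt3}{2}\gamma(\frac23-C_2)$, a weaker requirement than the condition $\epssub<\frac{\sqrt3}{2}\gamma C_0^-$ you stated.
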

Proof of \cref{thm:certified-music-landscape} is given in \cref{sec:proof} after a substantial preparation in \cref{sec:frame} \& \cref{sec:energy} which also give rise to the fundamental packing condition \eqref{eq:packing}. 

The implicit inequalities \eqref{eq:A-gamma} \& \eqref{eq:noise} admit a
simple rational specialization with moderate noise allowance.

\begin{corollary}
\label{cor:concrete-parameter-regime}
Let $s\geq2$ and
\(\bigcup_{j=1}^s\overline{B_\rho(\bx_j)}
\subset B_1(0).\)
Set
\[
      \gamma:=\kappa\rho=\frac7{12},
\]
and suppose
\begin{equation}
\label{eq:concrete-separation-noise-regime}
     \mu_X\leq\frac1{20},  \qquad 0\leq\eps_{\rm sub}\leq\frac7{60}.
\end{equation}
Then
the geometric condition \(\rho\leq\delta_X/2\) holds automatically,
and the conditions \eqref{eq:A-gamma} and \eqref{eq:noise} are satisfied.
\end{corollary}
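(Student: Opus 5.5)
The plan is to reduce everything to the worst case and then verify a handful of explicit numerical inequalities with rigorous rational bounds. First, the geometric claim follows directly. From $\mu_X\le 1/20$ we get $\kappa\delta_X\ge 90\,s^{2/3}\ge 90$. Hence $\delta_X/2\ge 45/\kappa>7/(12\kappa)=\rho$.

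Next I would record monotonicity. With $\gamma$ fixed, each of $\nu_X$, $C_1$, $C_2$ is increasing in $\mu_X$ and $\nu_X$. By \eqref{eq:mu-nu}, $\nu_X=\frac{64c_\nu^2}{27}\mu_X^2/s$ is largest at $\mu_X=1/20$, $s=2$. Likewise $A_\gamma$ decreases in $\mu_X$ and $\nu_X$, so $\sin(\tfrac12\arcsin\sqrt{A_\gamma})$ and the first term in \eqref{eq:noise} both decrease. It therefore suffices to check \eqref{eq:A-gamma} and \eqref{eq:noise} with $\mu_X=1/20$, $\nu_X\le \nu^\ast:=\frac{64c_\nu^2}{27\cdot 800}$ and $\epssub=7/60$. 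For $c_\nu$ I would use the crude bound $c_\nu^2\le 1.14$, justified from the definition, so that $\nu^\ast\le 0.0034$.

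\textbf{Step 1 ($b_\gamma$).} I would show that $\sup_{t\ge 7/12}\sinc^2 t=\sinc^2(7/12)$. On $[7/12,\pi]$ the function $\sinc$ is positive and decreasing, and for $t\ge\pi$ we have $\sinc^2 t\le 1/\pi^2$, which is much smaller. The Taylor bound $\sin t\le t-t^3/6+t^5/120$ gives $\sinc(7/12)\le 1-\tfrac{49}{864}+\tfrac{2401}{2488320}<0.9443$. Hence $b_\gamma>0.108$ and
\[
A_\gamma\ge \frac{0.108-0.05-0.0034}{0.95}>0.0574>0 .
\]

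\textbf{Step 2 ($C_1$, $C_2$).} With $\gamma^2=49/144$ I would bound each piece from above: $\gamma^2/9<0.0379$, and
\[
\sqrt{1+\nu^\ast}\,\sqrt{(1/3+\gamma^2/10)^2+4\nu^\ast}<0.386 ,
\]
so $C_1<0.428$. Then $C_2<\tfrac{8}{15}\gamma^2+6\nu^\ast+\tfrac{2}{19}C_1<0.1815+0.0204+0.0451<0.248<\tfrac23$.

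\textbf{Step 3 (noise bounds).} For the first term of \eqref{eq:noise} I use $\tfrac{\sqrt3}{2}\gamma>0.505$, $\tfrac2{\sqrt5}+\tfrac23<1.562$ and $\tfrac23-C_2>0.418$. Since the expression $a(\tfrac23-C_2)/(1+a\cdot 1.562)$ is increasing in $a=\tfrac{\sqrt3}{2}\gamma$, this gives a value $>0.2111/1.789>0.118>7/60$. For the second term, $\sqrt{A_\gamma}>0.2395$ gives $\arcsin\sqrt{A_\gamma}>0.2395$, and then $\sin(0.5\cdot 0.2395)>0.1194>7/60$. This confirms \eqref{eq:noise}, including the strict inequality $\epssub<\cdots$ at $\epssub=7/60$.

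The main obstacle is that both noise margins are tight: roughly $0.118$ and $0.119$ against $7/60\approx0.1167$. Sloppy bounds on $\sinc(7/12)$, $c_\nu$ or $\sqrt5$ could therefore break the argument. I would control these with explicit alternating-series truncations for $\sin$ and rational enclosures for the square roots, keeping three to four significant digits throughout. If the margin in Step 3 still proves too thin, I would sharpen the bound on $c_\nu$ using its defining supremum, for which $c_\nu\approx1.0631$.
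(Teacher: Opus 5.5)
Your proposal is correct and is essentially the verification the paper has in mind: the paper omits this proof as elementary, and the natural route is exactly your monotonicity reduction to the worst case $\mu_X=1/20$, $s=2$, $\epssub=7/60$, followed by explicit numerical enclosures. The small rounding slips are harmless. For example, the bound $<0.386$ needs $\nu^\ast\le 1.14/337.5\approx 0.00338$; plugging in $0.0034$ gives about $0.3861$. The true margins are roughly $0.1187$ and $0.121$ against $7/60\approx 0.1167$, and your final bounds $C_1<0.428$ and $C_2<0.248$ survive these slips.
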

The verification of \cref{cor:concrete-parameter-regime} is elementary and tedious so will be omitted here.

\subsection{Localization modulus theorem}\label{sec:minimax}

Let \(\mathcal X_s(B_1)\) be the set of unordered \(s\)-point subsets
of \(B_1(0)\), equipped with the bottleneck matching distance $d_{\rm match}$. 
Fix \(s\geq2\), \(\kappa>0\), \(\gamma>0\), and
\(0\leq\overline\mu<1\), and put \(\rho=\gamma/\kappa\).  Define
the uniform configuration class
\begin{equation}
\label{eq:uniform-oracle-class}
\begin{split}
    \mathfrak X_{s,\kappa}(\gamma,\overline\mu)
    :=\biggl\{X\in\mathcal X_s(B_1):\;&
       \bigcup_{\bx\in X}\overline{B_\rho(\bx)}\subset B_1(0),\,\,\rho\leq\frac{\delta_X}{2},
       \quad \mu_X\leq\overline\mu
    \biggr\}.
\end{split}
\end{equation}
The constants required for uniformization are
\begin{align}
\notag
    \overline\nu
    &:=\frac{64c_\nu^2}{27}\frac{\overline\mu^2}{s},\\
    \overline C_1
    &:=\frac{\gamma^2}{9}+\overline\nu
       +\sqrt{1+\overline\nu}
        \sqrt{\left(\frac13+\frac{\gamma^2}{10}\right)^2
                    +4\overline\nu},\notag\\
    \overline C_2
    &:=\frac{8\gamma^2}{15}+6\overline\nu
       +\frac{2\overline\mu}{1-\overline\mu}\overline C_1,\notag\\
    A_\star
    &:=\frac{b_\gamma-\overline\mu-\overline\nu}
             {1-\overline\mu}.\notag
\end{align}
Set
\beqn
\nonumber
    \eta_{\rm loc}
    &:=&\frac{\frac{\sqrt3}{2}\gamma
            \left(\frac23-\overline C_2\right)
       }{1+\frac{\sqrt3}{2}\gamma \left(\frac{2}{\sqrt5}+\frac23\right)},\\
    \eta_{\rm gap}
    &:=&\sin\left(\frac12\arcsin\sqrt{A_\star}\right),\\
    \eta_\star&:=&\min\{\eta_{\rm loc},\eta_{\rm gap}\}.
  \eeqn
   Assume analogously to \eqref{eq:A-gamma}
\begin{equation}
\label{eq:oracle-base-feasibility}
    \overline C_2<\frac23,
    \qquad
    A_\star>0.
\end{equation}
  It is straightforward to verify that if $\epssub<\eta_\star$, then \eqref{eq:noise} holds. 

\begin{corollary}[Local bi-Lipschitz stability]
\label{cor:local-bi-lipschitz}
Let
\[
    d_{\rm sp}(X,Y)
    :=
    \|P_{\mathcal U_X}-P_{\mathcal U_Y}\|_2,\qquad 
  \forall  X,Y\in
    \mathfrak X_{s,\kappa}
    (\gamma,\overline\mu).
\]

Then
\[
    d_{\rm sp}(X,Y)
    \leq
    \frac{\kappa\sqrt{s}}
         {\sqrt{3(1-\overline\mu)}}
    d_{\rm match}(X,Y).
\]
If, in addition, \(d_{\rm sp}(X,Y)<\eta_\star\), then
\[
    d_{\rm match}(X,Y)
    \leq
    \frac{2}
         {\sqrt3\,c_{\rm sp}(X,Y)}
    \frac{d_{\rm sp}(X,Y)}{\kappa},
\]
where
\[
    c_{\rm sp}
    :=
    \frac23-\overline C_2
    -
    \left(\frac2{\sqrt5}+\frac23\right)d_{\rm sp}.
\]
Consequently, the configuration-to-subspace map is locally
bi-Lipschitz, with constants depending only on
\((\gamma,\overline\mu,s)\).
\end{corollary}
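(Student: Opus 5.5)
For the upper (Lipschitz) bound I will work directly with orthonormalized frames and not use the landscape theorem. Let \(\pi\) be an optimal matching, relabel so that \(\by_j\) is matched to \(\bx_j\), and let \(d:=d_{\rm match}(X,Y)\).

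The argument has four steps.

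1. Write \(P_{\mathcal U_X}-P_{\mathcal U_Y}=P_{\mathcal U_X}(I-P_{\mathcal U_Y})-(I-P_{\mathcal U_X})P_{\mathcal U_Y}\). The two summands have orthogonal ranges and orthogonal co-ranges, so
\[
\|P_{\mathcal U_X}-P_{\mathcal U_Y}\|_2=\max\bigl\{\|(I-P_{\mathcal U_Y})P_{\mathcal U_X}\|_2,\ \|(I-P_{\mathcal U_X})P_{\mathcal U_Y}\|_2\bigr\}.
\]
2. Use the frame bound \((1-\overline\mu)I_s\preceq G_X\) from \cref{sec:frame}, valid because \(\mu_X\le\overline\mu\). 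With \(Q_X=\Phi_XG_X^{-1/2}\),
\[
\|(I-P_{\mathcal U_Y})P_{\mathcal U_X}\|_2=\|(I-P_{\mathcal U_Y})Q_X\|_2\le(1-\overline\mu)^{-1/2}\|(I-P_{\mathcal U_Y})\Phi_X\|_{\mathrm{HS}}.
\]
3. Bound the Hilbert--Schmidt norm column by column. Since \(\varphi_{\by_j}\in\mathcal U_Y\),
\[
\|(I-P_{\mathcal U_Y})\varphi_{\bx_j}\|\le\|\varphi_{\bx_j}-\varphi_{\by_j}\|.
\]
Using \(\frac1{4\pi}\int|\bom\cdot\bh|^2\,d\bom=|\bh|^2/3\) and \(|e^{\ii a}-1|\le|a|\),
\[
\|\varphi_{\bx_j}-\varphi_{\by_j}\|^2\le\frac{\kappa^2|\bx_j-\by_j|^2}{3}.
\]
Summing over \(j\) gives \(\|(I-P_{\mathcal U_Y})\Phi_X\|_{\mathrm{HS}}\le\kappa\sqrt{s}\,d/\sqrt3\).
4. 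The same argument with \(X\) and \(Y\) interchanged bounds the other term in step 1, since \(Y\) lies in the same class. Together these give the first inequality.

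For the converse bound I will apply \cref{thm:certified-music-landscape} with the true configuration \(X\) and the perturbed subspace \(\widetilde{\mathcal U}:=\mathcal U_Y\), so that \(\epssub=d_{\rm sp}(X,Y)\).

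1. The class constraint \(\mu_X\le\overline\mu\) makes \(\nu_X\le\overline\nu\), \(C_1\le\overline C_1\) and \(C_2\le\overline C_2\), and gives \(A_\gamma\ge A_\star\). This needs monotonicity of these constants in \(\mu_X\); \(b_\gamma\) is fixed since \(\gamma\) is fixed.
2. Hence \eqref{eq:oracle-base-feasibility} implies \eqref{eq:A-gamma} and \(C_2<2/3\). By the remark following that condition, \(d_{\rm sp}<\eta_\star\) gives \eqref{eq:noise}.
3. The theorem then yields, in each well \(B_\rho(\bx_j)\), a unique minimizer \(\widetilde\bx_j\) of \(\widetilde q=1-\|P_{\mathcal U_Y}\varphi_{\bz}\|^2\) satisfying
\[
|\widetilde\bx_j-\bx_j|\le\frac{2d_{\rm sp}}{\sqrt3\,C_0^-\kappa},\qquad C_0^-\ge c_{\rm sp}.
\]

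The main obstacle is identifying \(\widetilde\bx_j\) with the points of \(Y\). Since \(\widetilde{\mathcal U}=\mathcal U_Y\), we have \(\widetilde q(\by)=0\) for every \(\by\in Y\), so every point of \(Y\) is a global minimizer of \(\widetilde q\).

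1. By part (ii) of the theorem, \(\widetilde q\ge\tau_1>0\) outside \(\bigcup_jB_\rho(\bx_j)\). So every \(\by\in Y\) lies in some well.
2. By part (iii), each well contains only one critical point. An interior minimizer is critical, so each well contains at most one point of \(Y\), and that point equals \(\widetilde\bx_j\).
3. Since \(|Y|=s\) equals the number of wells, the map sending \(\by\in Y\) to its well is a bijection. This matches each \(\by_j\) to \(\widetilde\bx_j\).
4. Therefore
\[
d_{\rm match}(X,Y)\le\max_j|\widetilde\bx_j-\bx_j|\le\frac{2}{\sqrt3\,c_{\rm sp}}\,\frac{d_{\rm sp}(X,Y)}{\kappa},
\]
which is the second inequality. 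Together with the first inequality this shows the map is locally bi-Lipschitz.

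The points to check carefully are that \(Y\subset B_1(0)\), so that part (ii) applies (this holds since \(Y\) lies in the class), and that the monotonicity of \(C_2\) and \(A_\gamma\) in \(\mu_X\) holds as claimed.
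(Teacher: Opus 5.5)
Your proposal is correct and takes the same route as the paper. For the forward bound, you use the synthesis-operator (one-sided gap) estimate with the frame lower bound $1-\overline\mu$ for both $X$ and $Y$ and the column bound $\|\varphi_{\bx_j}-\varphi_{\by_j}\|_2\le\kappa|\bx_j-\by_j|/\sqrt3$, only unpacked via $Q_X$. For the converse, you apply \cref{thm:certified-music-landscape} to $X$ with $\widetilde{\mathcal U}=\mathcal U_Y$, then use the exterior threshold and the unique critical point in each (disjoint, since $\rho\le\delta_X/2$) well to identify $Y$ with $\{\widetilde\bx_1,\ldots,\widetilde\bx_s\}$, and conclude from the localization bound with $C_0^-\ge c_{\rm sp}$, exactly as the paper does.
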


\begin{proof}
Choose a bottleneck-optimal labeling of \(X\) and \(Y\).
The synthesis-operator perturbation estimate gives
\[
\begin{aligned}
    d_{\rm sp}(X,Y)
    &\leq
    \frac{\|\Phi_X-\Phi_Y\|_2}
         {\sqrt{1-\overline\mu}}
    \\
    &\leq
    \frac{\|\Phi_X-\Phi_Y\|_{\mathrm{HS}}}
         {\sqrt{1-\overline\mu}}
    \\
    &\leq
    \frac{\kappa}
         {\sqrt{3(1-\overline\mu)}}
    \left(
        \sum_{j=1}^s
        |\bx_j-\by_j|^2
    \right)^{1/2},
\end{aligned}
\]
which proves the forward inequality.

For the inverse inequality, apply
\cref{thm:certified-music-landscape} to the truth \(X\) and
the observed subspace
\(\widetilde{\mathcal U}=\mathcal U_Y\), whose exact
projector error is \(d_{\rm sp}(X,Y)\).  For every \(\by_j\in Y\),
$
    \widetilde q(\by_j)=0.
$
The exterior threshold therefore places every \(\by_j\) in
one of the certified \(X\)-wells.  Strong convexity implies
that a well contains at most one zero of \(\widetilde q\).
Since there are \(s\) points and \(s\) wells, every well
contains exactly one point of \(Y\), and that point is its
unique minimizer.  The localization estimate in
\cref{thm:certified-music-landscape} now gives the claimed
inverse inequality.
\end{proof}

We next quantify the best possible localization accuracy when the
only observation is a subspace of dimension \(s\), lying within a
prescribed projector distance of the true Fourier subspace.  The uniform class
must control all hypotheses used in the landscape theorem.  

For a nonempty subclass
\(\mathfrak X\subset
  \mathfrak X_{s,\kappa}(\gamma,\overline\mu)\), define its
localization modulus by
\begin{equation}
\label{eq:oracle-risk-definition}
    \mathfrak R_{\mathfrak X}(\eta)
    :=
    \inf_{\widehat X}
    \sup_{X\in\mathfrak X}
    \sup_{\substack{
       \widetilde{\mathcal U}\in\operatorname{Gr}_s(\mathcal H)\\
       \|P_{\widetilde{\mathcal U}}-P_{\mathcal U_X}\|_2\leq\eta
    }}
    d_{\rm match}\bigl(\widehat X(\widetilde{\mathcal U}),X\bigr),
\end{equation}
where the infimum is over all maps
\(\widehat X:\operatorname{Gr}_s(\mathcal H)\longrightarrow
\mathcal X_s(B_1)\). 

\begin{theorem}[Localization modulus]
\label{thm:localization-modulus}
Under \eqref{eq:oracle-base-feasibility}, every nonempty
\(\mathfrak X\subset
\mathfrak X_{s,\kappa}(\gamma,\overline\mu)\) satisfies
\begin{equation}
\label{eq:oracle-upper-bound}
    \mathfrak R_{\mathfrak X}(\eta)
    \leq
    \frac{2}{\sqrt3c_\eta}\frac{\eta}{\kappa},
    \qquad 0\leq\eta<\eta_\star,
\end{equation}
where  $$  c_\eta
    :=\frac23-\overline C_2-\left(\frac{2}{\sqrt5}+\frac23\right)\eta.$$

Suppose, in addition, that there exist
\(X_0=\{\bx_1,\ldots,\bx_s\}\in\mathfrak X\), a unit vector
\(\bv\in\mathbb R^3\), and \(t_0>0\) such that
\begin{equation}
\label{eq:oracle-displacement-path}
    X_t:=\{\bx_1+t\bv,\bx_2,\ldots,\bx_s\}\in\mathfrak X,
    \qquad 0\leq t\leq t_0,
    \qquad t_0\leq\frac{\delta_{X_0}}4.
\end{equation}
Then
\begin{equation}
\label{eq:oracle-two-sided-bound}
    \frac{\sqrt{3(1-\bar\mu)}}2\frac{\eta}{\kappa}
    \leq
    \mathfrak R_{\mathfrak X}(\eta)
    \leq
    \frac{2}{\sqrt3c_\eta}\frac{\eta}{\kappa}
\end{equation}
whenever
\begin{equation}
\label{eq:oracle-two-sided-range}
    0<\eta<\eta_\star,
    \qquad
    \eta\leq\frac{\kappa t_0}{\sqrt{3(1-\overline\mu)}}.
\end{equation}
Consequently, on every uniform class containing such an interior
one-point displacement path,
\[
    \mathfrak R_{\mathfrak X}(\eta)
    \asymp_{\bar\mu, s}\frac{\eta}{\kappa}
    \qquad(\eta\downarrow0).
\]
\end{theorem}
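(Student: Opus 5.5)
For the upper bound \eqref{eq:oracle-upper-bound} I will write down an explicit estimator and bound its worst-case error with \cref{thm:certified-music-landscape}. Fix \(\eta<\eta_\star\). Given \(\widetilde{\mathcal U}\in\operatorname{Gr}_s(\mathcal H)\), let \(\widehat X(\widetilde{\mathcal U})\) be any \(s\) distinct points of \(B_1(0)\) that are local minimizers of \(\widetilde q\) with \(\widetilde q\le\eta^2\) and that are pairwise separated by more than the diameter of a well. If no such choice exists, output an arbitrary fixed configuration. This is a well-defined map on the Grassmannian, and it depends only on \(\widetilde{\mathcal U}\) and the class parameters, not on \(X\).

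Now take \(X\in\mathfrak X\) and \(\|P_{\widetilde{\mathcal U}}-P_{\mathcal U_X}\|_2=\epssub\le\eta<\eta_\star\). Since \(\mu_X\le\overline\mu\), every constant \(\nu_X,C_1,C_2,A_\gamma\) is monotone in \(\mu_X\), so \(C_2\le\overline C_2\) and \(A_\gamma\ge A_\star\). Hence \eqref{eq:A-gamma} and \eqref{eq:noise} hold and the theorem applies. Each true well \(B_\rho(\bx_j)\) contains exactly one critical point \(\widetilde\bx_j\), and \(\widetilde q(\widetilde\bx_j)\le\tau_0=\epssub^2\le\eta^2\). Outside the wells \(\widetilde q\ge\tau_1>\eta^2\); this needs \(\tau_1\) evaluated at \(\epssub\) to exceed \(\eta^2\), which follows from \(\eta<\eta_{\rm gap}\) by the same computation that gives \(\tau_0<\tau_1\). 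So every admissible candidate lies in a well. A well contains only one critical point, and the candidates are distinct, so the admissible set is exactly \(\{\widetilde\bx_j\}\) with the obvious matching. (Alternatively, define \(\widehat X\) by choosing the \(s\) lowest local minima.) Then \(d_{\rm match}\le\max_j|\widetilde\bx_j-\bx_j|\le 2\epssub/(\sqrt3C_0^-\kappa)\). Finally \(C_0^-\ge c_\eta\), because \(C_2\le\overline C_2\) and \(\epssub\le\eta\). This gives the upper bound.

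For the lower bound I will use a two-point (Le Cam) argument. Set \(t:=\sqrt{3(1-\overline\mu)}\,\eta/\kappa\le t_0\) and put \(Y_0=X_0\), \(Y_1=X_t\), both in \(\mathfrak X\). I want \(d_{\rm sp}(X_0,X_t)\le\eta\), so that the single subspace \(\widetilde{\mathcal U}=\mathcal U_{X_t}\) is admissible for both truths. For any \(\widehat X\), the triangle inequality for \(d_{\rm match}\) then gives \(\max_i d_{\rm match}(\widehat X(\widetilde{\mathcal U}),Y_i)\ge\frac12 d_{\rm match}(X_0,X_t)\). Moreover \(d_{\rm match}(X_0,X_t)=t\): the identity matching gives \(t\), and since \(t\le\delta_{X_0}/4\) any other permutation costs at least \(\delta_{X_0}-t>t\). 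This yields \(\mathfrak R\ge t/2=\frac{\sqrt{3(1-\overline\mu)}}2\frac\eta\kappa\).

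The key step, and the one I expect to be the main obstacle, is the displacement estimate \(d_{\rm sp}(X_0,X_t)\le\kappa t/\sqrt{3(1-\overline\mu)}\) with no \(\sqrt s\) factor. The forward bound of \cref{cor:local-bi-lipschitz} already has this form, because only one column of \(\Phi\) moves, so \(\|\Phi_{X_0}-\Phi_{X_t}\|_{\mathrm{HS}}=\|\varphi_{\bx_1}-\varphi_{\bx_1+t\bv}\|\). I will compute \(\|\varphi_{\bx}-\varphi_{\bx+t\bv}\|^2=2(1-\sinc(\kappa t))\le(\kappa t)^2/3\). I will then invoke the synthesis-operator perturbation estimate \(d_{\rm sp}\le\|\Phi_X-\Phi_Y\|_2/\sqrt{1-\overline\mu}\) exactly as it is used in that corollary, with the frame lower bound \(G\succeq(1-\overline\mu)I\) valid for both configurations in the class. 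With this choice of \(t\) we get \(d_{\rm sp}\le\eta\), and combining the two halves gives \eqref{eq:oracle-two-sided-bound} on the range \eqref{eq:oracle-two-sided-range}. The asymptotic equivalence follows because \(c_\eta\to\frac23-\overline C_2>0\) as \(\eta\downarrow0\).
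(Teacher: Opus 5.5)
Your proposal is correct and follows the paper's proof essentially step for step. It uses the same uniformization $C_2\le\overline C_2$, $A_\gamma\ge A_\star$, and the same threshold-selected set of local minima as estimator (the paper thresholds at $\tau_\eta$, you at $\eta^2$; both separate the in-well values $\le\epssub^2$ from the exterior values $\ge\tau_1$). It also uses the identical two-point argument with $t=\sqrt{3(1-\overline\mu)}\,\eta/\kappa$ and the synthesis-operator projector bound.

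The only loose end is your extra requirement that the selected minima be pairwise separated by more than $2\rho$, which you never verify for $\{\widetilde\bx_j\}$. It is unnecessary, because disjoint wells, each containing exactly one critical point, already force the selection. It does in fact hold: $\overline C_2<\frac23$ gives $\gamma<\sqrt5/2$, while $\mu_X<1$ gives $\kappa\delta_X>\frac92 s^{2/3}$, so $|\widetilde\bx_j-\widetilde\bx_k|>\delta_X-2\rho>2\rho$.
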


The proof of \cref{thm:localization-modulus} is given in \cref{sec:localization-modulus}. 
		\section{Frame bounds}\label{sec:frame}
			
		To prove the admissibility of the MUSIC function, we require a uniform frame bounds, equivalently a Riesz sequence inequality, for the spherical Fourier atoms associated with the scatterer locations. The proof is based on a sparse spherical packing analysis.
			\begin{theorem}\label{thm:sparse-spherical-frame}
						Let
		$$
		X=\{\bm{x}_1,\dots,\bm{x}_s\}\subset B_1(0),\quad s\ge2.
		$$
			If
		\begin{equation}\label{eq:sparse-packing}
			\mu_X:=\frac92\frac{s^{2/3}}{\kappa\delta_X}<1,
		\end{equation}
		then for every $\bm \alpha=(\alpha_1,\ldots,\alpha_s)\in\C^s$,
		\begin{equation}\label{eq:sparse-frame}
			(1-\mu_X)\norm{\bm \alpha}^2
			\le
			\frac1{4\pi}\int_{\Sph}
			\left|\sum_{j=1}^s \alpha_j e^{i\kappa\bm\omega\cdot\bm x_j}\right|^2\dd\bm\omega
			\le
			(1+\mu_X)\norm{\bm \alpha}^2.
		\end{equation}
		Equivalently, the Gram matrix $G_X$ satisfies
		\begin{equation}\label{eq:eigenvalue-bounds}
			1-\mu_X
			\le
			\lambda_{\min}(G_X)
			\le
			\lambda_{\max}(G_X)
			\le
			1+\mu_X.
		\end{equation}
	\end{theorem}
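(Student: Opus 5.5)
The plan is to apply Gershgorin's theorem to $G_X$ and control the absolute off-diagonal row sum. Since $G_X=[K_\kappa(\bx_j-\bx_k)]$ has unit diagonal and $|K_\kappa(\bh)|\le 1/(\kappa|\bh|)$, every eigenvalue $\lambda$ of $G_X$ satisfies
\[
|\lambda-1|\le \max_j\sum_{k\ne j}\frac{1}{\kappa|\bx_j-\bx_k|}.
\]
The equivalence between \eqref{eq:sparse-frame} and \eqref{eq:eigenvalue-bounds} follows from $\bm\alpha^*G_X\bm\alpha=\norm{\Phi_X\bm\alpha}^2$. It therefore suffices to show
\[
\sum_{k\ne j}\frac{1}{|\bx_j-\bx_k|}\le \frac92\,\frac{s^{2/3}}{\delta_X}.
\]

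\textbf{Packing bound.} Fix $j$. Partition the other points into dyadic-type shells $S_n=\{k: n\delta_X\le |\bx_k-\bx_j|<(n+1)\delta_X\}$ for $n\ge1$. The balls $B_{\delta_X/2}(\bx_k)$ are disjoint, so a volume comparison gives
\[
\#\{k:|\bx_k-\bx_j|<r\}\le \frac{(r+\delta_X/2)^3}{(\delta_X/2)^3}.
\]
This is an $O((r/\delta_X)^3)$ count.

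\textbf{Rearrangement.} Given this count, the row sum $\sum_k |\bx_j-\bx_k|^{-1}$ with $s-1$ terms is maximized, subject to the cumulative counts, by packing points as close to $\bx_j$ as possible. This is a rearrangement or Abel-summation argument: write $\sum_k d_k^{-1}=\int_0^\infty N(r)\,r^{-2}\,dr$ with $N(r)=\min\{s-1,\#\{k:d_k<r\}\}$. Insert $N(r)\le\min\{s-1,\,C(r/\delta_X)^3\}$, where $C(r/\delta_X)^3$ stands for the volume count above, and note that $N(r)=0$ for $r<\delta_X$. Splitting the integral at $R$ with $C(R/\delta_X)^3=s$, i.e. $R\asymp s^{1/3}\delta_X$, gives
\[
\int_{\delta_X}^R C r\,\delta_X^{-3}\,dr+\int_R^\infty s\,r^{-2}\,dr \asymp \frac{R^2}{\delta_X^3}+\frac{s}{R}\asymp \frac{s^{2/3}}{\delta_X}.
\]
This is exactly the $s^{2/3}$ scaling.

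\textbf{Main obstacle: the constant $9/2$.} Getting $9/2$ requires care. The crude volume bound $(2r/\delta_X+1)^3$ has a lower-order term that must be absorbed. I would use the sharper count $N(r)\le (2r/\delta_X+1)^3-1$, valid for $r\ge\delta_X$, optimize the split point $R$, and use $s\ge2$ to dominate the lower-order pieces by $s^{2/3}$. If the constant does not close with that argument, I would refine the counting by using only the half-ball portion of each packing ball, or exploit a tighter kissing-type bound ($\le12$ neighbors at distance $<\sqrt2\delta_X$) for the first shell.
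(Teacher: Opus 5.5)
Your proposal is correct. It shares the paper's skeleton: Gershgorin on the unit-diagonal $G_X$, the bound $|K_\kappa(\bm h)|\le 1/(\kappa|\bm h|)$, and a volume-packing count. The only difference is how the count is summed, and the step you flag as the main obstacle is not actually one.

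The paper turns the volume count into a lower bound on ordered distances, $r_n\ge(n^{1/3}-1)\delta_X/2$. It combines this with $r_n\ge\delta_X$ (the first bound covers $n\ge 27$, the second $n\le 27$) to get the uniform estimate $r_n\ge\frac13\delta_X n^{1/3}$. Then $\sum_{n=1}^{s-1}n^{-1/3}\le\int_0^{s-1}t^{-1/3}\,dt\le\frac32 s^{2/3}$ gives exactly $3\cdot\frac32=\frac92$, with no case analysis. So $9/2$ is an artifact of the lossy factor $\frac13$.

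Your layer-cake version with the sharper count $N(r)\le(2r/\delta_X+1)^3-1$ also closes, with a better leading constant. Normalize $\delta_X=1$ and write $t=s^{1/3}$. For $t\ge 3$ the crossover is $R=(t-1)/2\ge 1$, and
\[
\int_1^R\Bigl(8r+12+\frac{6}{r}\Bigr)dr+\frac{s-1}{R}
=3t^2+6t-19+6\ln\frac{t-1}{2}
\le 3t^2+9t-28\le\frac92\,t^2 .
\]
The middle inequality uses $\ln x\le x-1$. The last holds because $\frac32t^2-9t+28$ has negative discriminant. For $t<3$, one has $N(r)\le s-1$ on $[1,\infty)$, so the row sum is at most $s-1<3s^{2/3}$.

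Hence the half-ball and kissing-number refinements are unnecessary. Your layer-cake route yields the sharper asymptotic constant $3$. The paper's ordered-distance lemma gives a one-line derivation of the stated $9/2$, and the paper reuses it (with constant $\frac14$) for the nonlocal energy bounds.
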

	\begin{remark} In contrast to the sufficient condition \eqref{eq:sparse-packing}, 
\cref{app:lower-example} shows that $\kappa\delta_X\gtrsim s^{1/6}$ is the necessary worst-case packing condition for the {\bf lower}-frame bound while \cref{app:upper-example} shows that $\kappa\delta_X\gtrsim s^{1/3}$ is the necessary worst-case packing condition for the {\bf upper}-frame bound, both by analyzing grid-restricted configurations. 

Therefore, a two-sided arbitrary-cloud spectral theorem needs at least the \(s^{1/3}\) scale in the worst case.
The interval between exponents \(1/3\) and \(2/3\) remains open for two-sided spectral conditioning.

\end{remark}

	\begin{remark}
	 A $\delta_X$-separated set in $B_1(0)\subset\R^3$ can have at most
	$
	s\lesssim \delta_X^{-3}
	$
	points.  If the point cloud is volumetrically saturated, then
	\[
	s\asymp \delta_X^{-3}.
	\]
	Therefore
	\[
	s^{2/3}\asymp \delta_X^{-2},
	\]
	and the sparse condition $\mu_X<1$
		becomes
	\[
	\kappa\delta_X\gtrsim \delta_X^{-2},
	\]
	which is equivalent to
	\beq
	\label{eq:separation-condition}
	\kappa\delta_X^3\gtrsim 1.
	\eeq
\end{remark}
	
\subsection{Coherence row sum estimate}	
	
	\begin{lemma}\label{lem:sparse-row-sum}
		The coherence row sum defined as
		\[
		\sigma_X:=
		\max_{1\le j\le s}
		\sum_{k\ne j}\abs{K_\kappa(\bm{x}_j-\bm{x}_k)}
		\]
		satisfies the bound
		\begin{equation}\label{eq:sparse-row-sum}
			\sigma_X       \le \mu_X. 
		\end{equation}
	\end{lemma}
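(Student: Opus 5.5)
Fix a row index $j$ and bound $\sum_{k\ne j}|K_\kappa(\bx_j-\bx_k)|$ using only $|K_\kappa(\bh)|\le \min\{1,(\kappa|\bh|)^{-1}\}$. Since every $\bx_k$ with $k\ne j$ satisfies $|\bx_j-\bx_k|\ge\delta_X$, this gives
\[
\sum_{k\ne j}|K_\kappa(\bx_j-\bx_k)|\le \frac1\kappa\sum_{k\ne j}\frac{1}{|\bx_j-\bx_k|}.
\]
Because the bound depends only on distances to $\bx_j$, the task reduces to a worst-case packing estimate. We must maximize $\sum_{k}1/r_k$ over $s-1$ points that are pairwise $\delta_X$-separated and lie at distance at least $\delta_X$ from $\bx_j$. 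This is a rearrangement problem: the sum is largest when the points are packed as close to $\bx_j$ as the separation allows.

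To quantify this, put disjoint balls $B_{\delta_X/2}(\bx_k)$ around all points, including $\bx_j$, and sort the neighbours by distance $r_{(1)}\le r_{(2)}\le\cdots$. The $m$ nearest neighbours together with $\bx_j$ have disjoint half-balls contained in $B_{r_{(m)}+\delta_X/2}(\bx_j)$. Comparing volumes gives $(m+1)(\delta_X/2)^3\le (r_{(m)}+\delta_X/2)^3$, hence
\[
r_{(m)}\ge \frac{\delta_X}{2}\big((m+1)^{1/3}-1\big),
\]
and also $r_{(m)}\ge\delta_X$. The difficulty is that combining these two lower bounds crudely gives a constant worse than $9/2$. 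I would therefore use the cleaner bound $r_{(m)}\ge c\,\delta_X m^{1/3}$ with $c$ chosen so that the resulting sum matches the paper's constant.

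Next, sum the ordered bound:
\[
\sum_{m=1}^{s-1}\frac1{r_{(m)}}\le \frac{1}{c\delta_X}\sum_{m=1}^{s-1}m^{-1/3}\le\frac{1}{c\delta_X}\cdot\frac32 (s-1)^{2/3}\le \frac{3}{2c}\,\frac{s^{2/3}}{\delta_X}.
\]
Here $\sum_{m\le n}m^{-1/3}\le\int_0^n t^{-1/3}dt=\tfrac32 n^{2/3}$. Reaching $\mu_X=\tfrac92 s^{2/3}/(\kappa\delta_X)$ requires $c=1/3$, that is, $r_{(m)}\ge \delta_X m^{1/3}/3$.

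The main obstacle is verifying $r_{(m)}\ge \delta_X m^{1/3}/3$ for every $m\ge1$. My approach is to combine the two available bounds. For small $m$ use $r_{(m)}\ge\delta_X$, which implies the claim when $m\le 27$. For larger $m$ use the volume bound, and check that $\tfrac12((m+1)^{1/3}-1)\ge m^{1/3}/3$ once $m^{1/3}\ge 3$. At $m=27$ the left side is about $0.52$ and the right side is $1$, so the inequality fails there; since $\tfrac12 m^{1/3}$ overtakes $\tfrac13 m^{1/3}+\tfrac12$ only when $m^{1/3}\ge3$, the crossover needs sharpening. I would sharpen it by placing all of $B_{\delta_X/2}(\bx_j)$ inside the ball and using the exclusion of the inner region of radius $\delta_X/2$ more carefully. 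An alternative is to count neighbours in dyadic shells $\{2^\ell\delta_X\le r<2^{\ell+1}\delta_X\}$, where there are at most $\lesssim 8^{\ell}$ points per shell, and optimise the constant there. Taking the maximum over $j$ then gives $\sigma_X\le\mu_X$.
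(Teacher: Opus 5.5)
Your route is the paper's: the bound $|K_\kappa(\bh)|\le 1/(\kappa|\bh|)$, the ordered-distance estimate $r_{(m)}\ge\tfrac13\delta_X m^{1/3}$ (the paper's Lemma~\ref{lem:distance-ordering}), and the comparison $\sum_{m\le s-1}m^{-1/3}\le\tfrac32 s^{2/3}$, which together give the constant $9/2$. Your volume bound, which adds the ball around $\bx_j$ and so uses $m+1$ balls, is slightly stronger than the paper's, but that is immaterial. The only defect is your last paragraph. There you conclude that $r_{(m)}\ge\tfrac13\delta_X m^{1/3}$ does not follow from your two bounds and needs sharpening, and as written this leaves the central step unproved. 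That conclusion comes from an arithmetic slip. At $m=27$ the volume bound gives $\tfrac12\bigl(28^{1/3}-1\bigr)\approx 1.018\ge 1$. Your value $0.52$ is $\tfrac12\,28^{1/3}-1$, i.e.\ a misplaced parenthesis.

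Your own next observation already closes the step. The inequality $\tfrac12\bigl(m^{1/3}-1\bigr)\ge\tfrac13 m^{1/3}$ holds exactly when $m^{1/3}\ge 3$. The trivial bound $r_{(m)}\ge\delta_X\ge\tfrac13\delta_X m^{1/3}$ holds exactly when $m^{1/3}\le 3$. The two ranges meet at $m=27$, so
\[
r_{(m)}\ \ge\ \delta_X\max\Bigl\{1,\tfrac12\bigl(m^{1/3}-1\bigr)\Bigr\}\ \ge\ \tfrac13\,\delta_X\, m^{1/3}\qquad\text{for all } m\ge 1.
\]
This works even with the paper's weaker volume bound that omits the extra ball. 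This two-case check is precisely the paper's argument. No sharpening and no dyadic-shell count is needed; the dyadic route would also make the exact constant $9/2$ harder to recover. Replace your final paragraph with the display above and the proof is complete.
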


	\begin{remark}
	 \cref{app:row-sum-example}  proves that the exponent \(2/3\) is optimal for the absolute coherence row-sum estimate. Consequently, the \(s^{2/3}\) scaling cannot be improved within the present absolute-row-sum/Gershgorin argument approach to the spherical frame bounds. No spectral necessity or sharpness of the constant \(9/2\) is claimed.

\end{remark}
	
	\begin{proof}
		For $j\ne k$, the kernel bound from \eqref{eq:spherical-kernel} gives
		\[
		\abs{K_\kappa(\bm x_j-\bm x_k)}
		=\left|\frac{\sin(\kappa\abs{\bm x_j-\bm x_k})}{\kappa\abs{\bm x_j-\bm x_k}}\right|
		\le
		\frac1{\kappa\abs{\bm x_j-\bm x_k}}.
		\]

Next we need the following sparse packing estimate. 		
			\begin{lemma}\label{lem:distance-ordering}
		Fix $j\in\{1,\ldots,s\}$ and order the remaining points by increasing distance from $\bm x_j$:
		\[
		r_1\le r_2\le\cdots\le r_{s-1},
		\qquad
		r_n=\abs{\bm x_j-\bm x_{k_n}}.
		\]
		Then 
		\begin{equation}\label{eq:rn-lower}
			r_n\ge \frac13\delta_X n^{1/3},
			\qquad n=1,\ldots,s-1.
		\end{equation}
	\end{lemma}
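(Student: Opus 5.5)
This is a volume-packing argument in $\mathbb R^3$. Fix $j$ and $n\in\{1,\dots,s-1\}$, and set $R:=r_n$. The $n+1$ points $\bx_j,\bx_{k_1},\dots,\bx_{k_n}$ all lie in the closed ball $\overline{B_R(\bx_j)}$. Pairwise they are at least $\delta_X$ apart.

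Hence the open balls $B_{\delta_X/2}(\bx_j)$ and $B_{\delta_X/2}(\bx_{k_m})$, $m=1,\dots,n$, are pairwise disjoint. Each of them is contained in $B_{R+\delta_X/2}(\bx_j)$. Comparing volumes (the factor $4\pi/3$ cancels) gives
\[
(n+1)\Bigl(\frac{\delta_X}{2}\Bigr)^3\le \Bigl(R+\frac{\delta_X}{2}\Bigr)^3 ,
\qquad\text{so}\qquad
R\ge \frac{\delta_X}{2}\bigl((n+1)^{1/3}-1\bigr).
\]
No containment in $B_1(0)$ is needed here; only the separation is used.

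It remains to show $\tfrac12\bigl((n+1)^{1/3}-1\bigr)\ge \tfrac13 n^{1/3}$ for every integer $n\ge1$. This is the only delicate step, because for small $n$ the volume bound is weak: at $n=1$ it gives $\tfrac12(2^{1/3}-1)\approx0.13<\tfrac13$. So I would split into two cases.

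\textbf{Small $n$.} I use the trivial bound $r_n\ge r_1\ge\delta_X$, which follows from the definition of $\delta_X$. It gives $r_n\ge\tfrac13\delta_X n^{1/3}$ whenever $n^{1/3}\le 3$, i.e.\ for all $n\le 27$.

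\textbf{Large $n$.} For $n\ge 27$ I use the volume bound. Since $(n+1)^{1/3}>n^{1/3}$, it suffices that $\tfrac12(n^{1/3}-1)\ge\tfrac13 n^{1/3}$, which is equivalent to $n^{1/3}\ge3$, i.e.\ $n\ge27$.

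The two ranges cover all $n\ge1$, which proves \eqref{eq:rn-lower}. If one wanted a sharper constant, the obstacle is exactly this matching at moderate $n$. For the stated constant $\tfrac13$, the crude split at $n=27$ suffices, so there is no real difficulty.
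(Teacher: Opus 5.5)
Your proof is correct and is essentially the paper's argument. The paper uses the same disjoint-ball volume comparison; it counts only the $n$ balls around $\bx_{k_1},\dots,\bx_{k_n}$, whereas your inclusion of the ball around $\bx_j$ gives $n+1$, which is harmless. It then combines the resulting bound with the trivial bound $r_n\ge\delta_X$ in a single line, and your explicit split at $n=27$ spells out that step.
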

	
	\begin{proof}
		The balls
		\[
		B(\bm x_{k_m},\delta_X/2),
		\qquad m=1,\ldots,n,
		\]
		are disjoint by the separation assumption.  If $r_n$ is the distance from $\bm x_j$ to the $n$-th nearest point, then these $n$ disjoint balls are all contained in
		\[
		B(\bm x_j,r_n+\delta_X/2).
		\]
		Comparing volumes gives
		\[
		n\cdot \frac{4\pi}{3}\left(\frac{\delta_X}{2}\right)^3
		\le
		\frac{4\pi}{3}(r_n+\delta_X/2)^3.
		\]
		Thus
		\[
		r_n\geq (n^{1/3}-1)\frac{\delta_X}{2}.
		\]
		Since $r_n\ge \delta_X$ for all $n\ge 1$, we have 
		\[
		r_n\ge \frac13\delta_X n^{1/3}. 
		\]
	\end{proof}

		To proceed, fix $j$ and order the other points by distance as in Lemma~\ref{lem:distance-ordering}.  Then
		\[
		\sum_{k\ne j}\abs{K_\kappa(\bm x_j-\bm x_k)}
		\le
		\frac1\kappa\sum_{n=1}^{s-1}\frac1{r_n}.
		\]
		Using \eqref{eq:rn-lower},
		\[
		\sum_{n=1}^{s-1}\frac1{r_n}
		\le
		3\frac{1}{\delta_X}\sum_{n=1}^{s-1}n^{-1/3}.
		\]
		Since
		\[
		\sum_{n=1}^{s-1}n^{-1/3}
		\le
		\int_0^{s-1} t^{-1/3}\dd t
		=
		\frac{3}{2} (s-1)^{2/3}
		\leq \frac{3}{2} s^{2/3},
		\]
		we obtain
		\[
		\sum_{k\ne j}\abs{K_\kappa(\bm x_j-\bm x_k)}
		\le
		\frac{9}{2}\frac{s^{2/3}}{\kappa\delta_X}.
		\]
		Taking the maximum over $j$ proves the claim.
	\end{proof}
	
	\subsection{Proof of the spherical frame theorem}
	\begin{proof}
		Expanding the squared norm yields
		\begin{align*}
			\frac1{4\pi}\int_{\Sph}
			\left|\sum_{j=1}^s \alpha_j e^{i\kappa\bm\omega\cdot\bm x_j}\right|^2\dd\bm\omega
		=
			\sum_{j,k=1}^s \alpha_j\overline{\alpha_k}
			K_\kappa(\bm x_j-\bm x_k) =
			\bm \alpha^*G_X\bm \alpha.
		\end{align*}
		 By Lemma~\ref{lem:sparse-row-sum},
		\[
		\sigma_X
		\le
		\mu_X. 
		\]
		Gershgorin's theorem implies that all eigenvalues of $G_X$ lie in the interval
		$
		[1-\sigma_X,1+\sigma_X].
		$
		Hence \eqref{eq:eigenvalue-bounds} follows.  Finally, since $
		\bm\alpha^*G_X\bm\alpha$
		is the integral in \eqref{eq:sparse-frame}, the frame inequality follows immediately.
	\end{proof}

	\section{Kernel derivatives and nonlocal energies}\label{sec:energy}
	
	The local geometry of the MUSIC wells is determined by the Taylor expansion of $K_\kappa$ at the origin:
	\[
	K_\kappa(\bm{h})
	=1-\frac{\kappa^2\abs{\bm{h}}^2}{6}
	+\frac{\kappa^4\abs{\bm{h}}^4}{120}
	+O(\kappa^6\abs{\bm{h}}^6).
	\]
	Hence
	\begin{equation}
		\Psi_\kappa:=-\nabla^2K_\kappa(\bm{0})=\frac{\kappa^2}{3}\Id_3.\label{eq:5.1}
	\end{equation}
		\begin{lemma}
		\label{lem:kernel_derivatives_matched}
		For 
		\[
		\abs{\bm{h}}\le \rho:=\frac{\gamma}{\kappa}, 
		\]
	 we have
		\begin{eqnarray}
		\label{eq:5.2}
		\abs{\nabla K_\kappa(\bm{h})}&\le &\frac{1}{3}\kappa^2\abs{\bm{h}},\\
\label{eq:5.3}
	 \norm{\nabla^2K_\kappa(\bm{h})+\Psi_\kappa}
		&\le& \frac{1}{10}\gamma^2 \kappa^2,\\
		\abs{\Delta K_\kappa(\bm{0})}&=&\kappa^2.
		\end{eqnarray}
	\end{lemma}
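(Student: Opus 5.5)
The plan is to exploit the fact that $K_\kappa$ is radial: $K_\kappa(\bm h)=f(\kappa|\bm h|)$ with $f(t)=\sin t/t$. All three claims then reduce to scalar inequalities for $f$, $f'$ and $f''$ on $0\le t\le \gamma$. Set $t=\kappa|\bm h|\le\gamma$ and $\hat{\bm h}=\bm h/|\bm h|$. The standard radial calculus gives
\[
\nabla K_\kappa(\bm h)=\kappa f'(t)\,\hat{\bm h},\qquad
\nabla^2K_\kappa(\bm h)=\kappa^2 f''(t)\,\hat{\bm h}\hat{\bm h}^T+\kappa^2\frac{f'(t)}{t}\bigl(I_3-\hat{\bm h}\hat{\bm h}^T\bigr),
\]
with the obvious limits at $\bm h=\bm 0$. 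Consequently $\nabla^2K_\kappa(\bm h)+\Psi_\kappa$ is symmetric, with eigenvalue $\kappa^2\bigl(f''(t)+\tfrac13\bigr)$ in the radial direction and eigenvalue $\kappa^2\bigl(f'(t)/t+\tfrac13\bigr)$, of multiplicity two, in the tangential directions. Its spectral norm is the larger of the two absolute values.

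To bound these scalars cleanly, I will use the integral representation
\[
f(t)=\frac12\int_{-1}^1\cos(tu)\,du,
\]
which is just the spherical average computed along the axis $\hat{\bm h}$. Differentiating under the integral gives three identities:
\[
f'(t)=-\frac12\int_{-1}^1u\sin(tu)\,du,\qquad
f''(t)+\frac13=\frac12\int_{-1}^1u^2\bigl(1-\cos(tu)\bigr)\,du,
\]
\[
\frac{f'(t)}{t}+\frac13=\frac12\int_{-1}^1u^2\Bigl(1-\frac{\sin(tu)}{tu}\Bigr)du.
\]
The elementary bounds $|\sin x|\le|x|$, $0\le 1-\cos x\le x^2/2$ and $0\le 1-\frac{\sin x}{x}\le x^2/6$ then yield three estimates:
\[
|f'(t)|\le \frac t3,\qquad 0\le f''(t)+\frac13\le\frac{t^2}{10},\qquad 0\le \frac{f'(t)}{t}+\frac13\le\frac{t^2}{30}.
\]
The first gives $|\nabla K_\kappa(\bm h)|\le \kappa\cdot\kappa|\bm h|/3$, which is \eqref{eq:5.2}. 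The second and third give $\|\nabla^2K_\kappa(\bm h)+\Psi_\kappa\|_2\le \kappa^2 t^2/10\le \gamma^2\kappa^2/10$, which is \eqref{eq:5.3}.

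Finally, $\Delta K_\kappa(\bm 0)=\operatorname{tr}\nabla^2K_\kappa(\bm 0)=-\operatorname{tr}\Psi_\kappa=-\kappa^2$ by \eqref{eq:5.1}, so $|\Delta K_\kappa(\bm 0)|=\kappa^2$. Alternatively, this follows directly from $\Delta e^{\ii\kappa\bom\cdot\bm h}=-\kappa^2e^{\ii\kappa\bom\cdot\bm h}$.

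The main obstacle is bookkeeping rather than depth. I need to verify the radial Hessian decomposition, including continuity at $\bm h=\bm 0$, where both eigenvalues tend to $-1/3$. I also need to check the constants in the integral identities, in particular that $\frac12\int_{-1}^1u^2\,du=\frac13$ and $\frac12\int_{-1}^1u^4\,du=\frac15$; these produce the constants $1/10$ and $1/30$.
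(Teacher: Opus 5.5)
Your proof is correct and gets the same constants as the paper, but the decomposition is different. You use radial symmetry to diagonalize $\nabla^2K_\kappa(\bm h)+\Psi_\kappa$ explicitly, with radial eigenvalue $\kappa^2\bigl(f''(t)+\tfrac13\bigr)$ and double tangential eigenvalue $\kappa^2\bigl(f'(t)/t+\tfrac13\bigr)$, and you then bound one-dimensional integrals in $u\in[-1,1]$.

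The paper never writes down the radial Hessian. It differentiates $K_\kappa(\bm h)=\frac1{4\pi}\int_{\mathbb S^2}\cos(\kappa\bm\omega\cdot\bm h)\,d\bm\omega$ directly and notes that $\nabla^2K_\kappa(\bm h)+\Psi_\kappa=\frac{\kappa^2}{4\pi}\int_{\mathbb S^2}\bigl(1-\cos(\kappa\bm\omega\cdot\bm h)\bigr)\bm\omega\bm\omega^T\,d\bm\omega\succeq0$. It then bounds the quadratic form in an arbitrary unit direction $\bm v$ using three ingredients:
\begin{itemize}
\item the inequality $1-\cos x\le x^2/2$;
\item Cauchy--Schwarz on $\mathbb S^2$;
\item the sphere moments $\frac1{4\pi}\int_{\mathbb S^2}(\bm\omega\cdot\bm v)^2\,d\bm\omega=\frac13$ and $\frac1{4\pi}\int_{\mathbb S^2}(\bm\omega\cdot\bm v)^4\,d\bm\omega=\frac15$.
\end{itemize}
The gradient is handled the same way, with $|\sin x|\le|x|$. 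Your $u$ is just $\bm\omega\cdot\hat{\bm h}$, so the underlying inequalities coincide. Cauchy--Schwarz is an equality in the extremal direction $\bm v=\hat{\bm h}$, which is why both routes land on $\frac13$ and $\frac1{10}$.

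Your route gives finer spectral information. Both eigenvalues are nonnegative, the tangential ones are at most $\kappa^2t^2/30$, and every bound is sharp to leading order in $t$. The paper's route is coordinate-free and needs no limiting argument at $\bm h=\bm 0$. It also reuses exactly the sphere moments that appear in \cref{lem:perturbation} and \cref{lem:music-global-lipschitz}, and it does not use that $K_\kappa$ is radial.

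For the Laplacian, your $-\operatorname{tr}\Psi_\kappa$ and the paper's identity $\Delta K_\kappa=-\kappa^2K_\kappa$ are equivalent.
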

	
			\begin{proof}
		First, observe that	
		$$
		K_\kappa(\bm{h})
		=
		\frac{1}{4\pi}\int_{S^2}e^{i\kappa\bm{\omega}\cdot\bm{h}}\,d\bm{\omega}
		=
		\frac{1}{4\pi}\int_{S^2}\cos(\kappa\bm{\omega}\cdot\bm{h})\,d\bm{\omega}.
		$$
		
		Fix a unit vector $\bm{v}\in\mathbb{R}^3$, we have
		
		$$
		\partial_{\bm{v}}K_\kappa(\bm{h})
		=
		-\frac{\kappa}{4\pi}\int_{S^2}(\bm{\omega}\cdot\bm{v})\sin(\kappa\bm{\omega}\cdot\bm{h})\,d\bm{\omega}.
		$$
		
		Since $|\sin t|\leq t$, we get by Cauchy-Schwartz inequality that 
		
		$$
		\begin{aligned}
			|\partial_{\bm{v}}K_\kappa(\bm{h})|
			&\leq
			\frac{\kappa^2}{4\pi}\int_{S^2}|\bm{\omega}\cdot\bm{v}|\,|\bm{\omega}\cdot\bm{h}|\,d\bm{\omega}
			\\
			&\leq
			\frac{\kappa^2}{4\pi}
			\left(\int_{S^2}|\bm{\omega}\cdot\bm{v}|^2\,d\bm{\omega}\right)^{1/2}
			\left(\int_{S^2}|\bm{\omega}\cdot\bm{h}|^2\,d\bm{\omega}\right)^{1/2}
			\\
			&=
			\frac{\kappa^2}{4\pi}
			\left(\frac{4\pi|\bm{v}|^2}{3}\right)^{1/2}
			\left(\frac{4\pi|\bm{h}|^2}{3}\right)^{1/2}\leq
			\frac{\kappa^2}{3}|\bm{h}|.
		\end{aligned}
		$$
		
		Thus, $|\nabla K_\kappa(\bm{h})|\leq \frac{1}{3}\kappa^2|\bm{h}|$.
		
		Next, differentiating gives
		
		$$
		\nabla^2K_\kappa(\bm{h})
		=
		-\frac{\kappa^2}{4\pi}\int_{S^2}\cos(\kappa\bm{\omega}\cdot\bm{h})\bm{\omega}\bm{\omega}^{T}\,d\bm{\omega}.
		$$
		
		At $\bm{h}=0$,
		
		$$
		\begin{aligned}
			\nabla^2K_\kappa(0)
			&=
			-\frac{\kappa^2}{4\pi}\int_{S^2}\bm{\omega}\bm{\omega}^{T}\,d\bm{\omega}
			=
			-\frac{\kappa^2}{3}I_3.
		\end{aligned}
		$$
		
		Thus,
		
		$$
		\nabla^2K_\kappa(\bm{h})+\Psi_\kappa
		=
		\frac{\kappa^2}{4\pi}\int_{S^2}\bigl(1-\cos(\kappa\bm{\omega}\cdot\bm{h})\bigr)\bm{\omega}\bm{\omega}^{T}\,d\bm{\omega}.
		$$
		
		Since this matrix is positive semi-definite, we have
		
		$$
		\|\nabla^2K_\kappa(\bm{h})+\Psi_\kappa\|_2
		=
		\sup_{|\bm{v}|=1}\bm{v}^{T}\bigl(\nabla^2K_\kappa(\bm{h})+\Psi_\kappa\bigr)\bm{v}.
		$$
		
		For a unit vector $\bm{v}$,
		
		$$
		\bm{v}^{T}\bigl(\nabla^2K_\kappa(\bm{h})+\Psi_\kappa\bigr)\bm{v}
		=
		\frac{\kappa^2}{4\pi}\int_{S^2}\bigl(1-\cos(\kappa\bm{\omega}\cdot\bm{h})\bigr)(\bm{\omega}\cdot\bm{v})^2\,d\bm{\omega}.
		$$
		
		Since $0\leq 1-\cos t\leq \frac{t^2}{2}$, we have
		
		$$
		\bm{v}^{T}\bigl(\nabla^2K_\kappa(\bm{h})+\Psi_\kappa\bigr)\bm{v}
		\leq
		\frac{\kappa^4}{8\pi}\int_{S^2}(\bm{\omega}\cdot\bm{v})^2(\bm{\omega}\cdot\bm{h})^2\,d\bm{\omega}.
		$$
Applying the Cauchy-Schwartz inequality as above we have  	
		\begin{eqnarray}
		\int_{S^2}(\bm{\omega}\cdot\bm{v})^2(\bm{\omega}\cdot\bm{h})^2\,d\bm{\omega}
		&\le&
			\frac{4\pi \bm h^2}{5},
\end{eqnarray}
and hence 
		$$
		\begin{aligned}
			\|\nabla^2K_\kappa(\bm{h})+\Psi_\kappa\|_2
			&\leq
			\frac{1}{10}\kappa^4|\bm{h}|^2.
		\end{aligned}
		$$
		
		If $|\bm{h}|\leq \gamma/\kappa$ then
		
		$$
		\|\nabla^2K_\kappa(\bm{h})+\Psi_\kappa\|_2
		\leq
		\frac{1}{10}\gamma^2\kappa^2.
		$$
		We also have		
		$$
		\begin{aligned}
			\Delta K_\kappa(\bm{h})
			&=
			-\frac{\kappa^2}{4\pi}\int_{S^2}e^{i\kappa\bm{\omega}\cdot\bm{h}}\,d\bm{\omega}
			=
			-\kappa^2K_\kappa(\bm{h})
		\end{aligned}
		$$
		 and hence $
		|\Delta K_\kappa(0)|=\kappa^2.
		$
	\end{proof}
		
	We now isolate the nonlocal energy terms which control the cumulative influence of all scatterers except the locally closest one.  For each $j=1,\dots, s$, define
	\beq
	E_0&:=&
	\sup_{\bm{z}\in B_1(0)}
	\sum_{j:\,\abs{\bm{z}-\bm{x}_j}\ge \delta_X/2}
	\abs{K_\kappa(\bm{z}-\bm{x}_j)}^2,\\
	E_1&:=&
	\sup_{\bm{z}\in B_1(0)}
	\sum_{j:\,\abs{\bm{z}-\bm{x}_j}\ge \delta_X/2}
	\abs{\nabla K_\kappa(\bm{z}-\bm{x}_j)}^2\\
		E_2
		&:=&
		\sup_{\bm z\in B_1(0)}
		\sum_{j:\,|\bm z-\bm x_j|\geq \delta_X/2}
		|\nabla^2 K_\kappa(\bm z-\bm x_j)|^2 .
		\eeq
\begin{lemma}		\label{lem:nonlocal_energy_matched}	\label{lem-second-derivative-nonlocal-energy-estimate}
If \(\kappa\delta_X\ge2\), then following estimates hold:		\[
		E_0\le \frac{\nu_X}{c_\nu^2},
		\qquad
		{E_1}\le\nu_X \kappa^2,\qquad	
		E_2
		\leq
		\frac{4}{c_\nu^2}\nu_X \kappa^4. 
		\]
	\end{lemma}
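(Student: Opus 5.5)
The plan is to prove all three bounds with one scheme: a pointwise radial decay estimate for each of $K_\kappa$, $\nabla K_\kappa$ and $\nabla^2K_\kappa$, followed by a single packing sum modeled on \cref{lem:distance-ordering}. Write $t=\kappa r$ with $r=|\bz-\bx_j|$ and $f(t)=\sinc(t)$, so that $K_\kappa(\bh)=f(\kappa|\bh|)$. I aim for the pointwise bounds
\[
|K_\kappa(\bh)|\le \frac{1}{\kappa|\bh|},\qquad
|\nabla K_\kappa(\bh)|\le \frac{c_\nu}{|\bh|},\qquad
\|\nabla^2K_\kappa(\bh)\|_2\le \frac{2\kappa}{|\bh|},
\qquad \kappa|\bh|\ge1 .
\]
The restriction $\kappa|\bh|\ge 1$ is exactly what the hypothesis provides: the sums only involve $|\bz-\bx_j|\ge\delta_X/2$, and $\kappa\delta_X\ge2$.

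The first bound is $|\sin t|\le1$. For the second, $\nabla K_\kappa(\bh)=\kappa f'(t)\,\bh/|\bh|$ and $f'(t)=(t\cos t-\sin t)/t^2$. By the definition of $c_\nu$ this gives $|f'(t)|\le c_\nu/t$, hence $|\nabla K_\kappa|\le c_\nu/|\bh|$.

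For the Hessian, the eigenvalues are $\kappa^2f''(t)$ in the radial direction and $\kappa^2 f'(t)/t$ (double) in the tangential directions. The tangential ones satisfy $|f'(t)/t|\le c_\nu/t^2\le 2/t$ for $t\ge1$. For the radial one, write
\[
f''(t)=-\frac{(t^2-2)\sin t+2t\cos t}{t^3}.
\]
The numerator has amplitude $\sqrt{t^4+4}$, which is at most $2t^2$ once $t^4\ge4/3$. On the short interval $1\le t\le(4/3)^{1/4}$ I would instead bound the numerator directly, e.g. by monotonicity or an explicit evaluation; at $t=1$ it equals $2\cos1-\sin1\approx0.24$. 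I expect this radial second-derivative estimate near $t=1$ to be the only fiddly step. If the constant $2$ turns out to be too tight there, I would fall back on a crude but explicit interval check.

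Next comes the packing step. Fix $\bz\in B_1(0)$ and list the points with $|\bz-\bx_j|\ge\delta_X/2$ in order of increasing distance, $r_1\le r_2\le\cdots$. The $n$ balls $B(\bx_{k_m},\delta_X/2)$, $m\le n$, are disjoint and lie in $B(\bz,r_n+\delta_X/2)$. Comparing volumes as in \cref{lem:distance-ordering} gives $r_n\ge (n^{1/3}-1)\delta_X/2$. Combining this with $r_n\ge\delta_X/2$ yields $r_n\ge \tfrac14\delta_X n^{1/3}$: use the first bound if $n^{1/3}\ge2$ and the second otherwise. Consequently
\[
\sum_n \frac1{r_n^2}\le \frac{16}{\delta_X^2}\sum_{n=1}^{s}n^{-2/3}\le \frac{16}{\delta_X^2}\cdot 3s^{1/3}=\frac{48\,s^{1/3}}{\delta_X^2}.
\]

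Finally I plug in the pointwise bounds. The kernel bound gives $E_0\le 48s^{1/3}/(\kappa^2\delta_X^2)=\nu_X/c_\nu^2$. The gradient bound gives $E_1\le 48c_\nu^2s^{1/3}/\delta_X^2=\nu_X\kappa^2$. The Hessian bound gives $E_2\le 4\kappa^2\cdot48s^{1/3}/\delta_X^2=4\nu_X\kappa^4/c_\nu^2$. These are exactly the three claims, since $\nu_X=48c_\nu^2 s^{1/3}/(\kappa^2\delta_X^2)$.
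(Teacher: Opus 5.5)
Your proposal is correct and follows the paper's proof essentially step for step. It uses the same pointwise decay bounds $1/(\kappa|\bh|)$, $c_\nu/|\bh|$ and $2\kappa/|\bh|$, the same packing estimate $r_n\ge\frac14\delta_X n^{1/3}$ (which you justify more explicitly than the paper does), and the same bound $\sum_{n\le s}n^{-2/3}\le 3s^{1/3}$. The only variation is the Hessian bound: the paper avoids your case split near $t=1$ by writing $f''=-\frac2t f'-f$ with $|f'|\le\frac12$ and $|f|\le\frac1t$, while your interval check also closes, since $N(t)=(t^2-2)\sin t+2t\cos t$ satisfies $N'(t)=t^2\cos t$ and hence $|N(t)|\le t^3/3$.
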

	
	\begin{proof}
  Fix $\bm{z}\in B_1(0)$ and order the points satisfying $\abs{\bm{z}-\bm{x}_j}\ge \delta_X/2$ by increasing distance from $\bm{z}$:
		\[
		r_1\le r_2\le \cdots \le r_N,
		\qquad r_n=\abs{\bm{z}-\bm{x}_{j_n}},
		\qquad N\le s.
		\]
		Since the points of $X$ are $\delta_X$-separated, the same packing argument as in \cref{lem:distance-ordering} gives
		\[
		r_n\ge c\delta_X n^{1/3},
		\qquad n=1,\ldots,N, \quad c= \frac{1}{4}.
		\]
		The trivial bound
		\[
			|K_\kappa(\bm{h})|=\left|
		\frac{\sin(\kappa\abs{\bm{h}})}{\kappa\abs{\bm{h}}}\right|
		\le \frac{1}{\kappa\abs{\bm{h}}}
		\]
		therefore gives
		\[        \sum_{j:\,\abs{\bm{z}-\bm{x}_j}\ge \delta_X/2}
		\abs{K_\kappa(\bm{z}-\bm{x}_j)}^2
		\le
		\frac{1}{\kappa^2}\sum_{n=1}^{N}\frac1{r_n^2} 
		\le
		\frac{1}{c^2\kappa^2\delta_X^2}\sum_{n=1}^{s} n^{-2/3}
		\le
		48\frac{s^{1/3}}{\kappa^2\delta_X^2}.
		\]
		Thus, taking the supremum over $\bm{z}$ proves the estimate for $E_0$.
		
		For the gradient, differentiating
		\[
		K_\kappa(\bm h)
		=
		f(\kappa |\bh|),
		\qquad
		f(t)=\frac{\sin t}{t}
		\]
		yields the pointwise bound
		\[
		\abs{\nabla K_\kappa(\bm{h})}
		\le \left| \frac{\kappa |\bm{h}|\cos(\kappa|\bm{h}|)-\sin(\kappa|\bm{h}|)}{\kappa |\bm{h}|} \right|\frac{1}{\abs{\bm{h}}} \leq c_\nu\frac{1}{\abs{\bm{h}}} ,
		\qquad \bm{h}\ne \bm{0},
		\]
		where $c_\nu \approx 1.0631$ is the global maximum of the function $t\mapsto \abs{\frac{t\cos(t)-\sin(t)}{t}}.$
		Thus the same ordered-distance estimate gives
		\begin{align*}
			\sum_{j:\,\abs{\bm{z}-\bm{x}_j}\ge \delta_X/2}
			\abs{\nabla K_\kappa(\bm{z}-\bm{x}_j)}^2
			&\le
			c_\nu^2\sum_{n=1}^{N}\frac1{r_n^2} \le
			\frac{c_\nu^2}{c^2\delta_X^2}\sum_{n=1}^{s}n^{-2/3}
			\le
			\frac{3c_\nu^2}{c^2}\frac{s^{1/3}}{\delta_X^2}=48c_\nu^2\frac{s^{1/3}}{\delta_X^2}.
		\end{align*}
		Taking the supremum proves the estimate for $E_1$ with  $C_{\rm E} =48c_\nu^2$. 

	Now let us turn to $E_2$.	We have
		\[
		\nabla^2K_\kappa(\bm h)
		=
		\kappa^2 f''(\kappa |\bh|)\bm e\bm e^T
		+
		\kappa^2
		\frac{f'(\kappa |\bh|)}{\kappa |\bh|}
		\left(I-\bm e\bm e^T\right),\quad	\bm e=\frac{\bm h}{|\bm h|}.
				\]
				
The function $f$ satisfies the differential equation $$f''(t)+\frac{2}{t}f'(t)+f(t)=0.$$	
		Since $\abs{f'(t)}\leq \frac{1}{2}$ and \(|f(t)|\le1/t\), 
		we have
		$$\abs{f''(t)}\leq \frac{2}{t}, \qquad \abs{\frac{f'(t)}{t}}\leq \frac{1}{2t}$$
		and hence
		$$\norm{\nabla^2K_\kappa(\bm{h})} \leq \frac{2\kappa}{\abs{\bm{h}}}.$$
		
		Fix \(\bm z\in B_1(0)\), and order the points satisfying
		\[
		|\bm z-\bm x_j|\geq \frac{\delta_X}{2}
		\]
		by increasing distance from \(\bm z\):
		\[
		r_1\leq r_2\leq \cdots \leq r_N,
		\qquad
		r_n=|\bm z-\bm x_{j_n}|.
		\]
		Since in the definition of \(E_2\) we only consider
		\[
		r_j\geq \frac{\delta_X}{2},
		\]
		and since \(\kappa\delta_X\geq 2\), we indeed have \(\kappa r_j\geq 1\).
		
		Therefore,
		\[
		\|\nabla^2K_\kappa(\bm{z}-\bm{x_{j_k}})\|_2^2
		\leq
		\frac{4\kappa^2}{r_k^2}
		\]

		By the packing estimate,
		\[
		r_n\geq \frac{1}{4}\delta_X n^{1/3}.
		\]
		Thus
		\[
		\begin{aligned}
			\sum_{n=1}^N \frac{1}{r_n^2}
			&\leq
			\frac{4^2}{\delta_X^2}
			\sum_{n=1}^s n^{-2/3}
			\leq
			3\cdot 4^2
			\frac{s^{1/3}}{\delta_X^2},
		\end{aligned}
		\]
		and so
		$$
		\sum_{k:\abs{\bm{z}-\bm{x_{j_k}}}\geq{\delta_X}/{2}}\abs{\nabla^2K_\kappa(\bm{z}-\bm{x_{j_k}})}^2\leq\frac{3\cdot 4^3\kappa^2s^{1/3}}{\delta_X^2}		\leq
		\frac{4}{c_\nu^2}\nu_X\kappa^4.
		$$
	\end{proof}
	
	\section{Proof of main  theorems}
	\label{sec:proof}
	
		\subsection{ Local Hessian estimate: proof of \cref{thm:certified-music-landscape} (i)}\label{sec:hessian}
		
		In this section, we establish the local curvature bounds showing that
		the certified neighborhoods are strongly convex MUSIC wells; these
		bounds later imply convergence of fixed-step
		gradient descent. 
		\begin{lemma}
		\label{lem:hessian} Assume \(\rho\leq\delta_X/2,\,\,\kappa\delta_X\ge 2,\,\,\mu_X<1\), \(\gamma=\kappa\rho\).
		
	Then for every $j=1,\dots,s$	and  for all $\bm{z}\in \overline{B_\rho(\bm{x}_j)}$
	 	\beq\label{eq:7.1}
			\norm{\nabla^2 q(\bm{z})-2\Psi_\kappa}&\le &C_2 \kappa^2,\\
			C_2&=& \frac{8}{15} \gamma^2 +6\nu_X+ \frac{2\mu_X}{1-\mu_X} C_1,\\
				C_1&=&
			\frac{\gamma^2}{9}
			+
			\nu_X+\sqrt{1+\nu_X}
			\sqrt{				\left(
				\frac{1}{3}+\frac{\gamma^2}{10}
				\right)^2
				+
				4\nu_X
			}
			\eeq
	and hence
		\beq
		\label{eq:7.20}
		\nabla^2 q(\bm{z})\ge  \left(\frac{2}{3}-C_2\right)\kappa^2\Id_3, 
		\qquad
		\forall\bm{z}\in B_\rho(\bm{x}_j)
		\eeq
	provided $C_2<2/3$.
		Let
		\beq
		C^-_0&:=&	\frac23
			-
			C_2
			-
			\left(
			\frac{2}{\sqrt5}+\frac23\right) 	\eps_{\rm sub}\\
				C^+_0&:=&	\frac23
			+
			C_2
			+
			\left(
			\frac{2}{\sqrt5}+\frac23\right) 	\eps_{\rm sub}.
\eeq
If additionally $C_0^->0$ 
 then					\beq
		\label{eq:7.21}
	C^-_0\kappa^2I_3\le		\nabla^2\widetilde q(\bm{z})
			\le C_0^+\kappa^2I_3.	
		\eeq

		\end{lemma}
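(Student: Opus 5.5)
Starting from \eqref{eq:music2}, $q(\bz)=1-\bm k(\bz)^*G^{-1}\bm k(\bz)$ with $\bm k$ real-valued because $K_\kappa$ is real and even. I will split $G^{-1}=I+(G^{-1}-I)$, which gives
\[
q(\bz)=1-\sum_k K_\kappa(\bz-\bx_k)^2-\bm k^T(G^{-1}-I)\bm k .
\]
Fix $\bz\in\overline{B_\rho(\bx_j)}$. Since $\rho\le\delta_X/2$, every $k\neq j$ has $|\bz-\bx_k|\ge\delta_X/2$, so those indices fall in the range of the nonlocal energies $E_0,E_1,E_2$.

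\textbf{Principal part.} Differentiating twice gives
\[
-\nabla^2 K_\kappa^2(\bh)=-2\nabla K_\kappa\nabla K_\kappa^T-2K_\kappa\nabla^2K_\kappa .
\]
For $k=j$ with $\bh=\bz-\bx_j$, $|\bh|\le\rho$, I compare with $2\Psi_\kappa$ using \cref{lem:kernel_derivatives_matched}. The term $2|\nabla K_\kappa|^2$ is at most $\tfrac29\gamma^2\kappa^2$. The term $-2K\nabla^2K-2\Psi_\kappa=-2K(\nabla^2K+\Psi_\kappa)+2(K-1)\Psi_\kappa$ is bounded using $|K|\le1$, the bound \eqref{eq:5.3}, and $|1-K|\le\kappa^2|\bh|^2/6$. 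Together these give a $\gamma^2\kappa^2$ contribution; I would tune the estimates to land on $\tfrac{8}{15}\gamma^2$, tightening with $1-K\le t^2/6$ and the exact $\tfrac{1}{10}$ if needed.

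For $k\ne j$, Cauchy--Schwarz gives $\sum_k 2|\nabla K|^2\le 2E_1$ and $\sum_k 2|K|\,\|\nabla^2K\|\le 2\sqrt{E_0E_2}$. By \cref{lem-second-derivative-nonlocal-energy-estimate} these are $\le 2\nu_X\kappa^2$ and $\le 4\nu_X\kappa^2/c_\nu^2\cdot c_\nu\cdots$. In total they contribute $\lesssim\nu_X\kappa^2$, and the bookkeeping should yield the $6\nu_X$ term.

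\textbf{Correction part} $R(\bz)=\bm k^T(G^{-1}-I)\bm k$. \cref{thm:sparse-spherical-frame} gives $\|G^{-1}-I\|\le\mu_X/(1-\mu_X)$. The Hessian of $R$ is
\[
2\,(\partial\bm k)^T(G^{-1}-I)(\partial\bm k)+2\,\bm k^T(G^{-1}-I)\,\partial^2\bm k ,
\]
so its norm is at most $\frac{2\mu_X}{1-\mu_X}\bigl(\|D\bm k\|^2+\|\bm k\|\,\|D^2\bm k\|\bigr)$, where $D\bm k$ is the $s\times3$ Jacobian and $D^2\bm k$ is viewed as a vector of Hessians contracted along unit directions. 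I must show $\|D\bm k\|^2+\|\bm k\|\|D^2\bm k\|\le C_1\kappa^2$. I split each into the local index $j$ and the nonlocal ones.
\begin{itemize}
\item $\|D\bm k\|^2\le \tfrac19\gamma^2\kappa^2+E_1\le(\tfrac{\gamma^2}{9}+\nu_X)\kappa^2$.
\item $\|\bm k\|^2\le 1+E_0$.
\item For a unit direction, $\|\partial_{vv}\bm k\|^2\le(\tfrac13+\tfrac{\gamma^2}{10})^2\kappa^4+E_2$.
\end{itemize}
The exact form of $C_1$, with $\sqrt{1+\nu_X}$ and $4\nu_X$, suggests using $\sqrt{1+E_0}\le\sqrt{1+\nu_X}$ after absorbing $c_\nu^2\ge1$ and $E_2\le4\nu_X\kappa^4$. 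This gives \eqref{eq:7.1}, and \eqref{eq:7.20} follows since $2\Psi_\kappa=\tfrac23\kappa^2 I_3$.

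\textbf{Perturbed Hessian.} I write $\widetilde q-q=\langle(P_{\cU}-P_{\widetilde\cU})\varphi_\bz,\varphi_\bz\rangle$. Differentiating twice in direction $\bv$ produces two kinds of terms:
\[
2\,\mathrm{Re}\langle E\,\partial_{vv}\varphi_\bz,\varphi_\bz\rangle \quad\text{and}\quad 2\langle E\,\partial_v\varphi_\bz,\partial_v\varphi_\bz\rangle,\qquad E:=P_\cU-P_{\widetilde\cU}.
\]
Now $\|\varphi_\bz\|=1$, $\|\partial_v\varphi_\bz\|^2=\kappa^2/3$ and $\|\partial_{vv}\varphi_\bz\|^2=\kappa^4\langle(\bom\cdot\bv)^4\rangle=\kappa^4/5$. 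This gives the bound $(2/\sqrt5+2/3)\kappa^2\epssub$, which yields \eqref{eq:7.21} by Weyl's inequality.

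The main obstacle is the constant-tracking in the correction term, that is, matching exactly the stated $C_1$ and $C_2$, particularly the mixed term $\bm k^T(G^{-1}-I)\partial^2\bm k$. If the constants do not match, I would accept slightly different cross-term groupings (Cauchy--Schwarz on the combined vectors $(\bm k,\partial\bm k)$).
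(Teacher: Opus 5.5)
Your proposal is correct and follows the paper's proof essentially step for step. The paper uses the same split into the local term $1-K_\kappa(\bz-\bx_j)^2$, the nonlocal sum $\sum_{k\ne j}K_\kappa(\bz-\bx_k)^2$, and the Gram correction $\bm k^T(G_X^{-1}-I)\bm k$, with the same Cauchy--Schwarz bookkeeping via $E_0,E_1,E_2$ to get $C_1$, and the same directional perturbation bound (\cref{lem:perturbation}) for $\widetilde q$. No tuning is needed: the bounds you already list give $(\frac19+\frac15+\frac29)\gamma^2\kappa^2=\frac8{15}\gamma^2\kappa^2$ for the local term and $(2\nu_X+4\nu_X/c_\nu^2)\kappa^2\le6\nu_X\kappa^2$ for the nonlocal one.
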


	\begin{proof}
			For each fixed $j$, we decompose $\bm{k}(\bm{z})$ into its local and nonlocal parts: 		\[
		\bm{k}(\bm{z})
		=
		K_\kappa(\bm{z}-\bm{x}_j)\bm{e}_j+\sum_{m\neq j}K_\kappa(\bm{z}-\bm{x}_m)\bm e_m
		\]
			where $\bm{e}_j$ is the $j$-th standard basis vector. 		
		Because $\rho\leq\delta_X/2$, for every $m\neq j$ we have
		$$
		|\bm{z}-\bm{x}_m|
		\geq
		|\bm{x}_j-\bm{x}_m|-|\bm{z}-\bm{x}_j|
		\geq
		\frac{\delta_X}{2}
		$$
and by  \cref{lem:nonlocal_energy_matched} that for  $		\bm{r}(\bm{z})
		=
		\big[(1-\delta_{jm})K_\kappa(\bm{z}-\bm{x}_m)\big]_{m=1}^s $ and every
		unit vector $\bm{v}\in\mathbb R^3$,
	\begin{eqnarray*}		\|\bm{r}(\bm{z})\|_2^2
		&\leq&\nu_X
		\\
			\|\partial_{\bm{v}}\bm{r}(\bm{z})\|_2^2
		&\leq&
		\nu_X\kappa^2\\
		\|\partial_{\bm{v}}^2\bm{r}(\bm{z})\|_2^2
		&\leq&
		4\nu_X\kappa^4. 
\end{eqnarray*}

		The eigenvalues of $G_X$ lie in $[1-\mu_X,1+\mu_X]$. Hence
		$$
		G_X^{-1}=I+R_X,
		$$
		where $R$ is Hermitian and
		$$
		\|R_X\|_2
		\leq
		\frac{\mu_X}{1-\mu_X}.
		$$
		With this let us decompose the MUSIC function as 
		$$
		q=f_1-f_2-f_3,
		$$
		where
	\begin{eqnarray*}
		f_1(\bm{z})
		&:=&
		1-|K_\kappa(\bm{z}-\bm{x}_j)|^2\\
		f_2(\bm{z})
		&:=&
		\|\bm{r}(\bm{z})\|_2^2\\
		f_3(\bm{z})
		&:=&
		\bm{k}(\bm{z})^\ast R_X\bm{k}(\bm{z}).
		\end{eqnarray*}
		We estimate the Hessians of these three terms separately.
		
		\medskip
		
		\noindent\textbf{Step 1: The local term.}
		
		Since $K_\kappa$ is real-valued,
		$$
		\nabla^2f_1(\bm{z})
		=
		-2K_\kappa(\bm{h})\nabla^2K_\kappa(\bm{h})
		-
		2\nabla K_\kappa(\bm{h})
		\nabla K_\kappa(\bm{h})^T,\quad \bh:=\bz-\bx_j. 
		$$
		From \eqref{eq:5.1} we have		$$
		\nabla^2K_\kappa(\bm{h})
		=
		-\Psi_\kappa+R(\bm{h}).
		$$
	By Lemma \ref{lem:kernel_derivatives_matched}	we have 
\begin{eqnarray*}
		|R(\bm{h})|
		&\leq&
		\frac{1}{10}\kappa^4|\bm{h}|^2
		\leq
		\frac{\gamma^2}{10}\kappa^2\\
				|\nabla K_\kappa(\bm{h})|
		&\leq&
		\frac{1}{3}\kappa^2|\bm{h}|
		\leq
		\frac{\gamma}{3}\kappa\\
		1-K_\kappa(\bm{h})
		&\leq&
		\frac{\kappa^2|\bm{h}|^2}{6}
		\leq
		\frac{\gamma^2}{6}
	\end{eqnarray*}
	and hence 		$$
		\begin{aligned}
			\left\|
			\nabla^2f_1(\bm{z})-2\Psi_\kappa
			\right\|_2
			\leq{}&
			2|1-K_\kappa(\bm{h})|
			\|\Psi_\kappa\|_2
			+
			2|K_\kappa(\bm{h})|
			\|R(\bm{h})\|_2+
			2|\nabla K_\kappa(\bm{h})|^2.
		\end{aligned}
		$$
		Using $|K_\kappa(\bm{h})|\leq1$, we obtain
		$$
		\begin{aligned}
			\left\|
			\nabla^2f_1(\bm{z})-2\Psi_\kappa
			\right\|_2
			&\leq
			\left(
			\frac{1}{9}
			+
			\frac{1}{5}
			+
			\frac{2}{9}
			\right)
			\gamma^2\kappa^2=
			\frac{8}{15}\gamma^2\kappa^2.
		\end{aligned}
		$$
			
		\medskip
		
		\noindent\textbf{Step 2: The nonlocal Euclidean term.}
		
		Differentiating twice gives
		$$
		\partial_{\bm{v}}^2f_2(\bm{z})
		=
		2\|\partial_{\bm{v}}\bm{r}(\bm{z})\|_2^2
		+
		2\operatorname{Re}
		\left\langle
		\partial_{\bm{v}}^2\bm{r}(\bm{z}),
		\bm{r}(\bm{z})
		\right\rangle
		$$
		for any unit vector $\bm{v}\in\mathbb R^3$. 
		Therefore
		$$
		\left|
		\partial_{\bm{v}}^2f_2(\bm{z})
		\right|
		\leq
		2\|\partial_{\bm{v}}\bm{r}(\bm{z})\|_2^2
		+
		2\|\partial_{\bm{v}}^2\bm{r}(\bm{z})\|_2
		\|\bm{r}(\bm{z})\|_2.
		$$
		\cref{lem:nonlocal_energy_matched} implies
		$$
		\|\bm{r}(\bm{z})\|_2
		\leq
		\sqrt{\nu_X},
		\qquad
		\|\partial_{\bm{v}}^2\bm{r}(\bm{z})\|_2
		\leq
		2\sqrt{\nu_X}\,\kappa^2.
		$$
		Hence
		$$
		\left|
		\partial_{\bm{v}}^2f_2(\bm{z})
		\right|
		\leq
		2\nu_X\kappa^2+4\nu_X\kappa^2
		=
		6\nu_X\kappa^2.
		$$
		Taking the supremum over all unit vectors $\bm{v}$ gives
		$$
		\|\nabla^2f_2(\bm{z})\|_2
		\leq
		6\nu_X\kappa^2.
		$$
		
		\medskip
		
		\noindent\textbf{Step 3: The Gram perturbation term.}
		
		For every unit vector $\bm{v}\in\mathbb R^3$,
		$$
		\begin{aligned}
			\partial_{\bm{v}}^2f_3(\bm{z})
			={}&
			2\operatorname{Re}
			\left\langle
			\partial_{\bm{v}}^2\bm{k}(\bm{z}),
			R_X\bm{k}(\bm{z})
			\right\rangle+
			2
			\left\langle
			\partial_{\bm{v}}\bm{k}(\bm{z}),
			R_X\partial_{\bm{v}}\bm{k}(\bm{z})
			\right\rangle
		\end{aligned}
		$$
	and hence		$$
		\left|
		\partial_{\bm{v}}^2f_3(\bm{z})
		\right|
		\leq
		2\|R_X\|_2
		\left(
		\|\partial_{\bm{v}}^2\bm{k}(\bm{z})\|_2
		\|\bm{k}(\bm{z})\|_2
		+
		\|\partial_{\bm{v}}\bm{k}(\bm{z})\|_2^2
		\right).
		$$

First, write
\[
\bm{k}(\bm{z})
=
K_\kappa(\bz-\bx_j)\bm e_j+\bm r_j(z),
\] where \(\bm r_j\) has zero \(j\)-th coordinate. Then
\[
\|\bm{k}(\bm{z})\|_2^2
=
|K_\kappa(\bz-\bx_j)|^2+\|\bm r_j(\bz)\|_2^2
\le
1+\frac{\nu_X}{c_\nu^2}
\le1+\nu_X.
\] 		
		Second, \eqref{eq:5.2} and \cref{lem:nonlocal_energy_matched} imply
		\beqn
		\|\partial_{\bm{v}}\bm{k}(\bm{z})\|_2^2
		&\leq&
		\left(
		\frac{\gamma^2}{9}+\nu_X
		\right)\kappa^2.
		\eeqn
		Third, \eqref{eq:5.3} and  \cref{lem:nonlocal_energy_matched} imply
				$$
		\|\partial_{\bm{v}}^2\bm{k}(\bm{z})\|_2^2
		\leq
		\left[
		\left(
		\frac{1}{3}+\frac{\gamma^2}{10}
		\right)^2
		+
		4\nu_X
		\right]\kappa^4.
		$$
		Combining these estimates gives
		$$
		\begin{aligned}
			\left|
			\partial_{\bm{v}}^2f_3(\bm{z})
			\right|
			&\leq&
			\frac{2\mu_X}{1-\mu_X}C_1\kappa^2,\qquad
			C_1=
			\Bigg[
			\frac{\gamma^2}{9}
			+
			\nu_X+\sqrt{1+\nu_X}
			\sqrt{				\left(
				\frac{1}{3}+\frac{\gamma^2}{10}
				\right)^2
				+
				4\nu_X
			}
			\Bigg].
		\end{aligned}
		$$
			Taking the supremum over all unit vectors $\bm{v}$ yields
		$$
		\|\nabla^2 f_3(\bm{z})\|_2
		\leq
		\frac{2\mu_X}{1-\mu_X}C_1\kappa^2. 
		$$
	Combining the three estimates, we  have \eqref{eq:7.20} and by \eqref{eq:8.1}, \eqref{eq:7.21}. 
		
	\end{proof}

		\subsection{ Upper threshold: proof of \cref{thm:certified-music-landscape} (ii).}\label{sec:threshold}
	In this section, we establish  \cref{thm:certified-music-landscape} (ii)
	using the spherical frame bounds \cref{thm:sparse-spherical-frame} and nonlocal energy estimates \cref{lem:nonlocal_energy_matched}.
	
		\begin{lemma}[Principal-angle stability]
\label{lem:principal-angle-stability}
For a unit vector \(u\in\mathcal H\), define
$$
    \alpha_{\mathcal U}(u)
    :=
     \arccos\|P_\cU u\|_2,\qquad  \alpha_{\widetilde \cU}(u)
    :=
     \arccos\|P_{\widetilde\cU} u\|_2. 
$$
 Then
\begin{equation}
\label{eq:subspace-angle-lipschitz}
    \left|
        \alpha_{\mathcal U}(u)
        -
        \alpha_{\widetilde{\mathcal U}}(u)
    \right|
    \leq
    \arcsin\epssub.
\end{equation}
Consequently,
\begin{equation}
\label{eq:principal-angle-residual}
    \|(I-P_{\widetilde \cU})u \|_2
    \geq
    \sin\left(
        \left(
            \arcsin\|(I-P_\cU)u\|_2
            -
            \arcsin\epssub
        \right)_+
    \right).
\end{equation}
\end{lemma}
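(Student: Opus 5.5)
The plan is to interpret $\alpha_{\mathcal U}(u)$ as the geodesic (Fubini--Study type) angle between the unit vector $u$ and the subspace $\mathcal U$, and then prove a triangle inequality for angles. Concretely, $\alpha_{\mathcal U}(u)=\min_{w\in\mathcal U,\|w\|_2=1}\arccos|\langle u,w\rangle|$, with the minimum attained at $w=P_{\mathcal U}u/\|P_{\mathcal U}u\|_2$ when $P_{\mathcal U}u\neq0$. The angle $\theta(u,w):=\arccos|\langle u,w\rangle|$ between unit vectors, taken modulo phase, is a metric on projective space: it is the Fubini--Study distance. In particular it satisfies the triangle inequality $\theta(u,w')\le\theta(u,w)+\theta(w,w')$. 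I will quote this fact rather than reprove it. A direct argument also works: restrict to the span of the three vectors, absorb phases, and use the spherical triangle inequality for real angles in $[0,\pi/2]$.

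Next I bound how far $\mathcal U$ is from $\widetilde{\mathcal U}$ in this angle. For a unit vector $w\in\mathcal U$,
\[
\|(I-P_{\widetilde{\mathcal U}})w\|_2=\|(P_{\mathcal U}-P_{\widetilde{\mathcal U}})w\|_2\le\epssub .
\]
Hence $\|P_{\widetilde{\mathcal U}}w\|_2\ge\sqrt{1-\epssub^2}$, which gives $\alpha_{\widetilde{\mathcal U}}(w)\le\arcsin\epssub$. Here we use $\epssub<1$. So there is a unit vector $w'\in\widetilde{\mathcal U}$, namely the normalized projection, with $\theta(w,w')\le\arcsin\epssub$.

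Now I combine the two steps. Take $w$ to be the closest unit vector of $\mathcal U$ to $u$. The case $P_{\mathcal U}u=0$ gives $\alpha=\pi/2$ and is handled by taking any unit $w\in\mathcal U$. The triangle inequality then gives
\[
\alpha_{\widetilde{\mathcal U}}(u)\le\theta(u,w')\le\alpha_{\mathcal U}(u)+\arcsin\epssub .
\]
The symmetric bound holds with the roles of $\mathcal U$ and $\widetilde{\mathcal U}$ swapped, since $\|P_{\widetilde{\mathcal U}}-P_{\mathcal U}\|_2$ is symmetric; this proves \eqref{eq:subspace-angle-lipschitz}.

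For \eqref{eq:principal-angle-residual}, note that $\|(I-P_{\mathcal V})u\|_2=\sin\alpha_{\mathcal V}(u)$ and $\alpha\in[0,\pi/2]$, so $\alpha_{\mathcal U}(u)=\arcsin\|(I-P_{\mathcal U})u\|_2$. From \eqref{eq:subspace-angle-lipschitz},
\[
\alpha_{\widetilde{\mathcal U}}(u)\ge\bigl(\alpha_{\mathcal U}(u)-\arcsin\epssub\bigr)_+ ,
\]
and this lower bound lies in $[0,\pi/2]$. Applying $\sin$, which is monotone on $[0,\pi/2]$, gives the claim.

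The only delicate point is the triangle inequality for the phase-invariant angle in a complex Hilbert space. If a self-contained proof is needed, I would first choose phases so that $\langle u,w\rangle\ge0$ and $\langle w,w'\rangle\ge0$. Then $\operatorname{Re}\langle u,w'\rangle$ is bounded below by $\cos(\theta_1+\theta_2)$ via the real-part Cauchy--Schwarz decomposition $u=\cos\theta_1\,w+\sin\theta_1\,e$. Finally, use $|\langle u,w'\rangle|\ge\operatorname{Re}\langle u,w'\rangle$.
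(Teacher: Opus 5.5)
Your proposal is correct and follows essentially the same route as the paper. Both project onto one subspace, use $\|(P_{\widetilde{\mathcal U}}-P_{\mathcal U})w\|_2\le\epssub$ to place the normalized projection within angle $\arcsin\epssub$ of the other subspace, apply the triangle inequality for phase-invariant angles, swap the roles of $\mathcal U$ and $\widetilde{\mathcal U}$, and take sines on $[0,\pi/2]$. The differences are cosmetic: the paper proves $\alpha_{\mathcal U}\le\alpha_{\widetilde{\mathcal U}}+\arcsin\epssub$ first and cites the triangle inequality for vector-to-subspace angles without proof, whereas you pass through an explicit third vector $w'$ and sketch the Fubini--Study triangle inequality.
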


\begin{proof}
Set
\[
    \theta:=\arcsin\epssub. 
\]
For equal-dimensional finite-dimensional subspaces, \(\theta\) is
their largest principal angle.
We first prove
\[
    \alpha_{\mathcal U}(u)
    \leq
    \alpha_{\widetilde{\mathcal U}}(u)+\theta.
\]
If \(P_{\widetilde U} u=0\), then
\(\alpha_{\widetilde{\mathcal U}}(u)=\pi/2\), and the claim is
immediate.  Otherwise, let
\[
    v:=\frac{P_{\widetilde U}u}{\|P_{\widetilde U}u\|_2}
    \in\widetilde{\mathcal U}.
\]
Then
\[
    \angle(u,v)
    =
    \alpha_{\widetilde{\mathcal U}}(u).
\]
Since \(v\in\widetilde{\mathcal U}\),
\[
    \|(I-P_\cU)v\|_2
    =
    \|(P_{\widetilde \cU}-P_\cU)v\|_2
    \leq\epssub
\]
and hence
\[
    \angle(v,\mathcal U)
    =
    \arcsin\|(I-P_\cU)v\|_2
    \leq\theta.
\]
The triangle inequality for acute angles gives
\[
    \alpha_{\mathcal U}(u)
    \leq
    \angle(u,v)+\angle(v,\mathcal U)
    \leq
    \alpha_{\widetilde{\mathcal U}}(u)+\theta.
\]
Interchanging \(\mathcal U\) and
\(\widetilde{\mathcal U}\) proves
\eqref{eq:subspace-angle-lipschitz}.

It follows that
\[
    \alpha_{\widetilde{\mathcal U}}(u)
    \geq
    \left(
        \alpha_{\mathcal U}(u)-\theta
    \right)_+.
\]
Taking sine, which is increasing on \([0,\pi/2]\), proves
\eqref{eq:principal-angle-residual}.
\end{proof}

		\begin{lemma}
		\label{lem:outside_lower_matched} 
		Suppose $\mu_X<1$ and $A_\gamma$ as given in \eqref{eq:A-gamma}  is positive. 
Then 	
			\beq
			\label{eq:8.11} q(\bz)\geq A_\gamma,\quad \forall \bz\in B_1(0)\setminus\bigcup_{\ell=1}^s B_\rho(\bm{x}_\ell)
			\eeq
			and 
			\begin{equation}
\label{eq:angular-outside-lower}
    \widetilde q(\bz)
    \geq
    \sin^2\left(
        \left(
            \arcsin\sqrt{A_\gamma}
            -
            \arcsin\epssub
        \right)_+
    \right), \quad \forall \bz\in B_1(0)\setminus\bigcup_{\ell=1}^s B_\rho(\bm{x}_\ell). 
\end{equation}
		\end{lemma}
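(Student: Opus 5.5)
We first prove the unperturbed bound \eqref{eq:8.11}. The angular bound \eqref{eq:angular-outside-lower} will then follow from \cref{lem:principal-angle-stability}.

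Fix $\bz\in B_1(0)\setminus\bigcup_\ell B_\rho(\bx_\ell)$. By \eqref{eq:music2},
\[
q(\bz)=1-\bm k(\bz)^*G_X^{-1}\bm k(\bz).
\]
By \cref{thm:sparse-spherical-frame}, $G_X^{-1}\preceq(1-\mu_X)^{-1}I_s$. Hence
\[
q(\bz)\ge 1-\frac{\|\bm k(\bz)\|_2^2}{1-\mu_X}.
\]
It therefore suffices to show $\|\bm k(\bz)\|_2^2\le 1-b_\gamma+\nu_X$, because then
\[
q(\bz)\ge \frac{1-\mu_X-(1-b_\gamma+\nu_X)}{1-\mu_X}=\frac{b_\gamma-\mu_X-\nu_X}{1-\mu_X}=A_\gamma .
\]

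To bound $\|\bm k(\bz)\|_2^2$, let $\bx_j$ be a point of $X$ nearest to $\bz$, and split $\bm k$ into its $j$-th coordinate and the remainder. Every other point $\bx_m$, $m\ne j$, satisfies $|\bz-\bx_m|\ge |\bx_m-\bx_j|/2\ge\delta_X/2$, since $\bx_j$ is at least as close to $\bz$ as $\bx_m$. So the remainder is controlled by the nonlocal energy: \cref{lem:nonlocal_energy_matched} gives $\sum_{m\ne j}|K_\kappa(\bz-\bx_m)|^2\le E_0\le \nu_X/c_\nu^2\le\nu_X$. For the local term, $|\bz-\bx_j|\ge\rho$ means $\kappa|\bz-\bx_j|\ge\gamma$, so
\[
|K_\kappa(\bz-\bx_j)|^2=\sinc^2(\kappa|\bz-\bx_j|)\le \sup_{t\ge\gamma}\sinc^2 t=1-b_\gamma .
\]
Adding the two pieces gives $\|\bm k(\bz)\|_2^2\le 1-b_\gamma+\nu_X$, which proves \eqref{eq:8.11}.

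The one technical point is that \cref{lem:nonlocal_energy_matched} assumes $\kappa\delta_X\ge2$. This follows from $\mu_X<1$: it gives $\kappa\delta_X>\frac92 s^{2/3}\ge\frac92\cdot 2^{2/3}>2$ for $s\ge2$. I would also check that the packing step of that lemma really only uses that the counted points lie at distance at least $\delta_X/2$ from $\bz$, which holds here. This bookkeeping is the main thing to verify; everything else is direct.

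For the perturbed bound, $\varphi_\bz$ is a unit vector in $\cH$ because $K_\kappa(0)=1$. We have $\|(I-P_\cU)\varphi_\bz\|_2=\sqrt{q(\bz)}\ge\sqrt{A_\gamma}$, and $\sqrt{A_\gamma}\le 1$ since $A_\gamma\le b_\gamma/(1-\mu_X)$ is bounded by a quantity that is at most one when $q\le1$. Applying \eqref{eq:principal-angle-residual} with $u=\varphi_\bz$ and using that $\arcsin$ is increasing gives
\[
\sqrt{\widetilde q(\bz)}\ge \sin\bigl((\arcsin\sqrt{A_\gamma}-\arcsin\epssub)_+\bigr).
\]
Squaring yields \eqref{eq:angular-outside-lower}.
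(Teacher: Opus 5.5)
Your proposal is correct and follows the paper's proof: bound $G_X^{-1}\preceq(1-\mu_X)^{-1}I_s$, split $\|\bm k(\bz)\|_2^2$ into the nearest-point term (at most $1-b_\gamma$) plus a remainder controlled by $E_0\le\nu_X$, and then apply \cref{lem:principal-angle-stability} to $u=\varphi_\bz$. Your observation $|\bz-\bx_m|\ge|\bx_m-\bx_j|/2$ simply streamlines the paper's case split on $\rho$ versus $\delta_X/2$, and the justification of $A_\gamma\le1$ is cleanest as $A_\gamma=1-(1-b_\gamma+\nu_X)/(1-\mu_X)\le1$, since $b_\gamma\le1$ and $\nu_X\ge0$.
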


	\begin{proof} By the frame bounds, we have 
\[
(1+\mu_X)^{-1}\le G_X^{-1}\le  (1-\mu_X)^{-1}
\] and hence
\[
        \bm{k}(\bm{z})^*G_X^{-1}\bm{k}(\bm{z})
        \le (1-\mu_X)^{-1}\abs{\bm{k}(\bm{z})}_2^2.
\]

From \eqref{eq:kernel-vector} we have
		\beq
		\label{eq:7.11}
		\norm{\bm{k}(\bm{z})}^2
		=
		\sum_{j}\abs{K_\kappa(\bm{z}-\bm{x}_j)}^2. 
		\eeq
	If $\rho \ge \delta_X/2$ then 
		\beq
		\label{eq:7.12}
		\norm{\bm{k}(\bm{z})}^2\le E_0\le\nu_X;
		\eeq
if, however, $\rho<\delta_X/2$, then 
\beq
\nonumber
        \norm{\bm{k}(\bm{z})}^2
       & \le&
        \sup_{\abs{\bm{h}}\ge\rho}\abs{K_\kappa(\bm{h})}^2
        +
        \sum_{j\ne j_0}\abs{K_\kappa(\bm{z}-\bm{x}_j)}^2\\
        &\le& \sup_{t\ge \gamma} \sinc^2(t)+E_0\nonumber\\
        & \le & 1-b_\gamma+\nu_X\label{eq:7.13}
\eeq
where  $j_0$ is an index of a nearest point to $\bm{z}$.  
Indeed, if $|\bz-\bx_{j_0}|\geq\delta_X/2$, every term is included
in the definition of $E_0$; otherwise, for $j\ne j_0$,
\[
    |\bz-\bx_j|
    \geq|\bx_j-\bx_{j_0}|-|\bz-\bx_{j_0}|
    >\frac{\delta_X}{2}.
\]
 In either case, 
		\beq
		\label{eq:7.14}
		\bm{k}(\bm{z})^*G_X^{-1}\bm{k}(\bm{z})
		\le (1-\mu_X)^{-1}(1-b_\gamma+\nu_X)
		\eeq
		and hence \eqref{eq:8.11}. 
Apply \cref{lem:principal-angle-stability} to the unit Fourier atom
\[
    u=\varphi_{\bz}.
\]
Then \eqref{eq:8.11} implies \eqref{eq:angular-outside-lower}. 

\end{proof}

\begin{corollary}[Sharp threshold condition]
\label{cor:sharp-angular-threshold}
Suppose
\[
    0<A_\gamma\leq1
\]
and that
\[
    q(\bz)\geq A_\gamma,\qquad \forall \bz\in B_1(0)\setminus\bigcup_{\ell=1}^s B_\rho(\bm{x}_\ell).
\]
Set
\[
    \alpha_\gamma:=\arcsin\sqrt{A_\gamma},
    \qquad
    \theta:=\arcsin\epssub.
\]
Let \[
    \tau_0:=\epssub^2
\]
and
\[
    \tau_1
    :=
    \sin^2\left(
        \left(
            \alpha_\gamma-\theta
        \right)_+
    \right).
\]
A strict threshold gap
\[
    \tau_0<\tau_1
\]
holds if and only if
\begin{equation}
\label{eq:sharp-angular-condition}
\epssub
    <
    \sin\left(
        \frac12\arcsin\sqrt{A_\gamma}
    \right).
\end{equation}
Equivalently,
\begin{equation}
\label{eq:sharp-polynomial-condition2} 
   \epssub < \frac{\sqrt{1-\sqrt{1-A_\gamma}}}{\sqrt 2} .
\end{equation}

\end{corollary}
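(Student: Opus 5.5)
The whole argument takes place on $[0,\pi/2]$, where $\sin^2$ is strictly increasing. First set the ranges. Since $0<A_\gamma\le1$, we have $\alpha_\gamma=\arcsin\sqrt{A_\gamma}\in(0,\pi/2]$. Since $\epssub<1$ by \eqref{eq:intro-oracle-bound}, we have $\theta=\arcsin\epssub\in[0,\pi/2)$. Then $\tau_0=\sin^2\theta$ and $\tau_1=\sin^2\bigl((\alpha_\gamma-\theta)_+\bigr)$, and both arguments $\theta$ and $(\alpha_\gamma-\theta)_+$ lie in $[0,\pi/2]$. Strict monotonicity of $\sin^2$ there gives
\[
\tau_0<\tau_1 \iff \theta<(\alpha_\gamma-\theta)_+ .
\]

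Next I remove the positive part. If $\alpha_\gamma\le\theta$, the right side is $0$, and $\theta<0$ is impossible, so the inequality fails. In that case also $2\theta\ge\alpha_\gamma$, so $\theta<\alpha_\gamma/2$ fails too. If $\alpha_\gamma>\theta$, the condition reads $\theta<\alpha_\gamma-\theta$, i.e.\ $\theta<\alpha_\gamma/2$. Conversely, $\theta<\alpha_\gamma/2$ forces $\theta<\alpha_\gamma$. So in both cases $\tau_0<\tau_1\iff\theta<\tfrac12\alpha_\gamma$.

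Since $\tfrac12\alpha_\gamma\in(0,\pi/4]$ and $\sin$ is strictly increasing on $[0,\pi/2]$, this is equivalent to $\epssub=\sin\theta<\sin(\tfrac12\alpha_\gamma)$, which is \eqref{eq:sharp-angular-condition}.

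For \eqref{eq:sharp-polynomial-condition2}, I use the half-angle formula $\sin^2(\alpha/2)=(1-\cos\alpha)/2$ with $\cos\alpha_\gamma=\sqrt{1-A_\gamma}$; this is valid because $\alpha_\gamma\in[0,\pi/2]$, so the cosine is nonnegative. This gives $\sin(\tfrac12\alpha_\gamma)=\sqrt{1-\sqrt{1-A_\gamma}}/\sqrt2$. Both sides of \eqref{eq:sharp-angular-condition} are nonnegative, so the two conditions are equivalent.

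The only delicate point is the case analysis around the positive part $(\cdot)_+$ in the "only if" direction. The rest is monotonicity on the correct interval and the half-angle identity. The hypothesis $q\ge A_\gamma$ outside the wells is not needed for the equivalence; it only explains why $\tau_1$ is the relevant threshold, via \cref{lem:outside_lower_matched}.
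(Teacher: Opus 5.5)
Your proof is correct. The first equivalence is the paper's argument: strict monotonicity of $\sin^2$ on $[0,\pi/2]$ together with the case split on the positive part, $\alpha_\gamma\le\theta$ versus $\alpha_\gamma>\theta$, reduces $\tau_0<\tau_1$ to $2\theta<\alpha_\gamma$.

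The derivation of the polynomial form differs. The paper first notes that the angular condition forces $\theta<\pi/4$, so that $\sin$ is monotone at $2\theta$. It then applies the double-angle identity to get $2\theta<\alpha_\gamma\iff A_\gamma>4\epssub^2(1-\epssub^2)$. Reaching the stated bound from there implicitly means solving a quadratic in $\epssub^2$ and discarding the larger root $(1+\sqrt{1-A_\gamma})/2$ via $\epssub^2<1/2$.

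Your half-angle identity $\sin(\tfrac12\alpha_\gamma)=\sqrt{(1-\sqrt{1-A_\gamma})/2}$ shows the two right-hand sides are literally the same number. You therefore need neither the $\theta<\pi/4$ side restriction nor the root selection, so your route is shorter and more transparent. What the paper's route gives in return is the intermediate criterion $A_\gamma>4\epssub^2(1-\epssub^2)$, which is convenient when checking concrete parameter regimes.

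You are also right that the hypothesis $q\ge A_\gamma$ outside the wells plays no role in the equivalence. It only identifies $\tau_1$ as the exterior lower bound.
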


\begin{proof}
Note that  \(\tau_0=\sin^2\theta\) and the right hand side of 
\eqref{eq:angular-outside-lower} equals 
\(\tau_1\).

If \(\alpha_\gamma\leq\theta\), then \(\tau_1=0\), so a strict gap is
impossible.  If \(\alpha_\gamma>\theta\), then all relevant angles
belong to \([0,\pi/2]\), and therefore
\begin{align*}
    \tau_0<\tau_1
    &\iff
    \sin^2\theta
    <
    \sin^2(\alpha_\gamma-\theta)\\
    &\iff
    \theta<\alpha_\gamma-\theta\\
    &\iff
    2\theta<\alpha_\gamma.
\end{align*}
This is equivalent to \eqref{eq:sharp-angular-condition}.

Condition \eqref{eq:sharp-angular-condition} necessarily implies
\(\theta<\pi/4\), or \(\epssub<1/\sqrt2\) and hence the
double-angle identity gives
\begin{align*}
    2\theta<\alpha_\gamma
    &\iff
    \sin(2\theta)<\sin\alpha_\gamma\\
    &\iff
    2\epssub\sqrt{1-\epssub^2}<\sqrt{A_\gamma}\\
    &\iff
    A_\gamma>4\epssub^2(1-\epssub^2),
\end{align*}
which proves \eqref{eq:sharp-polynomial-condition2}.
\end{proof}

\begin{remark}
\label{rem:angular-low-noise-branch}
In the present MUSIC theorem the low-noise condition is automatic.  Once
\(\mu_X<1\), one has \(C_2\geq0\), and hence
\[
    C_0^-
    =
    \frac23-C_2
    -
    \left(
        \frac2{\sqrt5}+\frac23
    \right)\epssub
    >0
\]
implies
\[
 \epssub
    <
    \frac{2}
         {2+6/\sqrt5}
    <\frac1{\sqrt2}.
\]
\end{remark}

	\subsection{Localization error bound: proof of \cref{thm:certified-music-landscape} (iii).}\label{sec:perturbed-minima}
	
We state some basic perturbation estimates about derivatives. 	
	\begin{lemma}
		\label{lem:perturbation}
		For all $\bm{z}\in B_1(0)$, we have
		\beq
				\label{eq:8.2}
			|\nabla \widetilde q(\bm z)-\nabla q(\bm z)|
			&\leq&\frac{2}{\sqrt{3}}\kappa\epssub\\
		\label{eq:8.1}		|\nabla^2 \widetilde q(\bm z)-\nabla^2 q(\bm z)|
			&\leq&
			\left(
			\frac{2}{\sqrt 5}
			+
			\frac23
			\right)
			\kappa^2\eps_{\rm sub}.
		\eeq
	\end{lemma}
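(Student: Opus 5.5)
The plan is to write both objectives through a single Hermitian perturbation operator and differentiate the Fourier atom explicitly. Since $\norm{\varphi_{\bz}}=1$ for every $\bz$, we have $q(\bz)=1-\inner{P_{\cU}\varphi_{\bz}}{\varphi_{\bz}}$ and $\widetilde q(\bz)=1-\inner{P_{\widetilde\cU}\varphi_{\bz}}{\varphi_{\bz}}$. Setting $D:=P_{\cU}-P_{\widetilde\cU}$, which is Hermitian with $\norm{D}=\epssub$, this gives
\[
\widetilde q(\bz)-q(\bz)=\inner{D\varphi_{\bz}}{\varphi_{\bz}} .
\]
This difference is smooth in $\bz$, because $\bz\mapsto\varphi_{\bz}$ is a smooth $\cH$-valued map. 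Its derivatives can be taken inside the inner product.

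Fix a unit vector $\bv\in\R^3$. Pointwise in $\bom$ we have $\partial_{\bv}\varphi_{\bz}=\ii\kappa(\bom\cdot\bv)\varphi_{\bz}$ and $\partial_{\bv}^2\varphi_{\bz}=-\kappa^2(\bom\cdot\bv)^2\varphi_{\bz}$. Since $|\varphi_{\bz}|=1$, the normalized spherical moments $\frac1{4\pi}\int(\bom\cdot\bv)^2=\frac13$ and $\frac1{4\pi}\int(\bom\cdot\bv)^4=\frac15$ give
\[
\norm{\partial_{\bv}\varphi_{\bz}}=\frac{\kappa}{\sqrt3},\qquad \norm{\partial_{\bv}^2\varphi_{\bz}}=\frac{\kappa^2}{\sqrt5}.
\]
These are the same moments already used in the proof of \cref{lem:kernel_derivatives_matched}.

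For the gradient, the Hermitian symmetry of $D$ gives $\partial_{\bv}(\widetilde q-q)=2\operatorname{Re}\inner{D\,\partial_{\bv}\varphi_{\bz}}{\varphi_{\bz}}$. Cauchy--Schwarz then yields $|\partial_{\bv}(\widetilde q-q)|\le 2\epssub\cdot\frac{\kappa}{\sqrt3}\cdot 1$. Taking the supremum over unit $\bv$ turns this into the gradient-norm bound \eqref{eq:8.2}.

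For the Hessian, differentiating once more gives
\[
\partial_{\bv}^2(\widetilde q-q)=2\operatorname{Re}\inner{D\,\partial_{\bv}^2\varphi_{\bz}}{\varphi_{\bz}}+2\inner{D\,\partial_{\bv}\varphi_{\bz}}{\partial_{\bv}\varphi_{\bz}} .
\]
The second term is real because $D$ is Hermitian. Hence
\[
|\partial_{\bv}^2(\widetilde q-q)|\le 2\epssub\frac{\kappa^2}{\sqrt5}+2\epssub\frac{\kappa^2}{3}.
\]
The Hessian difference is a real symmetric matrix, so its spectral norm equals $\sup_{|\bv|=1}|\bv^T(\cdot)\bv|$, and this supremum is exactly the directional second derivative bounded above. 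That gives \eqref{eq:8.1}.

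No step is a serious obstacle. The only points needing care are (a) justifying differentiation under the integral and inner product, which is immediate since everything is smooth on the compact sphere, and (b) using Hermitian symmetry to write the first derivative as $2\operatorname{Re}$ of a single term. The bound holds for all $\bz\in\R^3$, not only $B_1(0)$.
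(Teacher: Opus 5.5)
Your proposal is correct and follows the paper's proof essentially line by line: it uses the same directional derivatives of $\varphi_{\bz}$, the same spherical moments $\frac13$ and $\frac15$, and the same Cauchy--Schwarz bounds on the two terms of the second directional derivative, followed by a supremum over unit $\bv$. Writing $\widetilde q-q=\langle D\varphi_{\bz},\varphi_{\bz}\rangle$ with $D=P_{\cU}-P_{\widetilde\cU}$ keeps your sign consistent, whereas the paper's Hessian display harmlessly flips it. You also make explicit why the supremum of the directional second derivatives equals the spectral norm of the symmetric Hessian difference, a step the paper leaves implicit.
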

	\begin{proof}		
	Let $\bm v\in \mathbb R^3$ be a unit vector. We  have
		\[
		\partial_{\bm v}\varphi_{\bm z}(\bom)
		=
		i\kappa(\bom\cdot \bm v)\varphi_{\bm z}(\bom).
		\]
		Therefore
		\[
		\begin{aligned}
			\|\partial_{\bm v}\varphi_{\bm z}\|_2^2
			&=
			\kappa^2
			\frac{1}{4\pi}
			\int_{S^2}
			(\bom\cdot \bm v)^2\,d\bom.
		\end{aligned}
		\]
		By rotational symmetry,
		\[
		\frac{1}{4\pi}
		\int_{S^2}
		(\bom\cdot \bm v)^2\,d\bom
		=
		\frac13.
		\]
		Hence
		\[
		\|\partial_{\bm v}\varphi_{\bm z}\|_2
		=
		\frac{\kappa}{\sqrt 3}.
		\]
		
		Differentiating
		in the direction $\bm v$, we obtain
		\[
		\partial_{\bm v}(\widetilde q-q)(\bm z)
		=
		2\operatorname{Re}
		\langle
		\partial_{\bm v}\varphi_{\bm z},
		(P_\cU-P_{\widetilde \cU})\varphi_{\bm z}
		\rangle
		\]
and hence
		\[
		\begin{aligned}
			|\partial_{\bm v}(\widetilde q-q)(\bm z)|
			&\leq
			2
			\|\partial_{\bm v}\varphi_{\bm z}\|_2
			\|P_{\widetilde \cU}-P_\cU\|_2
			\|\varphi_{\bm z}\|_2 =
			\frac{2}{\sqrt 3}
			\kappa
			\eps_{\rm sub}.
		\end{aligned}
		\]
		Taking the supremum over all unit vectors \(\bm v\) gives \eqref{eq:8.2}. 
		
		In a similar way we estimate the Hessian. We have
		\[
		\partial_{\bm v}^2\varphi_{\bm z}(\bom)
		=
		-\kappa^2(\bom\cdot \bm v)^2\varphi_{\bm z}(\bom).
		\]
		Therefore
		\[
		\begin{aligned}
			\|\partial_{\bm v}^2\varphi_{\bm z}\|_2^2
			&=
			\frac{\kappa^4}{4\pi}
			\int_{S^2}
			(\bom\cdot \bm v)^4\,d\bom.
		\end{aligned}
		\]
		By rotational symmetry,
		\[
		\frac{1}{4\pi}
		\int_{S^2}
		(\bom\cdot \bm v)^4\,d\bom
		=
		\frac15.
		\]
		Thus
		\[
		\|\partial_{\bm v}^2\varphi_{\bm z}\|_2
		=
		\frac{\kappa^2}{\sqrt 5}.
		\]
		
		Differentiating twice in the direction $\bm v$, we get
		\[
		\begin{aligned}
			\partial_{\bm v}^2(\widetilde q-q)(\bm z)
			&=
			2\operatorname{Re}
			\langle
			\partial_{\bm v}^2\varphi_{\bm z},
			(P_{\widetilde \cU}-P_\cU)\varphi_{\bm z}
			\rangle
			+
			2
			\langle
			\partial_{\bm v}\varphi_{\bm z},
			(P_{\widetilde \cU}-P_\cU)\partial_{\bm v}\varphi_{\bm z}
			\rangle.
		\end{aligned}
		\]
		Hence
		\[
		\begin{aligned}
			|\partial_{\bm v}^2(\widetilde q-q)(\bm z)|
			&\leq
			2
			\|\partial_{\bm v}^2\varphi_{\bm z}\|_2
			\|P_{\widetilde \cU}-P_\cU\|_2
			\|\varphi_{\bm z}\|_2
			+
			2
			\|\partial_{\bm v}\varphi_{\bm z}\|_2^2
			\|P_{\widetilde \cU}-P_\cU\|_2 \\
			&=
			2
			\frac{\kappa^2}{\sqrt 5}
			\eps_{\rm sub}
			+
			2
			\frac{\kappa^2}{3}
			\eps_{\rm sub}.
		\end{aligned}
		\]
		Thus
		\[
		|\partial_{\bm v}^2(\widetilde q-q)(\bm z)|
		\leq
		\left(
		\frac{2}{\sqrt 5}
		+
		\frac23
		\right)
		\kappa^2\eps_{\rm sub}.
		\]
		Taking the supremum over all unit vectors \(\bm v\) gives
\eqref{eq:8.1}. 			\end{proof}
	\color{black}

	Next  we establish the localization error bound \eqref{eq:localization-error} from the local Hessian estimate \cref{lem:hessian} and the perturbation bounds \cref{lem:perturbation}. 
			\begin{lemma}
		\label{lem:local_minima_matched}
Let $C^-_0>0$ be the convexity lower bound in \eqref{eq:7.21}. If 		\beq \label{eq:9.1}
		\epssub
		<\frac{\sqrt{3}}{2} C^-_0\gamma
		\eeq
then $\widetilde q$ has a unique critical point, equivalently the unique minimizer,   $\widetilde\bx_j$ in \(\overline{B_\rho(\bx_j)}\) 		 and
\[
    |\widetilde\bx_j-\bx_j|
    \leq
    \frac{2}{\sqrt3C_0^-}
    \frac{\eps_{\rm sub}}{\kappa}
    <\rho,\quad j=1,\dots,s.
\]
Moreover,
		\[
		\widetilde q(\widetilde{\bm{x}}_j)
		\le \widetilde q(\bm{x}_j)
		\le \epssub^2,\quad j=1,\dots, s.
		\]
	\end{lemma}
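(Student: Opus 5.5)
The plan is a standard strong-convexity argument on the closed ball $\overline{B_\rho(\bx_j)}$, combining the Hessian bound \eqref{eq:7.21} of \cref{lem:hessian} with the gradient perturbation bound \eqref{eq:8.2} of \cref{lem:perturbation}.

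\textbf{Step 1: gradient of $\widetilde q$ at the true point.} Since $q\ge0$ and $q(\bx_j)=0$, the point $\bx_j$ is a minimizer of $q$, so $\nabla q(\bx_j)=0$. By \eqref{eq:8.2},
\[
|\nabla\widetilde q(\bx_j)|\le \frac{2}{\sqrt3}\kappa\epssub .
\]

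\textbf{Step 2: existence and location of the minimizer.} By \eqref{eq:7.21}, $\widetilde q$ is $C_0^-\kappa^2$-strongly convex on the convex set $\overline{B_\rho(\bx_j)}$. For any $\bz$ on the sphere $|\bz-\bx_j|=r$, the monotonicity inequality gives
\[
\langle\nabla\widetilde q(\bz),\bz-\bx_j\rangle\ge C_0^-\kappa^2r^2-\frac{2}{\sqrt3}\kappa\epssub\, r .
\]
This is positive once $r>r_*:=\frac{2\epssub}{\sqrt3C_0^-\kappa}$. Hypothesis \eqref{eq:9.1} is exactly $r_*<\rho$. Hence on the boundary sphere $|\bz-\bx_j|=\rho$ the gradient points outward. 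The minimizer of the continuous function $\widetilde q$ over the compact ball therefore cannot lie on the boundary: the directional derivative toward the center is negative, so moving inward decreases $\widetilde q$. The minimizer is therefore an interior critical point.

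Next, let $\widetilde\bx_j$ be any critical point in the closed ball and take $r=|\widetilde\bx_j-\bx_j|$. The left-hand side of the displayed inequality is then $0$, which forces $r\le r_*$. This gives the claimed localization bound $|\widetilde\bx_j-\bx_j|\le r_*<\rho$.

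\textbf{Step 3: uniqueness.} Strong convexity on a convex set allows at most one critical point. If $\bm a,\bm b$ were two critical points, then
\[
0=\langle\nabla\widetilde q(\bm a)-\nabla\widetilde q(\bm b),\bm a-\bm b\rangle\ge C_0^-\kappa^2|\bm a-\bm b|^2,
\]
so $\bm a=\bm b$. Thus the interior minimizer from Step 2 is the unique critical point, equivalently the unique minimizer in $\overline{B_\rho(\bx_j)}$.

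\textbf{Step 4: value bound.} Since $\widetilde\bx_j$ minimizes over the ball and $\bx_j$ lies in it, $\widetilde q(\widetilde\bx_j)\le\widetilde q(\bx_j)$. Because $\varphi_{\bx_j}\in\mathcal U$ is a unit vector,
\[
\widetilde q(\bx_j)=\|(I-P_{\widetilde{\mathcal U}})\varphi_{\bx_j}\|_2^2=\|(P_{\mathcal U}-P_{\widetilde{\mathcal U}})\varphi_{\bx_j}\|_2^2\le\epssub^2 .
\]

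\textbf{Main obstacle.} The only delicate point is Step 2: one must show the minimizer over the closed ball is not on the boundary, without assuming a priori that a critical point exists. The outward-pointing gradient argument handles this directly.
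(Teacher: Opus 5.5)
Your proposal is correct and follows essentially the same route as the paper's proof. Both use the gradient bound at $\bx_j$ from \eqref{eq:8.2}, then the Hessian lower bound to show the gradient points outward on $\partial B_\rho(\bx_j)$, which forces the minimizer into the interior. Strong monotonicity then gives uniqueness and the bound $|\widetilde\bx_j-\bx_j|\le \frac{2\epssub}{\sqrt3 C_0^-\kappa}$, and the projector estimate gives $\widetilde q(\bx_j)\le\epssub^2$.
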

	
	\begin{proof}
		Fix $j\in\{1,\ldots,s\}$. Since  $\nabla q(\bm{x}_j)=0$,  Lemma~\ref{lem:perturbation} implies
		\beq
		|{\nabla\widetilde q(\bm{x}_j)}|
		\le \frac{2}{\sqrt{3}}\kappa\epssub.\label{eq:9.10}
		\eeq

		By Lemma~\ref{lem:hessian},
	$$			\nabla^2\widetilde q(\bm{z})
			\ge
		C^-_0\kappa^2I_3,		\qquad
	\forall 
		\bm{z}\in B_\rho(\bm{x}_j).
		$$

		We first prove the existence of a critical point in $B_\rho(\bm{x}_j)$. 
		Since $\widetilde q$ is continuous, it attains a minimum on the
		compact set
		$
		\overline{B_\rho(\bm{x}_j)}.
		$
		We claim that this minimum cannot occur on the boundary.
		
		Let
		$
		\bm{z}\in\partial B_\rho(\bm{x}_j).
		$
		By the fundamental theorem of calculus,
		$$
		\begin{aligned}
			\nabla\widetilde q(\bm{z})
			-
			\nabla\widetilde q(\bm{x}_j)
			=
			\int_0^1
			\nabla^2\widetilde q
			\bigl(
			\bm{x}_j+t(\bm{z}-\bm{x}_j)
			\bigr)
			(\bm{z}-\bm{x}_j)
			\,dt.
		\end{aligned}
		$$
		Taking the inner product with $\bm{z}-\bm{x}_j$ and using the
		Hessian lower bound gives
		$$
		\begin{aligned}
			\nabla\widetilde q(\bm{z})
			\cdot
			(\bm{z}-\bm{x}_j)
			\geq&
			-|\nabla\widetilde q(\bm{x}_j)|
			|\bm{z}-\bm{x}_j|+C^-_0\kappa^2
			|\bm{z}-\bm{x}_j|^2.
		\end{aligned}
		$$
		Since
		$$
		|\bm{z}-\bm{x}_j|
		\leq
		\rho
		=
		\frac{\gamma}{\kappa},
		$$
		we obtain
		$$
		\begin{aligned}
			\nabla\widetilde q(\bm{z})
			\cdot
			(\bm{z}-\bm{x}_j)
			&\geq
					-\frac{2\gamma}{\sqrt3}\epssub
			+
			C^-_0{\gamma^2}>0,\quad \forall
		\bm{z}\in\partial B_\rho(\bm{x}_j)
		\end{aligned}
$$		under  the assumption
				\eqref{eq:9.1}. 
		Thus the gradient points strictly outward on the boundary and  a
		minimizer of $\widetilde q$ over
		$\overline{B_\rho(\bm{x}_j)}$ cannot lie on the boundary.
		It follows that the minimizer lies in the interior
		$B_\rho(\bm{x}_j)$ and is therefore a critical point. Strong
		convexity implies that this critical point $
		\widetilde{\bm{x}}_j
		$ is unique. 
		
	Set
		$
		\bm{h}_j
		:=
		\widetilde{\bm{x}}_j-\bm{x}_j.
		$
		We have 
		$$
		\nabla\widetilde q(\widetilde{\bm{x}}_j)
		-
		\nabla\widetilde q(\bm{x}_j)
		=
		\int_0^1
		\nabla^2\widetilde q
		\bigl(
		\bm{x}_j+t\bm{h}_j
		\bigr)
		\bm{h}_j
		\,dt
		$$
	and hence 
		$$
		\begin{aligned}
			\left(
			\nabla\widetilde q(\widetilde{\bm{x}}_j)
			-
			\nabla\widetilde q(\bm{x}_j)
			\right)
			\cdot\bm{h}_j
			&=
			\int_0^1
			\bm{h}_j^T
			\nabla^2\widetilde q
			\bigl(
			\bm{x}_j+t\bm{h}_j
			\bigr)
			\bm{h}_j
			\,dt\geq
			C^-_0{\kappa^2}
			|\bm{h}_j|^2.
		\end{aligned}
		$$
		Since
		$
		\nabla\widetilde q(\widetilde{\bm{x}}_j)
		=
		\bm{0},
		$
		we conclude that
		$$
		|\bm{h}_j|
		\leq
		\frac{1}{C^-_0\kappa^2}
		|\nabla\widetilde q(\bm{x}_j)| \le 	\frac{2}{C^-_0\sqrt3}
			\frac{\epssub}{\kappa}
		$$
	by 	\eqref{eq:9.10}. 	Furthermore, the assumed bound on $\epssub$ gives
		$$
		\begin{aligned}
			|\widetilde{\bm{x}}_j-\bm{x}_j|\leq	\frac{2}{C^-_0\sqrt3}
			\frac{\epssub}{\kappa}
			&<\rho
		\end{aligned}
		$$
i.e. 		$
		\widetilde{\bm{x}}_j
		\in
		B_{\rho}(\bm{x}_j).
		$
		
		It remains to estimate the value of $\widetilde q$ at this minimum.
	To this end, we have the calculation 
			$$
		\begin{aligned}
			\widetilde q(\bm{x}_j)
			=
			\|
			(\Id-\Proj_{\widetilde{\cU}})
			\varphi_{\bm{x}_j}
			\|_2^2=
			\|
			(\Proj_{\cU}-\Proj_{\widetilde{\cU}})
			\varphi_{\bm{x}_j}
			\|_2^2\leq
			\|
			\Proj_{\cU}-\Proj_{\widetilde{\cU}}
			\|_2^2
			\|\varphi_{\bm{x}_j}\|_2^2=
			\epssub^2
		\end{aligned}
		$$
	and hence	$$
			\widetilde q(\widetilde{\bm{x}}_j)
			\leq
			\widetilde q(\bm{x}_j)
			\leq
			\epssub^2
		$$
		since $\widetilde{\bm{x}}_j$ minimizes $\widetilde q$ over
		$\overline{B_\rho(\bm{x}_j)}$. 
	\end{proof}

\subsection{Threshold accessibility: proof of  \cref{thm:certified-music-landscape} (iv).}\label{sec:initialization}

	Modifying the definition in  \cite{gradient-music} we adopt the following
	objective-only version.
	
	\begin{definition}[Threshold-admissible landscape]
		\label{def:admissible}
		Let $f\colon B_1(0)\to [0,\infty)$, let
		\[
		X=\{\bm{x}_1,\dots,\bm{x}_s\}\subset B_1(0),
		\]
		and let $\eps,\rho,\tau_0,\tau_1$ satisfy
		\[
			0\leq\eps<\rho,
			\qquad
			0\le \tau_0<\tau_1.
		\]
		We say that $f$ is a threshold-admissible optimization landscape for $X$,
		with parameters
		$(\eps,\rho,\tau_0,\tau_1)$, if:
		\begin{enumerate}[label=\textup{(\roman*)}]
			\item for every $j=1,\dots,s$, the function $f$ has a 
			local minimum
			$\widetilde{\bm{x}}_j\in
			\overline{B_\eps(\bm{x}_j)}$;
			\item the relevant minima are uniformly shallow and all points
			outside the certified neighborhoods are uniformly higher:
			\beq\label{eq:2.20}
			\max_{1\le j\le s} f(\widetilde{\bm{x}}_j)&\le& \tau_0,\\
			f(\bm{z})&\ge &\tau_1, 
			\quad
			\forall
			\bm{z}\in B_1(0)\setminus
			\bigcup_{j=1}^s B_\rho(\bm{x}_j).\label{eq:2.2}
			\eeq
		\end{enumerate}
	\end{definition}
		A threshold-admissible landscape has threshold-accessible relevant neighborhoods.	
	The next lemma formalizes this observation.	
	\begin{lemma}[Threshold-accessibility]
		\label{lem:threshold}\label{lem:threshold-coverage}
		Suppose that $f\colon B_1(0)\to [0,\infty)$ is an $L$-Lipschitz threshold-admissible  landscape for $\{\bm{x}_\ell\}_{\ell=1}^s$ with parameters $(\eps,\rho,\tau_0,\tau_1)$. Let $G\subset B_1(\bm{0})$ be a finite set satisfying
		\[
		\mesh_{B_1}(G)
	<
		\min\left\{
		\rho-\eps,\,
		\frac{\tau_1-\tau_0}{L}
		\right\}.
		\]
		Define the thresholded set
		\begin{eqnarray}
		G_{\tau_1}
		:=
		\{\bom\in G:f(\bom)<\tau_1\}.\label{eq1}
		\end{eqnarray}
		By \eqref{eq:2.2}, 	
		$$	
		G_{\tau_1}
		\subseteq
		\bigcup_{\ell=1}^s B_\rho(\bm{x}_\ell).
		$$
In addition,
		\[
		B_\rho(\bm{x}_\ell)\cap G_{\tau_1} \neq \emptyset,\quad  \forall \ell\in\{1,\dots,s\}. 
		\]
	\end{lemma}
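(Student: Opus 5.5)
The proof has two parts, matching the two conclusions of \cref{lem:threshold}. The first is immediate from the definition. If $\bm g\in G_{\tau_1}$, then $f(\bm g)<\tau_1$. The exterior bound \eqref{eq:2.2} says that $f\geq\tau_1$ on $B_1(0)\setminus\bigcup_\ell B_\rho(\bm x_\ell)$, so $\bm g$ must lie in $\bigcup_{\ell}B_\rho(\bm x_\ell)$. This is the stated inclusion $G_{\tau_1}\subseteq\bigcup_\ell B_\rho(\bm x_\ell)$.

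For the second conclusion, fix $\ell$. We use the relevant local minimum $\widetilde{\bm x}_\ell\in\overline{B_\eps(\bm x_\ell)}\subset B_1(0)$ from item (i) of \cref{def:admissible}. By the definition \eqref{eq:grid-fill-distance} of the fill distance, with the supremum taken over $B_1(0)$, there is a grid point $\bm g\in G$ with
\[
|\bm g-\widetilde{\bm x}_\ell|\leq \mesh_{B_1}(G).
\]
Since $G$ is finite, the minimum over $G$ is attained, so this choice is legitimate. We then need to show that $\bm g\in B_\rho(\bm x_\ell)$ and that $f(\bm g)<\tau_1$.

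\emph{Location.} By the triangle inequality,
\[
|\bm g-\bm x_\ell|\leq |\bm g-\widetilde{\bm x}_\ell|+|\widetilde{\bm x}_\ell-\bm x_\ell|\leq \mesh_{B_1}(G)+\eps<(\rho-\eps)+\eps=\rho,
\]
so $\bm g\in B_\rho(\bm x_\ell)$.

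\emph{Value.} Since $f$ is $L$-Lipschitz on $B_1(0)$ and $\bm g,\widetilde{\bm x}_\ell\in B_1(0)$, the shallow-minimum bound \eqref{eq:2.20} gives
\[
f(\bm g)\leq f(\widetilde{\bm x}_\ell)+L|\bm g-\widetilde{\bm x}_\ell|\leq \tau_0+L\,\mesh_{B_1}(G)<\tau_0+(\tau_1-\tau_0)=\tau_1.
\]
Hence $\bm g\in G_{\tau_1}\cap B_\rho(\bm x_\ell)$, which proves the second claim.

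No step here is a real obstacle. The only points needing care are bookkeeping:
\begin{itemize}
\item the fill-distance supremum is taken over $B_1(0)$, which requires $\widetilde{\bm x}_\ell\in B_1(0)$; this holds because the well lies inside $B_1(0)$;
\item the mesh inequality is strict, which is what delivers the strict inequalities needed in both estimates;
\item the case $L=0$ is harmless, since the second term in the minimum is then read as $+\infty$.
\end{itemize}

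For the later application to \cref{thm:certified-music-landscape}~(iv), two inputs are needed. The Lipschitz constant comes from $|\nabla\widetilde q|\leq 2\|\partial_{\bm v}\varphi_{\bm z}\|_2=2\kappa/\sqrt3$, which yields the factor $\frac{\sqrt3}{2\kappa}$ in the mesh condition. The parameters $\eps,\tau_0,\tau_1$ are supplied by \cref{lem:local_minima_matched} and \cref{lem:outside_lower_matched}.
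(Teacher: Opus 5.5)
Your proposal is correct and follows the paper's proof essentially step for step. You take the shallow minimum $\widetilde{\bm x}_\ell$, choose a grid point within the fill distance of it, and then use the triangle inequality and the Lipschitz bound, with the two strict mesh inequalities giving $|\bm g-\bm x_\ell|<\rho$ and $f(\bm g)<\tau_1$; the inclusion $G_{\tau_1}\subseteq\bigcup_\ell B_\rho(\bm x_\ell)$ is read off directly from \eqref{eq:2.2}, as in the paper.
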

	\begin{proof}
		Fix $\ell\in\{1,\ldots,s\}$. By definition of threshold-admissible landscape, there exists a relevant
		local minimum $
		\widetilde{\bm{x}}_\ell
		\in \overline{B_\eps(\bm{x}_\ell)} $
		such that $
		f(\widetilde{\bm{x}}_\ell)\leq\tau_0
		$.

		By the definition of mesh norm, there exists $\bm{z}_\ell\in G$ such
		that
		\[
		|\bm{z}_\ell-\widetilde{\bm{x}}_\ell|
		\leq \mesh_{B_1}(G).
		\]
		Hence, by the triangle inequality,
		\[
		\begin{aligned}
			|\bm{z}_\ell-\bm{x}_\ell|
			&\leq
			|\bm{z}_\ell-\widetilde{\bm{x}}_\ell|
			+
			|\widetilde{\bm{x}}_\ell-\bm{x}_\ell|
			\leq\mesh_{B_1}(G)+\eps
			< \rho,
		\end{aligned}
		\]
i.e.  $
		\bm{z}_\ell\in B_\rho(\bm{x}_\ell).$
		
		On the other hand, since $f$ is $L$-Lipschitz,
		\[
		\begin{aligned}
			f(\bm{z}_\ell)
			&\leq
			f(\widetilde{\bm{x}}_\ell)
			+
			L|\bm{z}_\ell-\widetilde{\bm{x}}_\ell|\leq
			\tau_0
			+
			L\mesh_{B_1}(G) < \tau_1.
		\end{aligned}
		\]
		Thus $\bm{z}_\ell\in G_{\tau_1}$ implying  $\bz_\ell\in B_\rho(\bm{x}_\ell)\cap G_{\tau_1} \neq \emptyset$.
		\end{proof}
	
	The mesh condition can be made fully explicit because the MUSIC
objective is globally Lipschitz.

\begin{lemma}[Global Lipschitz bound]
\label{lem:music-global-lipschitz}
For every orthogonal projector \(P\) on \(\mathcal H\), the function
\[
    q_P(\bz):=1-\|P\varphi_\bz\|_2^2
\]
satisfies
\begin{equation}
\label{eq:music-global-lipschitz}
    |q_P(\bz)-q_P(\bw)|
    \leq\frac{2\kappa}{\sqrt3}|\bz-\bw|,
    \qquad \bz,\bw\in B_1(0).
\end{equation}
\end{lemma}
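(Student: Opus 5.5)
The plan is to bound the directional derivative of \(q_P\) uniformly and then integrate along the segment joining \(\bw\) to \(\bz\). Since \(B_1(0)\) is convex, the segment \(\bm\zeta(t)=\bw+t(\bz-\bw)\), \(t\in[0,1]\), stays in the ball, although the bound will in fact hold on all of \(\R^3\). The map \(\bz\mapsto\varphi_\bz\in\cH\) is smooth, because \(\kappa\bom\cdot\bz\) is bounded for bounded \(\bz\) and differentiation under the integral is justified. Hence
\[
    q_P(\bz)-q_P(\bw)=\int_0^1\partial_{\bm v}q_P(\bm\zeta(t))\,|\bz-\bw|\dd t,
    \qquad \bm v:=\frac{\bz-\bw}{|\bz-\bw|}.
\]
It therefore suffices to show that \(|\partial_{\bm v}q_P(\bz)|\le \frac{2\kappa}{\sqrt3}\) for every unit vector \(\bm v\) and every \(\bz\).

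First, differentiate. Since \(P\) is a self-adjoint idempotent, \(\|P\varphi_\bz\|_2^2=\ip{P\varphi_\bz}{\varphi_\bz}\). This gives
\[
\partial_{\bm v}q_P(\bz)=-2\operatorname{Re}\ip{P\varphi_\bz}{\partial_{\bm v}\varphi_\bz}.
\]
This mirrors the computation of \(\partial_{\bm v}(\widetilde q-q)\) in \cref{lem:perturbation}.

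A crude Cauchy--Schwarz bound, using \(\|P\|\le1\), \(\|\varphi_\bz\|_2=1\) and \(\|\partial_{\bm v}\varphi_\bz\|_2=\kappa/\sqrt3\) from the proof of \cref{lem:perturbation}, only gives \(2\kappa/\sqrt3\). That is exactly the claimed constant, so no refinement is needed. As a remark, one could sharpen it. Because \(\partial_{\bm v}\varphi_\bz=\ii\kappa(\bom\cdot\bm v)\varphi_\bz\), the quantity \(\ip{\varphi_\bz}{\partial_{\bm v}\varphi_\bz}\) is purely imaginary, so only the \((I-P)\) part of \(\varphi_\bz\) contributes to the real part. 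This refinement is unnecessary here.

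Combining the two steps gives \(|q_P(\bz)-q_P(\bw)|\le\frac{2\kappa}{\sqrt3}|\bz-\bw|\), independent of \(P\) and of the dimension of its range. In particular, the bound applies to \(P=P_{\widetilde\cU}\), so \(\widetilde q\) is \(L\)-Lipschitz with \(L=2\kappa/\sqrt3\). The only point requiring care is justifying differentiation of \(\bz\mapsto\varphi_\bz\) in \(\cH\). This follows from dominated convergence, since \(|\partial_{\bm v}e^{\ii\kappa\bom\cdot\bz}|\le\kappa\) uniformly on the sphere. I expect no genuine obstacle.
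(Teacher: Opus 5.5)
Your proposal is correct and follows essentially the same approach as the paper: differentiate to get \(\partial_{\bm v}q_P=-2\operatorname{Re}\langle P\varphi_\bz,\partial_{\bm v}\varphi_\bz\rangle\), which equals the paper's \(-2\operatorname{Re}\langle P\partial_{\bm v}\varphi_\bz,P\varphi_\bz\rangle\) because \(P^*P=P\). You then bound it by Cauchy--Schwarz using \(\|P\|\le1\), \(\|\varphi_\bz\|_2=1\), \(\|\partial_{\bm v}\varphi_\bz\|_2=\kappa/\sqrt3\), and integrate along the segment. Your added justification of differentiability via dominated convergence is a harmless extra detail the paper leaves implicit.
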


\begin{proof}
For a unit vector \(\bv\in\mathbb R^3\), differentiation gives
\[
    \partial_\bv q_P(\bz)
    =-2\operatorname{Re}
      \langle P\partial_\bv\varphi_\bz,
              P\varphi_\bz\rangle.
\]
Since \(P\) is a contraction, \(\|\varphi_\bz\|_2=1\), and
\[
    \|\partial_\bv\varphi_\bz\|_2
    =\left(
       \frac{\kappa^2}{4\pi}
       \int_{\mathbb S^2}(\bm\omega\cdot\bv)^2\,d\bm\omega
      \right)^{1/2}
    =\frac{\kappa}{\sqrt3},
\]
we have \(|\partial_\bv q_P(\bz)|\leq2\kappa/\sqrt3\).
Integrating along the segment from \(\bw\) to \(\bz\) proves
\eqref{eq:music-global-lipschitz}.
\end{proof}

Consequently, the thresholded set
	\(G_{\tau_1}\) contains at least one point in each certified
	neighborhood \(B_\rho(\bx_\ell)\). 		The point of \cref{lem:threshold} is that the initialization grid need not be refined to the final reconstruction scale \(\eps\).  The mesh size is instead governed by the neighborhood gap \(\rho-\eps\) and by the threshold gap \(\tau_1-\tau_0\), normalized by the Lipschitz scale \(\|\nabla f\|_{L^\infty}\).  This distinction is important in the small-noise regime: the final error parameter \(\eps\) may be much smaller than the spacing needed for a successful initialization.  Thus the thresholding step may be performed on a relatively coarse grid, as long as the grid resolves each certified neighborhood and the value of \(f\) cannot rise from \(\tau_0\) to \(\tau_1\) within one mesh step. There are other applications that can fit into this framework such as structured matrix and  exponential sum approximations, see \cite{structured-approx} for details.

\subsection{Linear convergence: proof of \cref{thm:certified-music-landscape} (v).}
We next give the algorithmic convergence statement associated with
the admissible MUSIC landscape initialized by certified thresholded set.
\begin{theorem}[Linear convergence]
\label{thm:convergence}
Assume all the hypotheses and numerical inequalities in the statement
of \cref{thm:certified-music-landscape}, and define
\[
    m:=C_0^-\kappa^2,
    \qquad
    M:=C_0^+\kappa^2.
\]
For a fixed step size \(h\) satisfying
\begin{equation}
\label{eq:step-size}
    0<h\leq\frac{2}{(C_0^-+C_0^+)\kappa^2}
    =\frac{2}{m+M},
\end{equation}
let
\[
    T_h(\bz):=\bz-h\nabla\widetilde q(\bz).
\]
Then, for every \(j=1,\ldots,s\),
\[
    T_h\left(\overline{B_\rho(\bx_j)}\right)
    \subset B_\rho(\bx_j).
\]
Moreover, \(T_h\) is a contraction on
\(\overline{B_\rho(\bx_j)}\) with contraction factor
\[
    r_h:=1-hm=1-hC_0^-\kappa^2\in[0,1).
\]
Consequently, every initialization
\(\bz^{(0)}\in\overline{B_\rho(\bx_j)}\) generates iterates
\[
    \bz^{(n+1)}=T_h(\bz^{(n)})
\]
that remain in \(B_\rho(\bx_j)\) and converge to the unique local
minimizer \(\widetilde\bx_j\). More precisely,
\begin{equation}
\label{eq:iterate-linear-rate}
    |\bz^{(n)}-\widetilde\bx_j|
    \leq
    r_h^n|\bz^{(0)}-\widetilde\bx_j|,
    \qquad n\geq0,
\end{equation}
and
\begin{equation}
\label{eq:objective-linear-rate}
    0
    \leq
    \widetilde q(\bz^{(n)})-\widetilde q(\widetilde\bx_j)
    \leq
    \frac{M}{2}r_h^{2n}
    |\bz^{(0)}-\widetilde\bx_j|^2.
\end{equation}
\end{theorem}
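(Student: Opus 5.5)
The argument is the standard one for gradient descent on a smooth, strongly convex function, localized to each certified well. Fix \(j\) and write \(B:=\overline{B_\rho(\bx_j)}\). By \cref{lem:hessian} (inequality \eqref{eq:7.21}), the Hessian satisfies \(mI_3\preceq\nabla^2\widetilde q\preceq MI_3\) on the convex set \(B\). For \(\bz,\bw\in B\) the fundamental theorem of calculus gives
\[
T_h(\bz)-T_h(\bw)=\Bigl(I_3-h\int_0^1\nabla^2\widetilde q(\bw+t(\bz-\bw))\,dt\Bigr)(\bz-\bw).
\]
The averaged Hessian \(H\) is symmetric with spectrum in \([m,M]\). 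Hence \(\|I-hH\|_2\le\max\{|1-hm|,|1-hM|\}\). Under \eqref{eq:step-size} we have \(hM\le 2M/(m+M)\), so \(1-hM\ge -(M-m)/(m+M)\ge-(1-hm)\). Combined with \(hm\le 2m/(m+M)<1\), this yields \(\|I-hH\|_2\le 1-hm=r_h\in[0,1)\). So \(T_h\) is an \(r_h\)-contraction on \(B\), provided we know \(T_h\) is evaluated only on points of \(B\).

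The main step is invariance, \(T_h(B)\subset B_\rho(\bx_j)\). \cref{lem:local_minima_matched} gives the unique critical point \(\widetilde\bx_j\in B_\rho(\bx_j)\), with \(\nabla\widetilde q(\widetilde\bx_j)=0\) and hence \(T_h(\widetilde\bx_j)=\widetilde\bx_j\). The contraction estimate with \(\bw=\widetilde\bx_j\) then gives
\[
|T_h(\bz)-\bx_j|\le |T_h(\bz)-\widetilde\bx_j|+|\widetilde\bx_j-\bx_j|\le r_h|\bz-\widetilde\bx_j|+\eps .
\]
Here \(\eps:=\tfrac{2\epssub}{\sqrt3C_0^-\kappa}\). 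Bounding \(|\bz-\widetilde\bx_j|\le\rho+\eps\) is too crude, so this is where I expect difficulty.

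I would first try to center the argument at \(\bx_j\) directly:
\[
T_h(\bz)-\bx_j=(I-hH_j)(\bz-\bx_j)-h\nabla\widetilde q(\bx_j),
\]
where \(H_j\) is the Hessian averaged along the segment from \(\bx_j\) to \(\bz\). With \eqref{eq:9.10} this gives \(|T_h(\bz)-\bx_j|\le(1-hm)\rho+h\tfrac{2}{\sqrt3}\kappa\epssub\). That bound is \(<\rho\) exactly when \(\tfrac{2}{\sqrt3}\kappa\epssub<m\rho=C_0^-\kappa\gamma\), i.e. \(\epssub<\tfrac{\sqrt3}{2}C_0^-\gamma\), which is \eqref{eq:9.1}. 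Condition \eqref{eq:9.1} follows from the noise hypothesis \eqref{eq:noise} together with the lower bound on \(C_0^-\) in \eqref{eq:C_0}. One must check that \(h>0\), so the inequality is strict. This gives invariance for every \(\bz\in B\), including boundary points.

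With invariance and contraction in hand, the iterates stay in \(B_\rho(\bx_j)\). The estimate \(|\bz^{(n+1)}-\widetilde\bx_j|=|T_h\bz^{(n)}-T_h\widetilde\bx_j|\le r_h|\bz^{(n)}-\widetilde\bx_j|\) iterates to \eqref{eq:iterate-linear-rate}. For \eqref{eq:objective-linear-rate}, I would Taylor expand \(\widetilde q\) around \(\widetilde\bx_j\), where the gradient vanishes, along a segment inside \(B\). The upper Hessian bound \(M\) gives \(0\le\widetilde q(\bz^{(n)})-\widetilde q(\widetilde\bx_j)\le\tfrac M2|\bz^{(n)}-\widetilde\bx_j|^2\); nonnegativity holds because \(\widetilde\bx_j\) minimizes \(\widetilde q\) on \(B\). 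Inserting the iterate bound finishes the proof. Uniqueness of the limit is the uniqueness of the critical point from \cref{lem:local_minima_matched}.
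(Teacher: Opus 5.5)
Your proposal is correct and follows essentially the same route as the paper. Contraction comes from the segment-averaged Hessian with $\|I_3-hQ\|_2\le 1-hm$, and invariance comes from centering at $\bx_j$ with $|\nabla\widetilde q(\bx_j)|\le\frac{2}{\sqrt3}\kappa\epssub<m\rho$ (the first branch of \eqref{eq:noise}). The objective bound follows by Taylor expansion at $\widetilde\bx_j$. The only cosmetic difference is how the limit is identified: you take it directly as the critical point supplied by \cref{lem:local_minima_matched}, which is a fixed point of $T_h$. The paper instead first invokes the Banach fixed-point theorem and then identifies that fixed point with $\widetilde\bx_j$.
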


\label{sec:convergence-proof}
\begin{proof}
Fix \(j\in\{1,\ldots,s\}\), and abbreviate
\[
    \overline B_j:=\overline{B_\rho(\bx_j)}.
\]
By \eqref{eq:main-hessian-bounds},
\begin{equation}
\label{eq:cor-hessian-interval}
    mI_3
    \preceq
    \nabla^2\widetilde q(\bz)
    \preceq
    MI_3,
    \qquad
    \bz\in\overline B_j.
\end{equation}
Let \(Q\) be any real symmetric matrix satisfying
\(mI_3\preceq Q\preceq MI_3\). For
\(0<h\leq2/(m+M)\), every \(\lambda\in[m,M]\) satisfies
\[
    |1-h\lambda|\leq1-hm.
\]
Hence
\begin{equation}
\label{eq:matrix-contraction-bound}
    \|I_3-hQ\|_2\leq r_h,
    \qquad
    r_h:=1-hm\in[0,1).
\end{equation}

For \(\bz\in\overline B_j\), the fundamental theorem of calculus
gives
\[
    \nabla\widetilde q(\bz)
    =
    \nabla\widetilde q(\bx_j)
    +
    Q_{\bz,j}(\bz-\bx_j),
\]
where
\[
    Q_{\bz,j}
    :=
    \int_0^1
    \nabla^2\widetilde q
    \bigl(\bx_j+t(\bz-\bx_j)\bigr)\,dt.
\]
The ball is convex, so \eqref{eq:cor-hessian-interval} implies
\(mI_3\preceq Q_{\bz,j}\preceq MI_3\). Therefore
\[
    T_h(\bz)-\bx_j
    =
    (I_3-hQ_{\bz,j})(\bz-\bx_j)
    -
    h\nabla\widetilde q(\bx_j),
\]
and
\[
    |T_h(\bz)-\bx_j|
    \leq
    r_h|\bz-\bx_j|
    +
    h|\nabla\widetilde q(\bx_j)|.
\]
Since \(q\geq0\) and \(q(\bx_j)=0\), one has
\(\nabla q(\bx_j)=0\). By \cref{lem:perturbation},
\[
    |\nabla\widetilde q(\bx_j)|
    \leq
    \frac{2}{\sqrt3}\kappa\epssub.
\]
The first branch of \eqref{eq:noise}, together with
\(\gamma=\kappa\rho\), is exactly
\begin{equation}
\label{eq:gradient-smaller-than-restoring-force}
    |\nabla\widetilde q(\bx_j)|
    <
    C_0^-\kappa^2\rho
    =
    m\rho.
\end{equation}
Consequently, for \(\bz\in\overline B_j\),
\[
\begin{aligned}
    |T_h(\bz)-\bx_j|
    &\leq
    (1-hm)\rho
    +
    h|\nabla\widetilde q(\bx_j)|\\
    &<
    (1-hm)\rho+hm\rho
    =
    \rho.
\end{aligned}
\]
This proves invariance.

For \(\bz,\bw\in\overline B_j\), define instead
\[
    Q_{\bz,\bw}
    :=
    \int_0^1
    \nabla^2\widetilde q
    \bigl(\bw+t(\bz-\bw)\bigr)\,dt.
\]
Convexity again gives
\(mI_3\preceq Q_{\bz,\bw}\preceq MI_3\), and hence
\[
\begin{aligned}
    |T_h(\bz)-T_h(\bw)|
    &=
    |(I_3-hQ_{\bz,\bw})(\bz-\bw)|\\
    &\leq
    r_h|\bz-\bw|.
\end{aligned}
\]
Thus \(T_h\) is a strict contraction of the complete metric space
\(\overline B_j\) into itself. The Banach fixed-point theorem gives a
unique fixed point and the estimate \eqref{eq:iterate-linear-rate}.
Because \(h>0\), a fixed point of \(T_h\) is a critical point of
\(\widetilde q\). By \cref{lem:local_minima_matched}, it is precisely
\(\widetilde\bx_j\).

Finally, Taylor's formula on the segment joining
\(\widetilde\bx_j\) and \(\bz^{(n)}\), together with
\(\nabla\widetilde q(\widetilde\bx_j)=0\) and the Hessian upper bound,
gives
\[
    0
    \leq
    \widetilde q(\bz^{(n)})-\widetilde q(\widetilde\bx_j)
    \leq
    \frac M2|\bz^{(n)}-\widetilde\bx_j|^2.
\]
Combining this with \eqref{eq:iterate-linear-rate} proves
\eqref{eq:objective-linear-rate}. The final assertions follow by
minimizing
\(\max\{|1-hm|,|1-hM|\}\), whose two endpoint terms agree at
\(h=2/(m+M)\).
\end{proof}

\subsection{Proof of \cref{thm:certified-music-landscape}}\label{sec:thm-proof}

\begin{proof}
Collecting the above results, we have the following.

By the first branch of  the noise condition in \eqref{eq:noise},
$$
C^-_0=	\frac23
			-
			C_2
			-
			\left(
			\frac{2}{\sqrt5}+\frac23\right) 	\eps_{\rm sub} > \frac{
            \frac23-C_2
       }{1+\frac{\sqrt3}{2}\gamma \left(\frac{2}{\sqrt5}+\frac23\right)}>0
       $$
and hence \cref{lem:hessian} implies \cref{thm:certified-music-landscape} (i). 

\cref{lem:outside_lower_matched} implies \cref{thm:certified-music-landscape} (ii);

Solving for $\epssub$ from the inequality implicit in \eqref{eq:9.1} and the definition of $C_0^-$ gives exactly the first branch of  the noise condition in \eqref{eq:noise} and hence
\cref{lem:local_minima_matched} implies \cref{thm:certified-music-landscape} (iii). 

\cref{lem:threshold} and \cref{lem:music-global-lipschitz} imply \cref{thm:certified-music-landscape} (iv).

\cref{thm:convergence} implies \cref{thm:certified-music-landscape} (v).

The proof of \cref{thm:certified-music-landscape} is complete. 

\end{proof}

\subsection{Proof of \cref{thm:localization-modulus} }\label{sec:localization-modulus}
\begin{proof}
We first prove the upper bound.  If
\(X\in\mathfrak X_{s,\kappa}(\gamma,\overline\mu)\), then
\(\nu_X\leq\overline\nu\).  Every term defining \(C_1\) and
\(C_2\) is nondecreasing in \(\mu_X\) and \(\nu_X\), so
\[
    C_2\leq\overline C_2.
\]
Moreover, for
\[
    F(\mu,\nu):=\frac{b_\gamma-\mu-\nu}{1-\mu}
\]
one has
\[
    \partial_\mu F
    =\frac{b_\gamma-1-\nu}{(1-\mu)^2}\leq0,
    \qquad
    \partial_\nu F=-\frac1{1-\mu}<0.
\]
Thus \(F\) decreases in each of
\(\mu\in[0,1)\) and \(\nu\geq0\), and hence
\begin{equation}
\label{eq:oracle-A-uniformization}
    A_\gamma\geq A_\star.
\end{equation}
The definition of \(\eta_{\rm loc}\) is exactly the solution of
\[
    \eta<\frac{\sqrt3}{2}\gamma
       \left(\frac23-\overline C_2-\left(\frac{2}{\sqrt5}+\frac23\right)\eta\right)
    =\frac{\sqrt3}{2}\gamma c_\eta,
\]
while \(\eta<\eta_{\rm gap}\) is exactly the sharp angular
threshold condition for \(A_\star\).  Thus the proof of
\cref{thm:certified-music-landscape}, used with the conservative constants
\(c_\eta\) and \(A_\star\), yields one unique minimum
\(\widetilde\bx_j\) in every \(B_\rho(\bx_j)\), with
\begin{equation}
\label{eq:oracle-upper-localization}
    |\widetilde\bx_j-\bx_j|
    \leq\frac{2}{\sqrt3c_\eta}\frac{\eta}{\kappa}.
\end{equation}

This also defines an estimator without knowledge of \(X\).  Given
\(\widetilde{\mathcal U}\), all local minima of $ \widetilde q$ have value strictly below
$$  \tau_\eta
    :=\sin^2\left(
       \arcsin\sqrt{A_\star}-\arcsin\eta
    \right),$$  whenever there are exactly \(s\) of them.  Otherwise
return an arbitrary fixed \(s\)-point cloud.  Each relevant minimum
has value at most \(\eta^2<\tau_\eta\), whereas
\eqref{eq:angular-outside-lower} and
\eqref{eq:oracle-A-uniformization} imply
\(\widetilde q \geq\tau_\eta\) outside all
the wells.  Strong convexity gives exactly one critical point inside
each well.  Hence  \eqref{eq:oracle-upper-localization}
proves \eqref{eq:oracle-upper-bound}.

For the lower bound, let \(\Phi_t:\mathbb C^s\to\mathcal H\) be the
synthesis operator associated with \(X_t\).  The frame lower bound
along \eqref{eq:oracle-displacement-path} gives
\[
    \sigma_{\min}(\Phi_t)^2
    =\lambda_{\min}(G_{X_t})
    \geq1-\mu_{X_t}\geq 1-\overline\mu.
\]
Only the first column of \(\Phi_t\) varies, and
\begin{align*}
    \|\Phi_t-\Phi_0\|_2^2
    &\leq
    \|\varphi_{\bx_1+t\bv}-\varphi_{\bx_1}\|_2^2=\frac1{4\pi}\int_{\mathbb S^2}
       |e^{\ii\kappa t\bm\omega\cdot\bv}-1|^2\,d\bm\omega\leq\frac{\kappa^2t^2}{3}.
\end{align*}
For equal-rank, full-column-rank operators \(A,B\), the two
one-sided gap estimates give
\begin{equation}
\label{eq:synthesis-projector-perturbation}
    \|P_{\operatorname{Ran}A}-P_{\operatorname{Ran}B}\|_2
    \leq
    \frac{\|A-B\|_2}
     {\min\{\sigma_{\min}(A),\sigma_{\min}(B)\}}.
\end{equation}
Indeed, for \(y=Bc\in\operatorname{Ran}B\), \(\|y\|=1\),
\[
    \|(I-P_{\operatorname{Ran}A})y\|_2
    \leq\|A-B\|_2\,\|B^\dagger\|_2,
\]
and the reverse gap follows by interchanging \(A\) and \(B\).
For equal-dimensional subspaces the two one-sided gaps equal the
projector distance.  Applying \eqref{eq:synthesis-projector-perturbation}
to \(\Phi_0,\Phi_t\) yields
\begin{equation}
\label{eq:path-projector-distance}
    \|P_{\mathcal U_{X_t}}-P_{\mathcal U_{X_0}}\|_2
    \leq\frac{\kappa t}{\sqrt{3(1-\overline\mu)}}.
\end{equation}

Choose \(t=\sqrt{3(1-\overline\mu)}\eta/\kappa\).  The second condition in
\eqref{eq:oracle-two-sided-range} ensures \(t\leq t_0\).  The single
observation \(\widetilde{\mathcal U}=\mathcal U_{X_t}\) is then
compatible both with the truth \(X_t\), with zero error, and with
the truth \(X_0\), with error at most \(\eta\).  Since
\(t\leq t_0\leq\delta_{X_0}/4\), the identity matching has cost
\(t\), while every nonidentity matching has cost greater than \(t\).
Thus \(d_{\rm match}(X_0,X_t)=t\).  For every estimator
\(\widehat X\), the triangle inequality gives
\[
    \max\left\{
       d_{\rm match}(\widehat X(\widetilde{\mathcal U}),X_0),
       d_{\rm match}(\widehat X(\widetilde{\mathcal U}),X_t)
    \right\}
    \geq\frac t2
    =\frac{\sqrt{3(1-\overline\mu)}}2\frac{\eta}{\kappa}.
\]
Taking the infimum over estimators proves the lower half of
\eqref{eq:oracle-two-sided-bound}; the upper half was already proved.
\end{proof}

	\section{Threshold-initialized MUSIC algorithm}\label{sec:algorithm}

We now turn MUSIC optimization theorems into a completely specified
initialization-and-descent procedure.  The formulation below permits
certified upper and lower bounds in place of the exact cloud-dependent
constants.

Suppose we are given an upper bound $\eta$ of $\epssub$, i.e. $\epssub\le\eta$, and 
suppose that an a priori parameter calculation supplies numbers
\(\Delta_\star\) and \(A_\star\) such that
\begin{equation}
\label{eq:algorithm-certificates}
    C_2
      +\left(\frac{2}{\sqrt5}+\frac23\right)\eta
    \leq\Delta_\star<\frac23,
    \qquad
    0<A_\star\leq A_\gamma. 
\end{equation}
Define
\begin{equation}
\label{eq:algorithm-certified-constants}
    c^-_\star:=\frac23-\Delta_\star,
    \qquad
    c^+_\star:=\frac23+\Delta_\star,
    \qquad
    \eps_\star
    :=\frac{2\eta}{\sqrt3c^-_\star\kappa},
\end{equation}
and assume
\begin{equation}
\label{eq:algorithm-noise-certificates}
    \eta<\frac{\sqrt3}{2}\gamma c^-_\star,
    \qquad
    \eta<
    \sin\left(\frac12\arcsin\sqrt{A_\star}\right).
\end{equation}
The corresponding certified outside threshold and optimal scalar step
are
\begin{equation}
\label{eq:algorithm-threshold-step}
    \tau_\star
    :=
    \sin^2\left(
        \arcsin\sqrt{A_\star}-\arcsin\eta
    \right),
    \qquad
    h_\star
    :=\frac{2}{(c^-_\star+c^+_\star)\kappa^2}
    =\frac{3}{2\kappa^2}.
\end{equation}
The associated certified contraction factor is
\begin{equation}
\label{eq:algorithm-contraction-factor}
    r_\star
    :=\frac{c^+_\star-c^-_\star}
            {c^+_\star+c^-_\star}
    =\frac32\Delta_\star<1.
\end{equation}
Notice that \eqref{eq:algorithm-noise-certificates} gives both
\(\eps_\star<\rho\) and \(\eta^2<\tau_\star\).

Let \(G\subset B_1(0)\) be a finite grid satisfying
\begin{equation}
\label{eq:music-grid-condition}
    \mesh_{B_1}(G)
    <
    \min\left\{
       \rho-\eps_\star,
       \frac{\sqrt3}{2\kappa}
       \bigl(\tau_\star-\eta^2\bigr)
    \right\}.
\end{equation}
For the observed projector \(P_{\widetilde{\mathcal U}}\), form
\[
    \widetilde q(\bz)
    =1-\|P_{\widetilde{\mathcal U}}\varphi_\bz\|_2^2,
    \qquad
    G_\star:=\{\bm g\in G:\widetilde q(\bm g)<\tau_\star\}.
\]
For a certified finite output, let \(0<\delta_\star\leq\delta_X\)
be any available separation lower bound, and choose \(N\) and a
clustering radius \(r_{\rm cl}\) such that
\begin{equation}
\label{eq:algorithm-clustering-certificate}
\begin{split}
    8\rho r_\star^N
    &<\delta_\star-2\eps_\star,\\
    4\rho r_\star^N
    &<r_{\rm cl}
      <\delta_\star-2\eps_\star-4\rho r_\star^N.
\end{split}
\end{equation}
The derivative needed by the descent step is available directly from
the projector: for every direction \(\bv\),
\begin{equation}
\label{eq:music-directional-gradient}
    \partial_\bv\widetilde q(\bz)
    =-2\operatorname{Re}
      \langle
        P_{\widetilde{\mathcal U}}
        \partial_\bv\varphi_\bz,
        P_{\widetilde{\mathcal U}}\varphi_\bz
      \rangle,
    \qquad
    \partial_\bv\varphi_\bz(\bm\omega)
    =\ii\kappa(\bm\omega\cdot\bv)\varphi_\bz(\bm\omega).
\end{equation}

\begin{algorithm}[H]
\caption{Certified coarse-grid spherical MUSIC}
\label{alg:coarse-grid-music}
\begin{algorithmic}[1]
\Require \(P_{\widetilde{\mathcal U}},\kappa,G,\eta,
          \gamma,\Delta_\star,A_\star,\delta_\star,N,r_{\rm cl}\),
          satisfying \eqref{eq:algorithm-certificates}, \eqref{eq:algorithm-noise-certificates}, \eqref{eq:algorithm-contraction-factor}, \eqref{eq:algorithm-clustering-certificate}
\State Compute \(c^-_\star,c^+_\star,\eps_\star,
                  \tau_\star,h_\star,r_\star\) from
       \eqref{eq:algorithm-certified-constants} and
       \eqref{eq:algorithm-threshold-step}--
       \eqref{eq:algorithm-contraction-factor}
\State \(G_\star\gets
       \{\bm g\in G:\widetilde q(\bm g)<\tau_\star\}\)
\State \(Y_N\gets\varnothing\)
\ForAll{\(\bm g\in G_\star\)}
    \State \(\bz^{(0)}\gets\bm g\)
    \For{\(n=0,\ldots,N-1\)}
        \State \(\bz^{(n+1)}\gets
          \bz^{(n)}-h_\star\nabla\widetilde q(\bz^{(n)})\)
    \EndFor
    \State \(Y_N\gets Y_N\cup\{\bz^{(N)}\}\)
\EndFor
\State Join two endpoints when their distance is less than
       \(r_{\rm cl}\)
\State \Return  one representative from each connected component. \end{algorithmic}
\end{algorithm}
It can be shown that under a stronger condition $\delta_X> 4\rho$, there are exactly $s$ geometric clusters and gradient descent initialized from one element from each cluster recovers the relevant minima. 

\begin{theorem}[Correctness of the certified algorithm]
\label{thm:coarse-grid-music-correctness}
Assume the geometric hypotheses of
\cref{thm:certified-music-landscape}. Let
\[
    \eta\geq\eps_{\rm sub},
    \qquad
    0<\delta_\star\leq\delta_X.
\]
Assume the parameter and noise certificates
\eqref{eq:algorithm-certificates}--%
\eqref{eq:algorithm-noise-certificates},
the grid condition \eqref{eq:music-grid-condition}, and the finite
clustering certificate
\eqref{eq:algorithm-clustering-certificate}.
Then
\begin{enumerate}[label=\textup{(\roman*)}]
\item every accepted point lies in a certified well:
\[
    G_\star\subset
    \bigcup_{j=1}^sB_\rho(\bx_j);
\]
\item every well contains at least one accepted point;
\item for every \(\bm g\in G_\star\), the iterates in
\cref{alg:coarse-grid-music} remain in one well and converge to its
unique minimizer;
\item after exact duplicate removal, the set of limiting outputs is
precisely \(\{\widetilde\bx_1,\ldots,\widetilde\bx_s\}\), and
\begin{equation}
\label{eq:algorithm-matching-error}
    d_{\rm match}
      \bigl(\{\widetilde\bx_1,\ldots,\widetilde\bx_s\},X\bigr)
    \leq\eps_\star
    =\frac{2\eta}{\sqrt3c^-_\star\kappa}.
\end{equation}

\end{enumerate}
An accepted initialization in the \(j\)-th well obeys the
finite-iteration estimate
\begin{equation}
\label{eq:algorithm-finite-iteration-error}
    |\bz^{(N)}-\widetilde\bx_j|
    \leq2\rho\,r_\star^N.
\end{equation}
Moreover, the finite procedure in \cref{alg:coarse-grid-music}
returns exactly \(s\) representatives \(\widehat X_N\), and
\begin{equation}
\label{eq:algorithm-finite-matching-error}
    d_{\rm match}(\widehat X_N,X)
    \leq\eps_\star+2\rho r_\star^N.
\end{equation}
\end{theorem}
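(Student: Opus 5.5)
The plan is to reduce the statement to \cref{thm:certified-music-landscape} by checking that the certified constants dominate the true ones. Then \cref{lem:threshold}, \cref{thm:convergence} and a finite clustering argument finish the proof.

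\emph{Step 1: monotone comparison.} By \eqref{eq:algorithm-certificates}, $\frac23-C_2-(\frac2{\sqrt5}+\frac23)\epssub\ge c^-_\star$ and $C_0^+\le c^+_\star$, using $\epssub\le\eta$. Hence \cref{lem:hessian} gives $c^-_\star\kappa^2 I_3\preceq\nabla^2\widetilde q\preceq c^+_\star\kappa^2I_3$ on each $\overline{B_\rho(\bx_j)}$. The first inequality in \eqref{eq:algorithm-noise-certificates} implies \eqref{eq:9.1}. So \cref{lem:local_minima_matched} yields a unique minimizer $\widetilde\bx_j$ with $|\widetilde\bx_j-\bx_j|\le\eps_\star<\rho$ and $\widetilde q(\widetilde\bx_j)\le\eta^2$. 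Since $A_\gamma\ge A_\star$ and $\arcsin\epssub\le\arcsin\eta$, \cref{lem:outside_lower_matched} gives $\widetilde q\ge\tau_\star$ outside the wells. Thus $\widetilde q$ is threshold-admissible with parameters $(\eps_\star,\rho,\eta^2,\tau_\star)$. Combining \cref{lem:threshold} with the Lipschitz constant $2\kappa/\sqrt3$ from \cref{lem:music-global-lipschitz} and the grid condition \eqref{eq:music-grid-condition} proves (i) and (ii).

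\emph{Step 2: descent.} I would rerun the proof of \cref{thm:convergence} with $m=c^-_\star\kappa^2$, $M=c^+_\star\kappa^2$ and $h_\star=2/(m+M)$. For $\lambda\in[m,M]$ we have $|1-h_\star\lambda|\le (M-m)/(M+m)=r_\star$. The invariance inequality needs $|\nabla\widetilde q(\bx_j)|\le\frac2{\sqrt3}\kappa\eta<m\rho$, which is again the first certificate. This gives invariance of $\overline{B_\rho(\bx_j)}$, contraction with factor $r_\star$, and convergence to $\widetilde\bx_j$, proving (iii). The initial distance is at most $2\rho$, since both $\bg$ and $\widetilde\bx_j$ lie in the ball, so $|\bz^{(N)}-\widetilde\bx_j|\le 2\rho r_\star^N$, which is \eqref{eq:algorithm-finite-iteration-error}. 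For (iv), the limits form exactly $\{\widetilde\bx_j\}$: every well is seeded by (ii), and by (i) no other limits occur. The matching error follows from the identity labeling.

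\emph{Step 3: clustering, the main obstacle.} Endpoints from the same well lie within $4\rho r_\star^N<r_{\rm cl}$ of each other, so they are joined. Endpoints from different wells $j\neq k$ are at distance at least
\[
|\widetilde\bx_j-\widetilde\bx_k|-4\rho r_\star^N\ge\delta_\star-2\eps_\star-4\rho r_\star^N>r_{\rm cl}.
\]
So no edge crosses between wells, and the connected components are exactly the $s$ per-well groups. Any representative $\widehat\bx_j$ then satisfies $|\widehat\bx_j-\bx_j|\le\eps_\star+2\rho r_\star^N$, which gives \eqref{eq:algorithm-finite-matching-error}. The delicate point is making sure that single-linkage chaining cannot merge distinct wells. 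The argument above handles this because every edge stays within a single well, so no chain of edges can leave it.
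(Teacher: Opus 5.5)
Your proposal is correct and follows the paper's proof essentially step by step. You dominate the true constants by the certified ones via $\epssub\le\eta$ and $A_\gamma\ge A_\star$, apply \cref{lem:threshold} with parameters $(\eps_\star,\eta^2,\tau_\star,2\kappa/\sqrt3)$, rerun the invariant-well contraction argument of \cref{thm:convergence} with $h_\star$, and separate clusters by comparing $4\rho r_\star^N$ with $\delta_\star-2\eps_\star-4\rho r_\star^N$; your explicit checks that $C_0^-\ge c^-_\star$ and $C_0^+\le c^+_\star$, and that no single-linkage edge crosses between wells, just spell out steps the paper leaves implicit.
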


\begin{remark}[Finite stopping and duplicate removal]
Condition \eqref{eq:algorithm-clustering-certificate} is a sufficient,
fully checkable finite-stopping rule.  If no quantitative lower bound
\(\delta_\star\) is available, one may omit the clustering step,
retain the endpoint set \(Y_N\), and use
\eqref{eq:algorithm-finite-iteration-error} to monitor convergence.
The set of distinct trajectory limits still consists exactly of the
\(s\) relevant minima.
\end{remark}
\begin{proof}
The Hessian certificates give, throughout each certified well,
\[
    c^-_\star\kappa^2I_3
    \preceq\nabla^2\widetilde q
    \preceq c^+_\star\kappa^2I_3.
\]
The perturbation estimate at a true atom and the first inequality in
\eqref{eq:algorithm-noise-certificates} give
\[
    |\nabla\widetilde q(\bx_j)|
    \leq\frac{2\kappa\eta}{\sqrt3}
    <c^-_\star\kappa^2\rho.
\]
Thus the invariant-well proof of \cref{thm:convergence}, with the
certified interval in place of the exact interval, applies to
\(h_\star\).  It gives invariance, contraction, and
\eqref{eq:algorithm-contraction-factor}.  It also gives
\eqref{eq:algorithm-finite-iteration-error}, because both the
initial point and the minimizer belong to the same radius-\(\rho\)
ball.

By \eqref{eq:angular-outside-lower},
\(A_\gamma\geq A_\star\), and \(\epssub\leq\eta\), every point
outside the union of the wells satisfies
\[
    \widetilde q(\bz)\geq\tau_\star.
\]
This proves \textup{(i)}.  On the other hand, each relevant minimizer
has value at most \(\epssub^2\leq\eta^2\).  The global Lipschitz
bound and \eqref{eq:music-grid-condition} therefore allow
\cref{lem:threshold} to be applied with
\[
    (\eps,\tau_0,\tau_1,L)
    =\left(
       \eps_\star,\eta^2,\tau_\star,
       \frac{2\kappa}{\sqrt3}
      \right).
\]
It follows that every well contains an accepted grid point, proving
\textup{(ii)}.  The contraction result proves \textup{(iii)}, and
then \textup{(iv)} follows after duplicate limits are removed.  The
localization part of \cref{lem:local_minima_matched} gives
\eqref{eq:algorithm-matching-error}.

It remains only to justify the finite clustering step.  Two endpoints
converging to the same minimizer are, by
\eqref{eq:algorithm-finite-iteration-error}, at distance at most
\(4\rho r_\star^N\).  If two endpoints converge to minima in
different wells, their distance is at least
\[
    \delta_X-2\eps_\star-4\rho r_\star^N
    \geq
    \delta_\star-2\eps_\star-4\rho r_\star^N.
\]
The two strict inequalities in
\eqref{eq:algorithm-clustering-certificate} therefore make the graph
components coincide exactly with the \(s\) well groups.  Every chosen
representative from the \(j\)-th group lies within
\(2\rho r_\star^N\) of \(\widetilde\bx_j\), which proves
\eqref{eq:algorithm-finite-matching-error}.
\end{proof}

\section{Numerical experiment}\label{sec:num}

The target 125-point clouds are generated from $5\times 5\times 5$-scaffold of pitch \(0.20\), with isotropic radial jitter between \(0.05\) and \(0.1\), centering of the jitters, a random Haar rotation, and a translation of norm at most \(0.02\), resulting in minimum separations
\beq
\label{eq:delta-range}
 0.03251235\leq\delta_X\leq0.07315914.
 \eeq
The measured frequency range across all clouds and sweep values is
\beq
\label{eq:k-range}
8093.39\leq\kappa\leq346022.37.
\eeq

For \(\gamma=7/12\), the certificate transitions occur at approximately
\[
\varepsilon_{\rm sub}<0.0477
\quad\text{when }\mu_X=0.1,
\] and
\[
\mu_X<0.0715
\quad\text{when }\varepsilon_{\rm sub}=0.1.
\]
The gray zones of the plots are uncertified, but they are still reconstructed by using the empirical threshold
\[
\tau=\varepsilon_{\rm sub}^{\,2}+10^{-3}.
\]
The experiment does not use an adversarial complement aligned with the spatial derivatives of the target Fourier atoms. Instead, the perturbation is constructed using 250 auxiliary Fourier atoms whose spatial centers lie in the ball of radius \(0.9\), with each auxiliary center at least \(0.04\) from every target and from every other auxiliary center. It forms \(s=125\) random complex mixtures of these atoms, projects those mixtures onto the orthogonal complement of the noiseless signal subspace, and re-orthonormalizes them to obtain an orthonormal complementary basis \(W\), satisfying
\(U^*U=W^*W=I_s,\) \( U^*W=0.\)
The observed subspace basis is then obtained by the equal-angle rotation
\[
\widetilde U
=
\sqrt{1-\varepsilon_{\rm sub}^2}\,U
+
\varepsilon_{\rm sub}W.
\]
We compare typical localization errors for a structured random-complement perturbation with the deterministic finite-output upper certificate \eqref{eq:algorithm-finite-matching-error} with $N=12$. 
\begin{figure}[ht]
\centering
\begin{subfigure}{0.45\textwidth}
\includegraphics[width=1\textwidth]{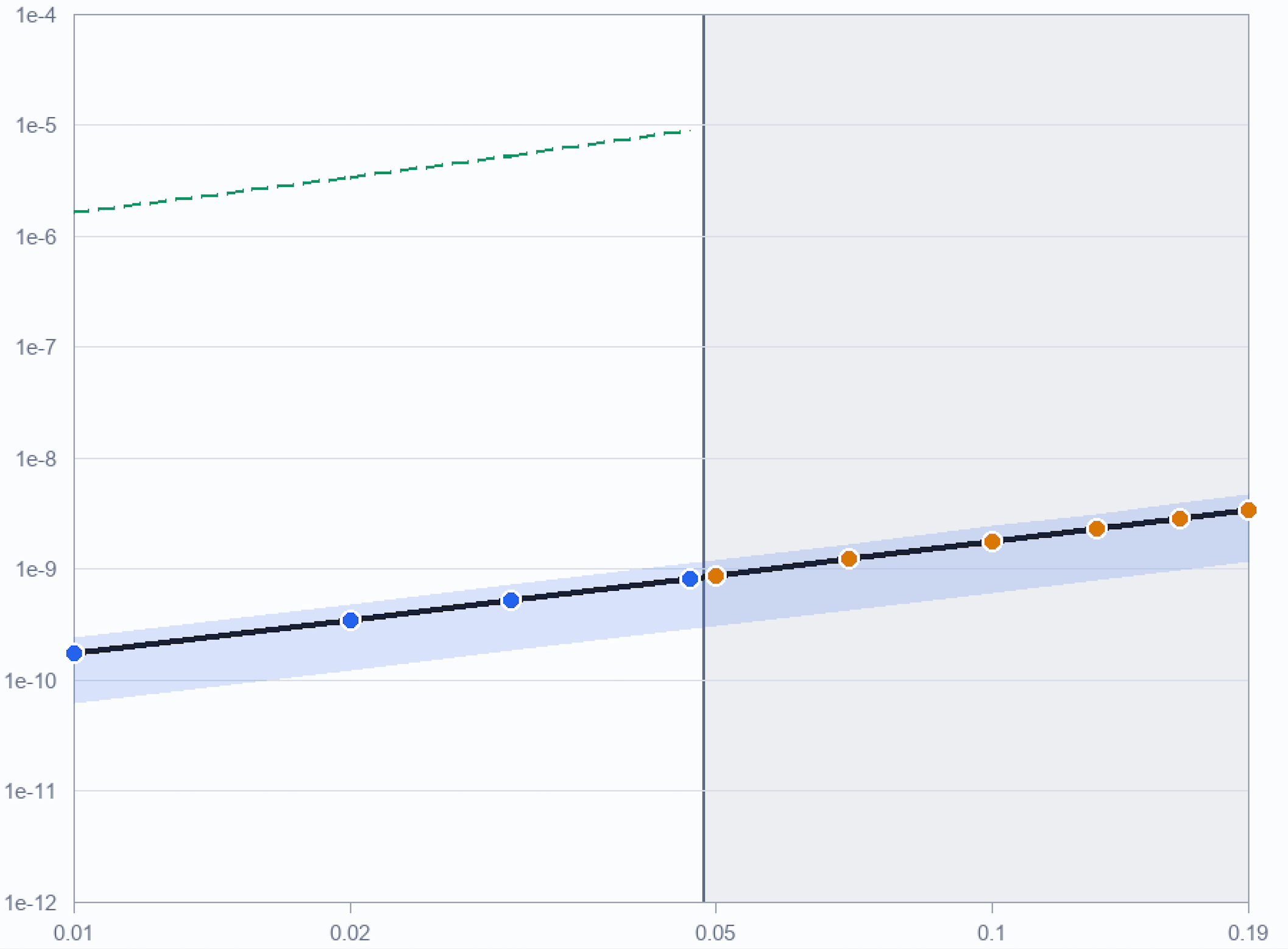}
\caption{$\mu_X=0.1$}
\end{subfigure}\qquad
\begin{subfigure}{0.45\textwidth}
\includegraphics[width=1\textwidth]{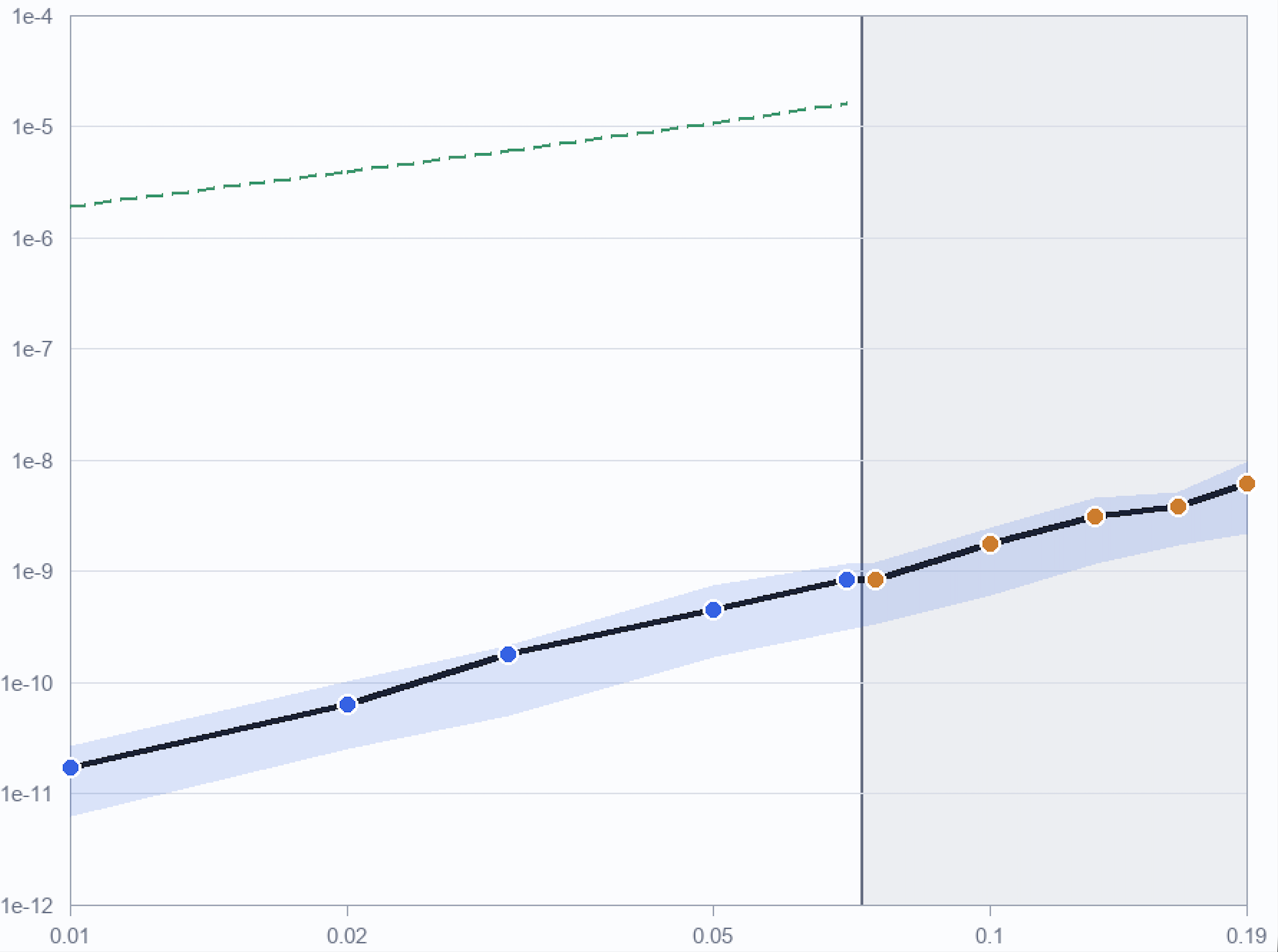}
\caption{$\epssub=0.1$}
\end{subfigure}
\caption{log-log plot of localization error for $s=125$ versus (left) $\epssub\in [0.01,0.19]$ with fixed $\mu_X=0.1$ \& (right) $\mu_X\in [0.01, 0.19]$ with fixed $\epssub=0.1, $ both with $\gamma=7/12,$ 12 gradient descent steps.  The wavenumber is $\kappa=112.5/(\mu_X \delta_X)$. 10  trials  are performed for each case with recovery rate $=1$ in all cases. The (blue and orange) solid points  and the solid lines connecting them are the median localization error and the dotted lines are the certified error bound. The gray zones are   certificate-infeasible. The blue bands indicate the full range of 10 trials.}
\label{fig:numerical-error}
\end{figure}

A log–log least-squares fit to the median final errors  in \cref{fig:numerical-error} gives
\beq
\operatorname{median\,\,error}
\asymp
\varepsilon_{\rm sub}^{\,1.00547},&
\qquad R^2=0.999986,&\qquad \hbox{when}\,\,\mu_X=0.1
\eeq and
\beq
\operatorname{median\,\,error}
\asymp
\mu_X^{\,1.98223},&
\qquad R^2=0.997971, &\qquad \hbox{when}\,\,\epssub=0.1.
\eeq Thus the reconstructed locations show essentially linear dependence on \(\varepsilon_{\rm sub}\) and quadratic dependence on \(\mu_X\), across both certified and uncertified regimes.

An unsatisfactory feature of the present work is that under the packing condition $\kappa\delta_X\gtrsim s^{2/3}$ the number of points in a certified thresholding grid  for a large $s$ is enormous, rendering Algorithm 1 computationally  impractical. One possible solution is via adaptive gridding instead of a uniform grid. This is beyond the scope of the paper.

\subsection{Heuristic explanation for the observed quadratic scaling}
Near a true target \(\bx_j\), write the perturbed MUSIC objective as a quadratic bowl plus a noise-induced tilt. For a small displacement \(d\),
\[
\nabla\widetilde q(\bx_j+\bd)
\approx
\bg_j+H(\bx_j)d,
\] where
\[
\bg_j:=\nabla\widetilde q(\bx_j)
\]is the perturbation-induced gradient at the true target.
The manuscript proves that the curvature of the MUSIC well is of order
\[
H(\bx_j)\asymp \kappa^2I.
\] This is the Hessian bound. The displaced local minimum \(\widetilde \bx_j=\bx_j+\bd_j\) therefore approximately satisfies
\[
0=\bg_j+H(\bx_j)\bd_j,
\] so that
\[
\bd_j\approx-H(\bx_j)^{-1}\bg_j,
\qquad
|\bd_j|\asymp\frac{|\bg_j|}{\kappa^2}.
\]Thus the power of \(\mu_X\) is determined by how the realized perturbation gradient \(\bg_j\) depends on \(\kappa\).

For a true atom \(\varphi_{\bx_j}=Ua_j\), orthogonality gives \(W^*\varphi_{\bx_j}=0\). Differentiating the perturbed objective gives, in any direction \(v\),
\[
\partial_v\widetilde q(\bx_j)
=
-2\varepsilon_{\rm sub}
 \sqrt{1-\varepsilon_{\rm sub}^2}\,
 \operatorname{Re}
 \left\langle
 W^*\partial_v\varphi_{\bx_j},a_j
 \right\rangle.
\]The size of the gradient is therefore governed by derivative correlations between the target atoms and the auxiliary atoms.
For two separated locations with distance \(r>0\), the spherical Fourier kernel is
\[
K_\kappa(r)=\frac{\sin(\kappa r)}{\kappa r},
\] and
\[
\left|K_\kappa'(r)\right|
=
\left|
\frac{\kappa r\cos(\kappa r)-\sin(\kappa r)}
     {\kappa r^2}
\right|
\leq
\frac1r+\frac1{\kappa r^2}.
\]Because the auxiliary centers stay a fixed positive distance from the targets, these derivative correlations are approximately \(O(1)\) as \(\kappa\) increases—not \(O(\kappa)\). The high-frequency oscillations decorrelate the separated Fourier atoms.
Thus, for this particular random complement,
\[
|\bg_j|
\approx O(\varepsilon_{\rm sub}),
\]rather than the worst-case \(O(\kappa\varepsilon_{\rm sub})\). Since the Hessian remains \(O(\kappa^2)\),
\[
\eps_{\rm loc}
\asymp
\frac{|\bg_j|}{\kappa^2}
\approx
O\!\left(\frac{\varepsilon_{\rm sub}}{\kappa^2}\right)
\] 
and therefore a quadratic dependence on $\mu_X$. 

\section{Conclusion and outlook}
\label{sec:conclusion}

We have established a quantitative theory for recovering a finite
three-dimensional point cloud from a perturbed spherical Fourier
subspace. Under explicit separation, boundary-containment, conditioning,
and noise hypotheses, the MUSIC objective has one strongly convex well
near each scatterer and a uniform value gap outside the certified wells.
These properties connect a geometric stability estimate to a complete
localization procedure: thresholding supplies an initialization in every
well, a fixed-step gradient iteration remains in that well and converges
linearly, and certified clustering removes duplicate outputs. The
resulting finite-iteration error separates the subspace uncertainty from
the optimization error,
\[
    d_{\rm match}(\widehat X_N,X)
    \leq
    \frac{2\eta}{\sqrt{3}\,c^-_\star\kappa}
    +2\rho r_\star^N,
\]
with the quantities and hypotheses of
\cref{thm:coarse-grid-music-correctness}. On uniformly admissible classes
containing a one-point displacement path,
\cref{thm:localization-modulus} also gives the matching lower bound
\[
    \mathfrak R_{\mathfrak X}(\eta)
    \asymp \frac{\eta}{\kappa}
    \qquad (\eta\downarrow0).
\]
Thus the dependence on subspace error and wavenumber is optimal in the
deterministic oracle model.

The scope of this conclusion is tied to the observation model. The
localization stage is incidence blind because the incident fields and
the Foldy--Lax interactions enter through the effective coefficient
matrix, while the scatterer locations determine the spherical Fourier
subspace. Multiple scattering is retained in this factorization.
Recovering the full signal subspace nevertheless requires sufficiently
diverse effective illuminations, and its stable estimation depends on
their conditioning and on the measurement process. The present
localization theorem therefore supplies the geometric part of a
measurement-to-location analysis. A natural next step is to derive
quantitative subspace-error certificates from finite, noisy scattering
data, accounting for receiver sampling, aperture, calibration, and
illumination diversity. In particular, assessing the benefit of larger
\(\kappa\) at a fixed measurement budget requires controlling how
\(\varepsilon_{\rm sub}\) itself depends on frequency and acquisition.

An unresolved analytical question is the optimal separation scale for
uniform spherical frame bounds. The sufficient condition
\(\kappa\delta_X\gtrsim s^{2/3}\) follows from an absolute coherence row
sum, and \cref{app:row-sum-example} shows that its exponent cannot be
improved within that argument. The lower- and upper-frame obstructions
in \cref{app:lower-example,app:upper-example} occur at different scales:
they rule out uniform lower bounds below the \(s^{1/6}\) scale and uniform
upper bounds below the \(s^{1/3}\) scale. The optimal exponent for
two-sided arbitrary-cloud conditioning consequently remains unresolved
between \(1/3\) and \(2/3\). Improving the sufficient scale would require
spectral estimates that retain information discarded by absolute row
sums. Transferring such an improvement to the full localization theorem
would also require compatible control of the derivative correlations
and the objective away from the targets.

The principal computational limitation is the cost of certified global
initialization. For fixed dimensionless certificate margins, a uniform
mesh of spacing \(O(\kappa^{-1})\) in a fixed three-dimensional search
region requires \(O(\kappa^3)\) points. This cost can be prohibitive in
the separation regime considered here, even though refinement within
each well is contractive. Adaptive subdivision is therefore a natural
direction: bounds on the MUSIC objective over a cell could exclude
regions that cannot meet the acceptance threshold and concentrate
refinement near candidate wells. A rigorous replacement for the uniform
grid would need to preserve coverage of every certified well and provide
an explicit complexity bound. Establishing such a guarantee remains
open.

The numerical experiment also motivates a distinction between
adversarial perturbations and structured perturbation models. The
observed nearly quadratic dependence on \(\mu_X\), at fixed subspace
error, is consistent with the heuristic
\(O(\varepsilon_{\rm sub}/\kappa^2)\) displacement for the separated
auxiliary-atom construction. This behavior is compatible with the
worst-case \(O(\varepsilon_{\rm sub}/\kappa)\) modulus because the
experiment samples a restricted family of perturbation directions.
A rigorous explanation would require quantitative bounds on the
derivative correlations after projection and orthonormalization, together
with control of the local expansion and finite-iteration error. Such an
analysis could clarify which acquisition-induced perturbations attain
the adversarial rate and which permit better accuracy.

A natural extension concerns measurements with the angular variable restricted to a nonempty open subset of $\bmS^2$ such as a hemisphere.  Under suitable conditioning, curvature, and value-gap estimates, the subspace perturbation argument would yield analogous guarantees, with constants reflecting the aperture geometry. Establishing explicit estimates of this kind, and quantifying their deterioration as the aperture narrows, are natural directions for future work. 

Finally, extension to extended scatterers or continuously distributed
contrasts requires an analogue of the finite-cloud separation and
conditioning hypotheses. A continuum limit cannot be obtained by simply
increasing the number of points at fixed frequency, since the present
uniform certificates need not persist as the cloud becomes dense.
Identifying an appropriate geometric scale or effective complexity, and
determining whether it supports a comparable subspace-to-geometry
stability principle, remain open problems.

  	\appendix
	\section{Positivity of the Gram matrix}
	\label{app:gram}
\begin{lemma}
\label{lem:gram-positive}
Let $\kappa>0$, and let
\[
    X=\{\bm x_1,\ldots,\bm x_s\}\subset\mathbb R^3
\]
be a finite set of pairwise distinct points.  Define
\[
    G_X
    :=
    \left[
        K_\kappa(\bm x_j-\bm x_k)
    \right]_{j,k=1}^s,
    \qquad
    K_\kappa(\bm h)
    =
    \frac{1}{4\pi}
    \int_{\mathbb S^2}
        e^{i\kappa\bm\omega\cdot\bm h}
    \,d\bm\omega.
\]
Then $G_X$ is Hermitian strictly positive definite and hence
invertible.
\end{lemma}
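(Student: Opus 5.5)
The plan is to write the quadratic form of $G_X$ as a squared $L^2$ norm and then reduce strict positivity to linear independence of the exponentials $e^{\ii\kappa\bom\cdot\bx_j}$ restricted to the sphere.

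\emph{Hermitian symmetry and semidefiniteness.} Since $K_\kappa(-\bh)=\overline{K_\kappa(\bh)}$, the matrix $G_X$ is Hermitian; the kernel is in fact real, by the closed form $\sin(\kappa|\bh|)/(\kappa|\bh|)$. For $\bm\alpha\in\C^s$, expanding the square exactly as in the proof of \cref{thm:sparse-spherical-frame} gives
\[
\bm\alpha^*G_X\bm\alpha=\frac1{4\pi}\int_{\Sph}\Bigl|\sum_{j=1}^s \alpha_j e^{\ii\kappa\bom\cdot\bx_j}\Bigr|^2\dd\bom\ \ge 0 .
\]
Hence $G_X\succeq0$. (If one's sign or conjugation convention produces $\overline{\alpha_j}$, rename the coefficients; this does not affect the argument.)

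\emph{Reduction to linear independence.} Suppose $\bm\alpha^*G_X\bm\alpha=0$. The function $f(\bom)=\sum_j\alpha_j e^{\ii\kappa\bom\cdot\bx_j}$ is continuous on $\Sph$ and has zero $L^2$ norm, so $f\equiv0$ on $\Sph$. It therefore suffices to show that the restrictions of these exponentials to $\Sph$ are linearly independent whenever the $\bx_j$ are distinct.

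\emph{Linear independence on the sphere.} This is the main obstacle, because the exponentials are defined on all of $\R^3$ and we only know that $f$ vanishes on a sphere. I would pass to one dimension by choosing a direction.
\begin{itemize}
\item Pick a unit vector $\bm u$ with all projections $\bm u\cdot\bx_j$ distinct. This is possible because the excluded set is a finite union of great circles $\{\bm u:\bm u\cdot(\bx_j-\bx_k)=0\}$ with $j\ne k$, and such a union has empty interior in $\Sph$.
\item The set of such $\bm u$ is open, so choose an open arc of great circle through $\bm u$ lying entirely in the good set. Parametrize it as $\bom(\theta)=\cos\theta\,\bm e_1+\sin\theta\,\bm e_2$ for $\theta$ in an interval.
\item Along the arc, $f(\bom(\theta))=\sum_j\alpha_j e^{\ii\kappa r_j\cos(\theta-\phi_j)}$, where $\bx_j$ projected to the plane has polar coordinates $(r_j,\phi_j)$. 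This is real-analytic in $\theta$ and vanishes on an interval, so it vanishes identically for all $\theta\in\R$.
\end{itemize}
Separating the terms directly along the circle is not straightforward, so I would instead use that $f$ extends to the entire function $F(\bm\zeta)=\sum_j\alpha_j e^{\ii\kappa\bm\zeta\cdot\bx_j}$ on $\C^3$.

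\emph{Concluding via the complexified sphere.} The complexified sphere $\{\bm\zeta\in\C^3:\bm\zeta\cdot\bm\zeta=1\}$ is an irreducible complex-analytic variety, and $\Sph$ is a totally real submanifold of it of maximal dimension. Therefore $F$ vanishes on the whole complex sphere. That variety contains points $\bm\zeta=\bm a+\ii t\bm b$ with $\bm a=\sqrt{1+t^2}\,\bm u$ and $\bm b\perp\bm u$ a unit vector, and we may take $t\to\infty$. Choosing $\bm b$ so that the values $\bm b\cdot\bx_j$ are distinct as well (a generic choice), we get
\[
|e^{\ii\kappa\bm\zeta\cdot\bx_j}|=e^{-\kappa t\,\bm b\cdot\bx_j}.
\]
The term with the smallest value of $\bm b\cdot\bx_j$ dominates all the others exponentially as $t\to\infty$. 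Since $F=0$ along this family, that term's coefficient $\alpha_j$ must be zero. Inducting on the number of remaining terms gives $\bm\alpha=0$.

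Hence $G_X\succ0$, and $G_X$ is invertible.
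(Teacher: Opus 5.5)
Your proof follows the paper's route. You write $\bm\alpha^*G_X\bm\alpha$ as an $L^2(\Sph)$ norm, extend the exponential sum to an entire function on $\C^3$, move along an imaginary direction, and let the term with extreme projection dominate. However, one step is wrong as written.

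The distinctness you actually use at the end is that of $\bm b\cdot\bx_j$, not of $\bm u\cdot\bx_j$. For your choice of $\bm u$, a suitable $\bm b\perp\bm u$ need not exist. Suppose $\bm u$ is parallel to some difference $\bx_j-\bx_k$, which your condition on $\bm u$ allows (e.g.\ $X=\{\bm 0,\bm e_3\}$, $\bm u=\bm e_3$). Then every $\bm b\perp\bm u$ gives $\bm b\cdot\bx_j=\bm b\cdot\bx_k$. The smallest exponent is then attained by more than one term, and the domination step no longer isolates a single coefficient. The fix is what the paper does: choose $\bm b$ first with pairwise distinct $\bm b\cdot\bx_j$ (your great-circle argument shows this is possible), then take any unit $\bm u\perp\bm b$. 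No condition on $\bm u$ is needed.

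Separately, the analytic-continuation step can be made much lighter. The paper sets $H(\zeta)=F(\bm a\cos\zeta+\bm b\sin\zeta)$ with $\bm a\perp\bm b$ unit vectors. $H$ is entire in one variable and vanishes on $\R$, so it vanishes identically by the one-variable identity theorem. Your test points lie on exactly this curve, since $\sqrt{1+t^2}\,\bm u+\ii t\,\bm b=\bm u\cos(\ii\tau)+\bm b\sin(\ii\tau)$ with $t=\sinh\tau$. So the appeal to irreducibility of the complex quadric and to maximal totally real submanifolds is unnecessary. That appeal is correct, but it rests on connectedness of the quadric and a several-variable identity theorem you do not prove. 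The bullet about an arc inside the good set is also unnecessary, since $f$ already vanishes on all of $\Sph$.
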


\begin{proof}
For $\bm\alpha=(\alpha_1,\ldots,\alpha_s)^\top\in\mathbb C^s$,
the spherical representation of $K_\kappa$ gives
\[
    \bm\alpha^*G_X\bm\alpha
    =
    \frac{1}{4\pi}
    \int_{\mathbb S^2}
    \left|
        \sum_{j=1}^s
        \alpha_j e^{-i\kappa\bm\omega\cdot\bm x_j}
    \right|^2
    \,d\bm\omega.
\]
In particular, $G_X$ is positive semidefinite.

Suppose that $\bm\alpha^*G_X\bm\alpha=0$, and define
\[
    F_{\bm\alpha}(\bm z)
    :=
    \sum_{j=1}^s
        \alpha_j e^{-i\kappa\bm z\cdot\bm x_j},
    \qquad
    \bm z\in\mathbb C^3.
\]
This finite exponential sum is entire on $\mathbb C^3$.  Since its
restriction to $\mathbb S^2$ is continuous, the preceding integral
identity implies
\[
    F_{\bm\alpha}(\bm\omega)=0
    \qquad
    \text{for every }\bm\omega\in\mathbb S^2.
\]

Choose $\bm b\in\mathbb S^2$ such that the numbers
\[
    \beta_j:=\bm b\cdot\bm x_j,
    \qquad j=1,\ldots,s,
\]
are pairwise distinct.  Such a direction exists because the
exceptional set is contained in the finite union of great circles
\[
    \bigcup_{j\ne k}
    \left\{
        \bm b\in\mathbb S^2:
        \bm b\cdot(\bm x_j-\bm x_k)=0
    \right\}.
\]
After relabeling, suppose that
\[
    \beta_1>\beta_2>\cdots>\beta_s.
\]
Choose $\bm a\in\mathbb S^2$ with $\bm a\perp\bm b$, and set
\[
    H(\zeta)
    :=
    F_{\bm\alpha}
    \bigl(
        \bm a\cos\zeta+\bm b\sin\zeta
    \bigr),
    \qquad
    \zeta\in\mathbb C.
\]
The function $H$ is entire.  For every $\theta\in\mathbb R$,
\[
    \bm a\cos\theta+\bm b\sin\theta\in\mathbb S^2,
\]
and therefore $H(\theta)=0$.  The identity theorem implies that
$H\equiv0$ on $\mathbb C$.

Evaluating at $\zeta=it$, where $t>0$, yields
\[
    0
    =
    \sum_{j=1}^s
    \alpha_j
    \exp\left(
        -i\kappa(\bm a\cdot\bm x_j)\cosh t
        +
        \kappa\beta_j\sinh t
    \right).
\]
After division by
\[
    \exp\left(
        -i\kappa(\bm a\cdot\bm x_1)\cosh t
        +
        \kappa\beta_1\sinh t
    \right),
\]
we obtain
\[
    0
    =
    \alpha_1
    +
    \sum_{j=2}^s
    \alpha_j
    e^{-\kappa(\beta_1-\beta_j)\sinh t}
    e^{-i\kappa
       \bm a\cdot(\bm x_j-\bm x_1)\cosh t}.
\]
The last exponential factor has modulus one, whereas
\[
    e^{-\kappa(\beta_1-\beta_j)\sinh t}\longrightarrow0
    \qquad
    (t\to\infty)
\]
for every $j\ge2$.  Hence $\alpha_1=0$.  Repeating the same argument
for the remaining indices gives
\[
    \alpha_1=\cdots=\alpha_s=0.
\]
Thus the quadratic form vanishes only at the origin, and therefore
$G_X\succ0$.
\end{proof}

\section{ A regular-grid lower-frame bound obstruction}
\label{app:lower-example}

The following  lower-frame obstruction shows 
that every polynomial threshold \(Cs^p\) with \(p<1/6\) is
insufficient, even for regular three-dimensional grids. 

\begin{lemma}[A spherical-Bessel bound]
\label{lem:spherical-bessel-elementary-bound}
For every integer \(\ell\ge0\) and every real \(z\ge0\),
\[
    |j_\ell(z)|
    \le
    \frac{z^\ell}{(2\ell+1)!!}.
\]
If \(\ell\ge1\), then
\[
    |j_\ell(z)|
    \le
    \frac{z^\ell}{(2\ell+1)!!}
    \le
    \left(\frac{ez}{2\ell}\right)^\ell.
\]
\end{lemma}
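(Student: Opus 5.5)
I would start from the power-series representation of the spherical Bessel function,
\[
    j_\ell(z)=z^\ell\sum_{k=0}^\infty\frac{(-1)^k z^{2k}}{2^k k!\,(2\ell+2k+1)!!}.
\]
However, that series alternates without monotone terms for large $z$, so bounding it termwise is awkward. Instead I would use the Poisson integral representation
\[
    j_\ell(z)=\frac{z^\ell}{2^{\ell+1}\ell!}\int_{-1}^{1}\cos(zt)\,(1-t^2)^\ell\,dt,
\]
which follows from the spherical-harmonic expansion of the kernel $K_\kappa$ (equivalently from the Funk--Hecke formula applied to $e^{\ii z\bom\cdot\bm e}$ against $P_\ell$, followed by Rodrigues' formula and $\ell$ integrations by parts). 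Since $|\cos(zt)|\le1$, we get
\[
    |j_\ell(z)|\le \frac{z^\ell}{2^{\ell+1}\ell!}\int_{-1}^1(1-t^2)^\ell\,dt .
\]
The remaining integral is the Beta value
\[
    \int_{-1}^1(1-t^2)^\ell\,dt=\frac{2^{2\ell+1}(\ell!)^2}{(2\ell+1)!}.
\]
This gives
\[
    |j_\ell(z)|\le \frac{z^\ell\,2^\ell\,\ell!}{(2\ell+1)!}=\frac{z^\ell}{(2\ell+1)!!},
\]
using $(2\ell+1)!=2^\ell\ell!\,(2\ell+1)!!$. The case $\ell=0$ is simply $|\sin z/z|\le1$, which is consistent with this bound.

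For the second inequality with $\ell\ge1$, I would write
\[
    (2\ell+1)!!=\prod_{m=0}^{\ell}(2m+1)\ge \prod_{m=1}^{\ell}(2m)=2^\ell\ell!
\]
and then apply the elementary Stirling-type bound $\ell!\ge(\ell/e)^\ell$, which follows from $e^\ell=\sum_k \ell^k/k!\ge \ell^\ell/\ell!$. Together these give
\[
    (2\ell+1)!!\ge 2^\ell(\ell/e)^\ell=(2\ell/e)^\ell,
\]
hence $z^\ell/(2\ell+1)!!\le (ez/(2\ell))^\ell$.

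The only step needing care is justifying the Poisson integral with the correct normalization. I would derive it by noting that $j_\ell(z)=\frac{1}{2}\,\ii^{-\ell}\int_{-1}^1 e^{\ii zt}P_\ell(t)\,dt$ (the Funk--Hecke identity already implicit in $K_\kappa$). Substituting Rodrigues' formula $P_\ell=\frac{1}{2^\ell\ell!}\frac{d^\ell}{dt^\ell}(t^2-1)^\ell$ and integrating by parts $\ell$ times, with boundary terms vanishing at $t=\pm1$, yields
\[
    \frac{z^\ell}{2^{\ell+1}\ell!}\int_{-1}^1 e^{\ii zt}(1-t^2)^\ell\,dt .
\]
Taking the real part (the integrand's imaginary part is odd in $t$) gives the cosine form.
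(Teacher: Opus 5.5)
Your proof is correct and follows the paper's argument essentially step for step. You use the Poisson integral $j_\ell(z)=\frac{z^\ell}{2^{\ell+1}\ell!}\int_{-1}^1 e^{izt}(1-t^2)^\ell\,dt$ with the trivial bound on the oscillatory factor, then the Beta integral (your factorial form $\frac{2^{2\ell+1}(\ell!)^2}{(2\ell+1)!}$ equals the paper's $\frac{2^{\ell+1}\ell!}{(2\ell+1)!!}$), and finally $(2\ell+1)!!\ge 2^\ell\ell!$ with $\ell!\ge(\ell/e)^\ell$. The only difference is that you derive the Poisson representation yourself from the Funk--Hecke identity and Rodrigues' formula by $\ell$ integrations by parts, with the normalization checked correctly, whereas the paper quotes it via $J_{\ell+1/2}$.
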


\begin{proof}
Poisson's integral representation for \(J_{\ell+1/2}\), together
with
\[
    j_\ell(z)
    =
    \sqrt{\frac{\pi}{2z}}J_{\ell+1/2}(z),
\]
gives
\[
    j_\ell(z)
    =
    \frac{z^\ell}{2^{\ell+1}\ell!}
    \int_{-1}^{1}
    e^{izt}(1-t^2)^\ell\,dt.
\]
Since \(z\) is real,
\[
    |e^{izt}|=1,
\]
and therefore
\[
    |j_\ell(z)|
    \le
    \frac{z^\ell}{2^{\ell+1}\ell!}
    \int_{-1}^{1}(1-t^2)^\ell\,dt.
\]
The beta integral satisfies
\[
    \int_{-1}^{1}(1-t^2)^\ell\,dt
    =
    B\!\left(\frac12,\ell+1\right)=
    \frac{\Gamma(\frac12)\Gamma(\ell+1)}
         {\Gamma(\ell+\frac32)}=
    \frac{2^{\ell+1}\ell!}{(2\ell+1)!!}.
\]
Consequently,
\[
    |j_\ell(z)|
    \le
    \frac{z^\ell}{(2\ell+1)!!}.
\]

For \(\ell\ge1\),
\[
    (2\ell+1)!!
    =
    \prod_{k=1}^{\ell}(2k+1)
    \ge
    \prod_{k=1}^{\ell}2k
    =
    2^\ell\ell!.
\]
Using
\[
    \ell!\ge\left(\frac{\ell}{e}\right)^\ell,
\]
we obtain
\[
    \frac{z^\ell}{(2\ell+1)!!}
    \le
    \frac{z^\ell}{2^\ell\ell!}\le
    \frac{z^\ell}
         {2^\ell(\ell/e)^\ell}=
    \left(\frac{ez}{2\ell}\right)^\ell.
\]
\end{proof}

\begin{proposition}
\label{prop:positive-power-lower-frame-obstruction}
Let \(C>0\) and \(0<p<1/6\). There exist point clouds
\[
    X_n\subset\overline{B_{1/2}(0)},
    \qquad
    s_n:=|X_n|\longrightarrow\infty,
\]
and wavenumbers \(\kappa_n\) such that
\[
    \kappa_n\delta_{X_n}=Cs_n^p,
\]
but
\[
    \lambda_{\min}(G_{X_n})\longrightarrow0.
\]
\end{proposition}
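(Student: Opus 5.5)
The plan is to construct the counterexample on a regular cubic grid. The idea is to show that, at the wavenumbers forced by $\kappa_n\delta_{X_n}=Cs_n^p$ with $p<1/6$, the atoms $\varphi_{\bx_j}$ are essentially confined to a space of spherical harmonics of dimension much smaller than $s_n$. A linear-algebra count then produces a unit coefficient vector with tiny synthesis norm.

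\textbf{Construction.} Take $X_n$ to be an $n\times n\times n$ cubic grid of pitch $\delta_n\asymp n^{-1}$, scaled to lie in $\overline{B_{1/2}(0)}$. Then $s_n=n^3$, $\delta_{X_n}=\delta_n\asymp s_n^{-1/3}$, and we set $\kappa_n:=Cs_n^p/\delta_n\asymp s_n^{p+1/3}$. Since $p<1/6$, we have $\kappa_n=o(s_n^{1/2})$.

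\textbf{Spherical-harmonic decomposition.} By the plane-wave (Funk--Hecke) expansion,
\[
\varphi_{\bx}(\bom)=\sum_{\ell\ge0}(2\ell+1)\,\ii^\ell j_\ell(\kappa|\bx|)\,P_\ell(\bom\cdot\hat\bx),
\]
and the components $\pi_\ell\varphi_\bx$ are mutually orthogonal in $\cH$. Using the addition theorem and the normalized measure, $\|\pi_\ell\varphi_\bx\|_2=\sqrt{2\ell+1}\,|j_\ell(\kappa|\bx|)|$.

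Fix a cutoff $L=L_n:=\lceil e\kappa_n\rceil$. The map $\bm\alpha\mapsto\sum_{\ell\le L}\pi_\ell\Phi_X\bm\alpha$ has range of dimension at most $(L+1)^2=O(\kappa_n^2)=o(s_n)$. Hence for large $n$ its kernel is nontrivial, and we may pick $\bm\alpha$ in it with $\|\bm\alpha\|_2=1$.

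\textbf{Tail estimate.} For this $\bm\alpha$,
\[
\bm\alpha^*G_{X_n}\bm\alpha=\sum_{\ell>L}\Big\|\sum_j\alpha_j\pi_\ell\varphi_{\bx_j}\Big\|_2^2
\le\sum_{\ell>L}\Big(\sum_j|\alpha_j|\sqrt{2\ell+1}\,|j_\ell(\kappa_n|\bx_j|)|\Big)^2 .
\]
Apply \cref{lem:spherical-bessel-elementary-bound} with $|\bx_j|\le1/2$; the bound is monotone in $z$, so $|j_\ell(\kappa_n|\bx_j|)|\le (e\kappa_n/(4\ell))^\ell\le 4^{-\ell}$ for $\ell>L$. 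Combining this with Cauchy--Schwarz, $\|\bm\alpha\|_1^2\le s_n$, gives
\[
\lambda_{\min}(G_{X_n})\le s_n\sum_{\ell>L}(2\ell+1)16^{-\ell}\lesssim s_n L\,16^{-L}.
\]
Since $L\gtrsim\kappa_n\asymp s_n^{p+1/3}$ grows polynomially in $s_n$, the right side tends to $0$.

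\textbf{Main obstacle.} The delicate points are the exact normalization of $\|\pi_\ell\varphi_\bx\|$ in $\cH$ and the dimension count $(L+1)^2$ versus $s_n$. Both only need to hold up to constants. Any fixed multiple of $\kappa_n$ as the cutoff works, as long as $\kappa_n^2=o(s_n)$, and this is exactly where the threshold $p<1/6$ enters.
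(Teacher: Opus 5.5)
Your proof is correct and follows essentially the same route as the paper: the same $n\times n\times n$ cubic grid and choice of $\kappa_n$, a dimension count against the spherical harmonics of degree at most the cutoff, \cref{lem:spherical-bessel-elementary-bound} for the per-atom tail, and the bound $\|\bm\alpha\|_1^2\le s_n$. The only difference is where the cutoff sits. The paper places it at the dimension budget $M_n=\lfloor \tfrac12 n^{3/2}\rfloor$ and uses $\vartheta_n=e\kappa_n/(4M_n)\to0$. You place it at $L\approx e\kappa_n$ and use $\kappa_n^2=o(s_n)$ for the count. Both encode the same condition $\kappa_n=o(s_n^{1/2})$, i.e.\ $p<1/6$.
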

\begin{remark} The proof can be slightly modified to show the following.

For every prescribed positive sequence \(a_n\to0\), one can choose the regular cubic grids \(X_n\) and wavenumbers \(\kappa_n\) so that
\[
\kappa_n\delta_{X_n}=a_ns_n^{1/6}
\]while
\[
\lambda_{\min}(G_{X_n})\to0.
\]

\end{remark}
\begin{proof}
For \(n\ge2\), set
\[
    s_n:=n^3,
    \qquad
    \delta_n:=\frac{1}{\sqrt3\,(n-1)},
\]
and define
\[
    X_n
    :=
    \left\{
    \delta_n
    \begin{pmatrix}
        a-\frac{n-1}{2}\\
        b-\frac{n-1}{2}\\
        c-\frac{n-1}{2}
    \end{pmatrix}
    :
    0\le a,b,c<n
    \right\}.
\]
Then
\[
    |X_n|=s_n,
    \qquad
    \delta_{X_n}=\delta_n.
\]
Moreover, for every \(\bm x\in X_n\),
\[
    |\bm x|
    \le
    \delta_n\frac{\sqrt3(n-1)}2
    =
    \frac12,
\]
and hence \(X_n\subset B_{1/2}(0)\).

Choose
\[
    \kappa_n
    :=
    \frac{Cn^{3p}}{\delta_n}.
\]
Then
\[
    \kappa_n\delta_{X_n}
    =
    Cn^{3p}
    =
    Cs_n^p.
\]

Let
\[
    M_n:=\left\lfloor\frac12n^{3/2}\right\rfloor,
    \qquad
    L_n:=M_n-1.
\]
The space of spherical harmonics of degrees \(0,\ldots,L_n\) has
dimension
\[
    (L_n+1)^2=M_n^2\le s_n/4.
\]
Therefore there exists
\(\bm\alpha^{(n)}\in\mathbb C^{s_n}\), with
\(\|\bm\alpha^{(n)}\|_2=1\), such that
\[
    P_{\le L_n}
  \left[  \sum_{\bm x\in X_n}
    \alpha_{\bm x}^{(n)}
    e^{i\kappa_n\bm\bz\cdot\bm x}\right]
    =0.
\]
Write
\[
    F_n(\bm\bz)
    :=
    \sum_{\bm x\in X_n}
    \alpha_{\bm x}^{(n)}
    e^{i\kappa_n\bm\bz\cdot\bm x}.
\]

For a single plane wave, the spherical plane-wave expansion and the
addition theorem give
\[
    \left\|
        (I-P_{\le L})
        e^{i\kappa\bm\bz\cdot\bm x}
    \right\|_2^2
    =
    \sum_{\ell=L+1}^{\infty}
    (2\ell+1)j_\ell(\kappa|\bm x|)^2.
\]

Since \(|\bm x|\le1/2\), put
\[
    \vartheta_n:=\frac{e\kappa_n}{4M_n}.
\]
Because
\beq\label{eq:c.2}
    \kappa_n\asymp n^{1+3p},
    \qquad
    M_n\asymp n^{3/2},
\eeq
we have
\beq
\label{eq:c.1}
    \vartheta_n
    =
    O\left(n^{-(1/2-3p)}\right)
    \longrightarrow0.
\eeq
By \cref{lem:spherical-bessel-elementary-bound}, 
\[
    |j_\ell(\kappa_n|\bm x|)|
    \le
    \vartheta_n^\ell,\quad  \forall \ell\ge M_n,\,\, \bm x\in X_n,
\]
\[
    \left\|
        (I-P_{\le L_n})
        e^{i\kappa_n\bm\bz\cdot\bm x}
    \right\|_2^2
    \le
    (4M_n+6)\vartheta_n^{2M_n}\quad\hbox{for}\quad \vartheta_n\le1/\sqrt 2. 
\]

Since \(P_{\le L_n}F_n=0\), the triangle inequality and
\(\|\bm\alpha^{(n)}\|_1\le\sqrt{s_n}\) give
\[
\begin{aligned}
    (\bm\alpha^{(n)})^*
    G_{X_n}\bm\alpha^{(n)}
    &=
    \|F_n\|_2^2\le
    s_n(4M_n+6)\vartheta_n^{2M_n}\longrightarrow 0
\end{aligned}
\]
 due to \eqref{eq:c.2} and \eqref{eq:c.1}. 
Hence
\[
    \lambda_{\min}(G_{X_n})
    \le
    (\bm\alpha^{(n)})^*
    G_{X_n}\bm\alpha^{(n)}
    \longrightarrow0.
\]
\end{proof}

\section{A thin-box upper-frame bound obstruction}
\label{app:upper-example}
The following example gives a bound on the Rayleigh quotient
of \(G_X\) and proves a necessary worst-case scale \(s^{1/3}\) for a
uniform upper frame bound. It is a highly anisotropic rectangular-grid construction, not a saturated cubic-grid example,
\begin{proposition}
\label{prop:thin-box-upper-frame-obstruction}
There exist absolute constants \(c_0,\eps_0>0\) with the
following property. For every \(0<\eps\le\eps_0\) and
every sufficiently large integer \(n\), there is a point cloud
\(X_n\subset B_1(0)\) with
\[
    s_n:=|X_n|=n^6,
    \qquad
    \delta_{X_n}=n^{-4},
\]
and a wavenumber
\[
    \kappa_n=\eps n^6
\]
such that
\[
    \kappa_n\delta_{X_n}
    =
    \eps n^2
    =
    \eps s_n^{1/3},
\]
whereas the associated spherical Gram matrix satisfies
\[
    \lambda_{\max}(G_{X_n})
    \ge
    \frac{c_0}{\eps}.
\]
Consequently, an arbitrary-cloud spherical frame upper bound cannot hold under a separation threshold
\(o(s^{1/3})\).
\end{proposition}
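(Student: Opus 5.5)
The plan is to bound the Rayleigh quotient of $G_{X_n}$ from below with an explicit test vector. The point cloud will be a long, thin rectangular grid, and the test vector will be chosen so that the associated far-field pattern is coherent on a spherical cap around one axis direction. Recall that for any $\bm\alpha$ with $\|\bm\alpha\|_2=1$,
\[
\lambda_{\max}(G_{X_n})\ge \bm\alpha^*G_{X_n}\bm\alpha=\frac1{4\pi}\int_{\Sph}\Bigl|\sum_j\alpha_je^{\ii\kappa_n\bm\omega\cdot\bm x_j}\Bigr|^2\dd\bm\omega .
\]
So it suffices to exhibit a set of directions of normalized measure $\gtrsim 1/(\kappa_nL_1)$ on which the sum has modulus $\gtrsim s_n$.

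\textbf{Construction.} Take $N_1=n^4$ and $N_2=N_3=n$, so that $s_n=N_1N_2N_3=n^6$. With spacing $\delta=n^{-4}$, let $X_n$ be the rectangular grid $\delta\,(a,b,c)$, with $0\le a<N_1$ and $0\le b,c<N_2$, translated to be centered at the origin.
\begin{itemize}
\item The minimum separation is $\delta_{X_n}=n^{-4}$.
\item The box has side lengths $L_1\approx1$ and $L_2=L_3\approx n^{-3}$.
\item After the harmless rescaling $N_1=\lfloor n^4/2\rfloor$ if needed, the box lies in $B_1(0)$; this only changes $s_n$ by an absolute factor.
\end{itemize}
With $\kappa_n=\eps n^6$ we get $\kappa_n\delta_{X_n}=\eps n^2=\eps s_n^{1/3}$.

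\textbf{Test vector.} Choose the modulated vector $\alpha_j=s_n^{-1/2}e^{-\ii\kappa_n\bm e_1\cdot\bm x_j}$. Then
\[
F(\bm\omega)=s_n^{-1/2}\sum_j e^{\ii\kappa_n(\bm\omega-\bm e_1)\cdot\bm x_j},
\]
which is exactly coherent at $\bm\omega=\bm e_1$. This modulation avoids any integrality condition on $\kappa_n\delta$. Because the grid is centered, every point satisfies $|x_{j,1}|\le L_1/2$ and $|x_{j,2}|,|x_{j,3}|\le L_2/2$. Hence the phase satisfies $|\kappa_n(\bm\omega-\bm e_1)\cdot\bm x_j|\le\pi/3$ for all $j$ as soon as
\[
1-\omega_1\le\frac{c}{\kappa_nL_1},\qquad |\omega_2|,|\omega_3|\le\frac{c}{\kappa_nL_2},
\]
for a small absolute constant $c$. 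On that set $\operatorname{Re}$ of each summand is at least $1/2$, so $|F(\bm\omega)|\ge \tfrac12 s_n^{1/2}$.

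\textbf{Measure of the coherent set (the main step).} This is where the anisotropy is essential. By Archimedes' theorem, the cap $\{1-\omega_1\le c/(\kappa_nL_1)\}$ has normalized measure exactly $c/(2\kappa_nL_1)$. On this cap, $|\omega_2|,|\omega_3|\le\sqrt{2c/(\kappa_nL_1)}\asymp \eps^{-1/2}n^{-3}$. The transverse constraint only requires $|\omega_2|,|\omega_3|\le c/(\kappa_nL_2)\asymp\eps^{-1}n^{-3}$. For $\eps\le\eps_0$ small (and $n$ large), the cap is therefore automatically contained in the transverse slab, so the whole cap is coherent. Consequently
\[
\lambda_{\max}(G_{X_n})\ge \frac{s_n}{4}\cdot\frac{c}{2\kappa_nL_1}\gtrsim\frac{n^6}{\eps n^6}=\frac{c_0}{\eps}.
\]

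The only delicate point is to keep the constants consistent: $c$ must make the phase bound $\le\pi/3$, and $\eps_0$ must make the cap containment hold. Both reduce to elementary inequalities. The consequence stated at the end of the proposition follows immediately: along this family $\kappa_n\delta_{X_n}/s_n^{1/3}=\eps$ can be made arbitrarily small while $\lambda_{\max}$ blows up like $1/\eps$.
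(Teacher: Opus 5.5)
Your proposal is correct and is essentially the paper's proof. It uses the same centered $n^4\times n\times n$ grid of spacing $n^{-4}$, the same modulated unit vector $\alpha_{\bm x}=s_n^{-1/2}e^{-\mathrm{i}\kappa x_1}$, and the same coherent cap about $\bm e_1$ of normalized measure $\asymp 1/\kappa$. Your Archimedes-plus-slab-containment computation, which needs $\eps\lesssim c$, simply replaces the paper's parametrization $|\bm u|\le a_0/\sqrt{\kappa}$ with phase bound $a_0^2/2+a_0\sqrt{\eps}/\sqrt2$.

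One small correction: drop the hedge about replacing $N_1$ by $\lfloor n^4/2\rfloor$. It is never needed, because the centered box already has $|x_1|<1/2$ and $|\bm x_\perp|<n^{-3}$. It would also destroy the exact identities $s_n=n^6$ and $\kappa_n\delta_{X_n}=\eps s_n^{1/3}$ that the statement requires.
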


\begin{proof}
Set
\[
    \delta:=n^{-4}
\]
and define
\[
    X_n
    :=
    \left\{
    \delta
    \begin{pmatrix}
        a-\frac{n^4-1}{2}\\[1mm]
        b-\frac{n-1}{2}\\[1mm]
        c-\frac{n-1}{2}
    \end{pmatrix}
    :
    0\le a<n^4,\quad 0\le b,c<n
    \right\}.
\]
Then
\[
    |X_n|=n^6,
    \qquad
    \delta_{X_n}=\delta.
\]
Moreover,
\[
    |x_1|<\frac12,
    \qquad
    |x_2|,|x_3|<\frac{1}{2n^3},
\]
so \(X_n\subset B_1(0)\).

Set
\[
    \kappa:=\eps n^6
\]
and define a unit coefficient vector by
\[
    \alpha_{\bm x}
    :=
    \frac1{\sqrt{s_n}}e^{-i\kappa x_1},
    \qquad
    \bm x\in X_n.
\]
Then
\[
    \|\bm\alpha\|_2=1.
\]

Parametrize a spherical cap about \(\bm e_1\) by
\[
    \bm\bz(\bm u)
    :=
    \left(\sqrt{1-|\bm u|^2},u_1,u_2\right),
    \qquad
    |\bm u|\le\frac{a_0}{\sqrt{\kappa}},\quad \bu=(u_1,u_2)
\]
where \(a_0>0\) is an absolute constant to be chosen. After the
modulation in \(\alpha_{\bm x}\), the phase of the summand
corresponding to \(\bm x\) is
\[
  \kappa(\bz-\bm e_1)\cdot\bx=  \kappa
    \left[
        \bigl(z_1(\bm u)-1\bigr)x_1
        +
        \bm u\cdot\bm x_\perp
    \right],\quad\bx=(x_1,\bx_\perp),\quad z_1=\sqrt{1-|\bu|^2}
\]
Using
\[
    1-\sqrt{1-|\bm u|^2}\le|\bm u|^2,
    \qquad
    |x_1|\le\frac12,
    \qquad
    |\bm x_\perp|\le\frac{1}{\sqrt2\,n^3},
\]
we obtain
\[
\begin{aligned}
    \left|
    \kappa
    \left[
        \bigl(z_1(\bm u)-1\bigr)x_1
        +
        \bm u\cdot\bm x_\perp
    \right]
    \right|
    &\le
    \frac{\kappa|\bm u|^2}{2}
    +
    \frac{\kappa|\bm u|}{\sqrt2\,n^3}\le
    \frac{a_0^2}{2}
    +
    \frac{a_0\sqrt{\eps}}{\sqrt2}.
\end{aligned}
\]
Choose \(a_0>0\) and then \(\eps_0>0\) sufficiently small so
that the last expression is bounded by a fixed
\(\bx_0<\pi/2\). All summands then lie in the same fixed arc, and
hence, throughout the cap,
\[
    \left|
    \sum_{\bm x\in X_n}
    \alpha_{\bm x}
    e^{i\kappa\bm\bz\cdot\bm x}
    \right|
    \ge
    \cos(\bx_0)\sqrt{s_n}.
\]

Under the above parametrization,
\[
    d\bm\bz
    =
    \frac{d\bm u}{\sqrt{1-|\bm u|^2}}
    \ge d\bm u.
\]
Therefore the normalized surface measure of the cap is at least
\[
    \frac1{4\pi}
    \pi\frac{a_0^2}{\kappa}
    =
    \frac{a_0^2}{4\kappa}.
\]
It follows that
\[
\begin{aligned}
    \bm\alpha^*G_{X_n}\bm\alpha
    &=
    \frac1{4\pi}
    \int_{\mathbb S^2}
    \left|
    \sum_{\bm x\in X_n}
    \alpha_{\bm x}e^{i\kappa\bm\bz\cdot\bm x}
    \right|^2d\bm\bz\ge
    \cos^2(\bx_0)s_n
    \frac{a_0^2}{4\kappa}=
    \frac{a_0^2\cos^2(\bx_0)}{4\eps}.
\end{aligned}
\]
Thus, with
\[
    c_0:=\frac{a_0^2\cos^2(\bx_0)}4,
\]
we have
\[
    \lambda_{\max}(G_{X_n})
    \ge
    \bm\alpha^*G_{X_n}\bm\alpha
    \ge
    \frac{c_0}{\eps}.
\]
Finally,
\[
    \kappa\delta_{X_n}
    =
    \eps n^2
    =
    \eps s_n^{1/3},
\]
which completes the proof.
\end{proof}

\section{Sharpness of the absolute packing row sum}\label{app:row-sum-example}

Proposition~\ref{prop:absolute-row-sum-sharpness} proves that the
three-dimensional exponent \(2/3\) is optimal for the absolute
coherence row sum used in the Gershgorin argument.
\begin{proposition}
\label{prop:absolute-row-sum-sharpness}
There exists an absolute constant \(c>0\) such that, for arbitrarily
large \(s\), one can find a separated point cloud
\(X\subset B_1(0)\) and a wavenumber \(\kappa\) satisfying
\[
    \kappa\delta_X\asymp s^{2/3}
\]
and
\[
    \max_{\bm{x}\in X}
    \sum_{\substack{\bm{y}\in X\\\bm{y}\ne\bm{x}}}
    |K_\kappa(\bm{x}-\bm{y})|
    \ge c.
\]
More generally, the exponent \(2/3\) cannot be replaced by any
\(p<2/3\) in a uniform estimate of the form
\[
    \max_{\bm{x}\in X}
    \sum_{\substack{\bm{y}\in X\\\bm{y}\ne\bm{x}}}
    |K_\kappa(\bm{x}-\bm{y})|
    \lesssim
    \frac{s^p}{\kappa\delta_X}.
\]
\end{proposition}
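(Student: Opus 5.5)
The plan is an explicit construction around a distinguished point: concentric spherical shells whose radii are tuned to the wavenumber. On each shell $|\sin(\kappa r)|=1$, so every absolute kernel value equals exactly $1/(\kappa r)$. The absolute row sum then saturates the ordered-distance bound of \cref{lem:distance-ordering} up to constants, instead of benefiting from cancellation or from zeros of $\sinc$.

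\textbf{Construction.} Fix a large integer $M$ and set $\delta:=1/(4M)$. Choose an integer $N\ge1$ and put $\kappa:=\pi N/\delta$, so that $\kappa\delta=\pi N\in\pi\mathbb Z$. Place $\bx_0:=\bm 0$. For $m=1,\dots,M$, let the shell radius be
\[
r_m:=m\delta+\frac{\pi}{2\kappa}=\Bigl(m+\tfrac{1}{2N}\Bigr)\delta\le 2M\delta=\tfrac12 .
\]
On the sphere $\{|\bz|=r_m\}$, take a maximal $\delta$-separated subset $S_m$. A standard area-packing count gives $c_1 m^2\le|S_m|\le c_2 m^2$ for absolute constants $c_1,c_2$. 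Set $X:=\{\bm 0\}\cup\bigcup_m S_m\subset B_{1/2}(0)\subset B_1(0)$.

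\textbf{Checks.} Consecutive radii differ by exactly $\delta$. Points on different shells are therefore at distance at least $|r_m-r_{m'}|\ge\delta$, and every point is at distance $r_m>\delta$ from the origin, so $\delta_X\ge\delta$. Equality $\delta_X=\delta$ holds by maximality on any shell with $m\ge2$; in any case $\delta_X\asymp\delta$, which suffices. The cardinality is $s=1+\sum_{m\le M}|S_m|\asymp M^3$, so $M\asymp s^{1/3}$.

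\textbf{Row sum at the origin.} Since $\kappa r_m=\pi N m+\pi/2$, we have $|\sin(\kappa r_m)|=1$. Hence
\[
\sum_{\by\ne\bm0}|K_\kappa(\by)|=\sum_{m=1}^M\frac{|S_m|}{\kappa r_m}\ \ge\ \sum_{m=1}^M\frac{c_1m^2}{2\kappa m\delta}\ \gtrsim\ \frac{M^2}{\kappa\delta}\ \asymp\ \frac{s^{2/3}}{\kappa\delta}.
\]
Now choose $N$ to be the integer nearest to $M^2$. Then $\kappa\delta=\pi N\asymp s^{2/3}$, and the row sum at $\bm0$ is bounded below by an absolute constant $c>0$. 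This gives the first claim.

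\textbf{Optimality of the exponent.} Suppose an estimate $\max_{\bx}\sum_{\by\ne\bx}|K_\kappa(\bx-\by)|\le C s^p/(\kappa\delta_X)$ held uniformly for some $p<2/3$. Along the family above, the left side is at least $c'\,s^{2/3}/(\kappa\delta_X)$, and $\kappa\delta_X$ cancels from both sides. This yields $c'\,s^{2/3-p}\le C$, which fails as $s\to\infty$.

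\textbf{Main obstacle.} The only genuinely delicate point is the shell packing count $|S_m|\gtrsim m^2$, together with cross-shell separation. I would prove the count by comparing the area of $\{|\bz|=r_m\}$ with that of the geodesic caps of radius comparable to $\delta$ that cover it by maximality, using $r_m\ge\delta$. Cross-shell separation is immediate from the radial gap. Everything else is bookkeeping of absolute constants.
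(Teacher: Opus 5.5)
Your proof is correct, and it takes a genuinely different route from the paper's. The paper keeps the regular cubic lattice $X_m=\{\delta_m\bm n:\ \bm n\in\mathbb Z^3,\ |n_i|\le m\}$ and never fixes $\kappa$ explicitly. Instead it averages the row sum at the origin over $\rho=\kappa\delta_m\in[T,2T]$. It uses $\int_T^{2T}|\sin(\rho|\bm n|)|\,d\rho\gtrsim T$ uniformly in $|\bm n|\ge1$, together with $\sum_{\bm n\ne0}|\bm n|^{-1}\gtrsim m^2$, to conclude that \emph{some} $\rho\in[T,2T]$ gives a row sum $\gtrsim m^2/\rho$; taking $T=m^2$ finishes. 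You instead tailor the geometry to the wavenumber. Placing the points on shells with $\kappa r_m=\pi Nm+\pi/2$ forces $|\sin(\kappa r_m)|=1$, so there is no oscillation to average out. Your lower bound $\gtrsim M^2/(\kappa\delta)$ then holds for \emph{every} integer $N$, with explicit constants: with $N=M^2$ one gets $c=1/\pi$. The paper's argument shows the obstruction already occurs on the most natural saturated configuration, the cubic grid, at a suitable wavenumber in every dyadic range. Yours gives a fully deterministic construction with no pigeonhole step, at the price of a configuration engineered for resonance.

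Two small remarks. First, the step you flag as the main obstacle is elementary. A Euclidean ball of radius $d\le 2r$ centred on a sphere of radius $r$ cuts out a cap of area exactly $\pi d^2$. Maximality means the $\delta$-caps cover the shell, which gives $|S_m|\ge 4r_m^2/\delta^2\ge 4m^2$. Disjointness of the $\delta/2$-caps gives $|S_m|\le 16r_m^2/\delta^2$. Second, maximality does not force two points of a shell to be at distance exactly $\delta$, so your claim $\delta_X=\delta$ is not justified as written. You only need $\delta_X\asymp\delta$, though, and the upper bound is immediate from the origin: $\delta_X\le r_1=(1+\tfrac1{2N})\delta\le\tfrac32\delta$.
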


\begin{proof}
For \(m\ge2\), set
\[
    \delta_m:=\frac{1}{\sqrt{3}\,m},
    \qquad
    X_m:=
    \left\{
        \delta_m\bm n:
        \bm n\in\mathbb Z^3,\quad |n_i|\le m
    \right\}.
\]
Then
\[
    X_m\subset \overline{ B_1(0)},
    \qquad
    \delta_{X_m}=\delta_m,
    \qquad
    s_m:=|X_m|=(2m+1)^3.
\]
Write
\[
    \rho:=\kappa\delta_m.
\]
The absolute row sum at the origin equals
\[
    S_m(\rho)
    :=
    \sum_{\substack{\bm n\in\mathbb Z^3\\
                    |n_i|\le m,\ \bm n\ne0}}
    \frac{|\sin(\rho|\bm n|)|}{\rho|\bm n|}.
\]

Let \(T\ge8\). The elementary bound
\[
    \int_a^{a+L}|\sin u|\,du
    \ge
    \frac{2L}{\pi}-2
\]
implies, uniformly for \(|\bm n|\ge1\),
\[
    \int_T^{2T}|\sin(\rho|\bm n|)|\,d\rho
    \gtrsim T.
\]
On the other hand, there are \(\gtrsim m^3\) lattice points in the
cube satisfying \(|\bm n|\asymp m\), and hence
\[
    \sum_{\substack{\bm n\in\mathbb Z^3\\
                    |n_i|\le m,\ \bm n\ne0}}
    \frac1{|\bm n|}
    \gtrsim m^2.
\]
Since \(\rho\le2T\) on the interval of integration,
\[
    \frac1T\int_T^{2T}S_m(\rho)\,d\rho
    \gtrsim
    \frac{m^2}{T}.
\]
Consequently, some \(\rho\in[T,2T]\) satisfies
\[
    S_m(\rho)
    \gtrsim
    \frac{m^2}{\rho}
    \asymp
    \frac{s_m^{2/3}}{\rho}.
\]

Taking \(T=m^2\) gives
\[
    \rho=\kappa\delta_m\asymp m^2\asymp s_m^{2/3}
\]
and
\[
    S_m(\rho)\gtrsim1.
\]
Finally, if a uniform estimate with an exponent \(p<2/3\) held, its
right-hand side along this sequence would be
\[
    \frac{s_m^p}{\kappa\delta_m}
    \asymp
    s_m^{p-2/3}
    \longrightarrow0,
\]
contradicting the lower bound above.
\end{proof}

\end{document}